\DeclareMathAlphabet\mathbfcal{OMS}{cmsy}{b}{n}
\numberwithin{equation}{section}
\numberwithin{figure}{section}
\theoremstyle{plain}
\newtheorem{lemma}{Lemma}[section]
\newtheorem{proposition}[lemma]{Proposition}
\newtheorem{corollary}[lemma]{Corollary}
\newtheorem{theorem}[lemma]{Theorem}
\theoremstyle{definition}
\newtheorem{definition}[lemma]{Definition}
\newtheorem{remark}[lemma]{Remark}
\newtheorem{example}[lemma]{Example}
\newcommand{\R}{\mathbb{R}}
\newcommand{\Z}{\mathbb{Z}}
\newcommand{\N}{\mathbb{N}}
\newcommand{\Q}{\mathbb{Q}}
\DeclareSymbolFont{bbold}{U}{bbold}{m}{n}
\DeclareSymbolFontAlphabet{\mathbbold}{bbold}
\newcommand{\ind}{\mathbbold{1}}
\renewcommand{\P}{\mathbb{P}}
\newcommand{\E}{\mathbb{E}}
\newcommand{\F}{\mathcal{F}}
\newcommand{\supp}{\mathrm{supp}}
\newcommand{\todo}[1]{\textcolor{red}{(TODO: #1)}}
\renewcommand{\mathbf}{\boldsymbol}
\renewcommand{\circle}{\mathbb{T}}
\newcommand{\chain}{\mathcal{C}}
\newcommand{\vchain}{\mathbfcal{C}}
\newcommand{\perm}{\mathcal{S}}
\newcommand{\vperm}{\mathbfcal{S}}
\newcommand{\vinitstate}{\mathbfcal{I}}
\newcommand{\vdist}{\mathbfcal{D}}
\newcommand{\transmat}{\mathcal{K}}
\newcommand{\vtransmat}{\mathbfcal{K}}
\newcommand{\uniform}{\text{U}}
\newcommand{\vuniform}{\textbf{U}}
\newcommand{\ex}[1]{\mathrm{ex}(#1)}
\newcommand{\identity}{\textbf{I}}
\newcommand{\birkhoff}{\mathcal{B}}
\newcommand{\vbirkhoff}{\mathbfcal{B}}
\begin{document}
	
\title{Virtual Markov Chains}

\author{Steven N. Evans}
\address{Department of Statistics\\
	University  of California\\ 
	367 Evans Hall \#3860\\
	Berkeley, CA 94720-3860 \\
	U.S.A.}
\email{evans@stat.berkeley.edu}

\author{Adam Q. Jaffe}
\address{Department of Statistics\\
	University  of California\\ 
	367 Evans Hall \#3860\\
	Berkeley, CA 94720-3860 \\
	U.S.A.}
\email{aqjaffe@berkeley.edu}
\thanks{This material is based upon work for which AQJ was supported by the National Science Foundation Graduate Research Fellowship under Grant No. DGE 1752814.}

\subjclass[2010]{Primary: 60J10; Secondary: 46A55, 46M40}

\keywords{projective limit, inverse limit, virtual permutation, continuous-time Markov process, balayage, compact convex space of measures, extreme points}

\date{\today}

\begin{abstract}
We introduce the space of virtual Markov chains (VMCs) as a projective limit of the spaces of all finite state space Markov chains (MCs), in the same way that the space of virtual permutations is the projective limit of the spaces of all permutations of finite sets.
We introduce the notions of virtual initial distribution (VID) and a virtual transition matrix (VTM), and we show that the law of any VMC is uniquely characterized by a pair of a VID and VTM which have to satisfy a certain compatibility condition.
Lastly, we study various properties of compact convex sets associated to the theory of VMCs, including that the Birkhoff-von Neumann theorem fails in the virtual setting.
\end{abstract}

\maketitle


\section{Introduction}\label{sec:intro}

For each $N\in\N:=\{1,2,\ldots\}$, write $\perm_N$ for the group of all permutations of the finite set $\{1,2,\ldots, N\}$.
A \textit{virtual permutation} is a sequence $\boldsymbol{\sigma} = \{\sigma_N\}_{N\in\N}\in\prod_{N\in\N}\perm_N$ with the extra projectivity condition that for each $N\in\N$ the cycle structure of $\sigma_N$ is exactly the cycle structure of $\sigma_{N+1}$ but with the element $N+1$ removed.
The space of virtual permutations $\vperm$ was introduced in the landmark paper \cite{VirtualPermutations} as a method for studying the representation theory of the infinite symmetric group $\perm_{<\infty}:=\bigcup_{N=1}^{\infty}\perm_N$, and subsequent research has focused both on furthering this direction \cite{NewRepTheorySymGroup,SymRepTheoryPfaffian,FourierInfiniteSymmetricGroup} and on using similar ideas to study the representation theory of other ``large'' non-abelian groups \cite{RepTheoryOrthoGrp,InfiniteUnitaryRepTheory}.

There are a few natural ways to enrich this picture by introducing an element of randomness.
To motivate this, we return to the case of permutations of a finite set, that is, $\perm_N$ for a fixed $N\in\N$.
One approach is to study a family of probability measures on $\perm_N$, and then to ask any number of questions about the resulting distributions, typically in terms of the asymptotics of various statistics as $N\to\infty$.
This study of \textit{random permutations} is an extensive area of active research in probability, combinatorics, and mathematical physics,
so we will not attempt the futile task of trying to summarize this research in just a few lines.
In the case of virtual permutations, the natural generalization of this program is to define a family of probability measures directly on $\vperm$ and to ask analogous questions about the resulting distributions; this line of research has been very fruitful, and there are a number of interesting probabilistic questions that have been proposed and answered \cite{UnitaryVirtualExtension,SphericalVirtualPermutations,VirtualPermStickBreaking,Tsilevich1}.

There is another simple way to introduce an aspect of randomness to this picture.
To see what this is, we return again to the case of $\perm_N$ for a fixed $N\in\N$; the approach is to view each fixed element of $\perm_N$ as a deterministic transition rule for some dynamics on $\{1,2,\ldots, N\}$, and to then expand our scope to include probabilistic transition rules.
This study of random dynamics is, of course, nothing other than the study of Markov chains (MCs) on finite state spaces, which has become its own subfield of modern probability theory.
In the case of virtual permutations, the natural generalization of this program is to define a family of probabilistic transition rules on $\{1,2,\ldots ,N\}$ simultaneously for all $N\in\N$ in such a way that the extra projectivity condition is, in a suitable sense, always maintained.
In this paper, we the take this second perspective as a starting point and hence initiate the study of the so-called \textit{virtual Markov chains (VMCs)}.

Roughly speaking, a VMC on some probability space is a sequence $\mathbf{X} = \{X_N\}_{N\in\N}$ of MCs with the extra condition that almost surely for each $N\in\N$ the sample path $X_N$ is exactly the sample path of $X_{N+1}$ but with all instances of the element $N+1$ removed.
In particular, a VMC is nothing more than a certain (rather strong) notion of coupling for a sequence of MCs onto the same underlying probability space.
We will later give several classes of examples (Subsection~\ref{subsec:Ex-1}) which show that VMCs are naturally encoded in the enumeration of the hitting locations of various continuous-time Markov processes.

Subsequently, much of the paper is dedicated to generalizing existing results for MCs to the setting of VMCs, with an emphasis on the elements which have behavior that is different from or more interesting than the analogous elements of the classical case.
For example, one of our main results (Theorem~\ref{thm:VMC-VID-VTM-bijection}), which generalizes the classical result stating that the law of a MC is uniquely described by a pair of an initial distribution (ID) and a transition matrix (TM), shows that the law of a VMC is uniquely characterized by a pair of a \textit{virtual initial distribution (VID)} and \textit{virtual transition matrix (VTM)}, but that these two data must satisfy an additional \textit{compatibility} condition.

Another collection of results is related to various compact convex spaces associated to the study of VMCs.
In the case of MCs, the TM is usually regarded as containing ``more interesting'' information than the ID; this intuition extends to the case of VMCs as well, hence we later focus our attention on VTMs and we let VIDs play a secondary role.
From this perspective it makes sense to fix a VTM and to consider the space of all VIDs which are compatible with it; it turns out (Subsection~\ref{subsec:compatibility-revisted}) that this is a compact convex space of measures whose extreme points have many nice properties that we explore.
In particular, this study of compatibility is equivalent to characterizing the solution set of an infinite system of balayage inverse problems, which generalize the one-step balayage inverse problems originally studied in \cite{Balayage1,Balayage2}.
We also  define the notions of equilibrium distribution and stationary distribution, and show (Subsection~\ref{subsec:equi-stat}) that they are distinct, unlike in the case of classical MCs where they coincide.

Finally, we give (Subsection~\ref{subsec:Birkhoff-polytope}) a collection of results which connects the theory of VMCs back to theory of virtual permutations from whence it came.
We know in the classical setting that the space of doubly-stochastic TMs, called the \textit{Birkhoff polytope}, coincides with (closed) convex hull of the permutation matrices; in the virtual setting, we show that the space of doubly-stochastic VTMs, which we analogously call the \textit{virtual Birkhoff polytope} $\vbirkhoff$, is not even convex.
However, not all hope is lost in understading the convexity structure of $\vbirkhoff$:
We show that it exhibits an ``all-or-nothing-convexity'' property, namely, that for any two points in $\vbirkhoff$, the relative interior of the line segment joining them is either contained in or disjoint from $\vbirkhoff$.
Moreover, we show that there is a unique point in $\vbirkhoff$ with the property that the line segment joining it to any other point of $\vbirkhoff$ is contained in $\vbirkhoff$; this point is none other than the VTM of the virtual permutation corresponding to the identity; in Figure~\ref{fig:virtual-Birkhoff-space} we give a cartoon depiction of the virtual Birkhoff polytope.

\begin{figure}
	\begin{tikzpicture}
	
	\definecolor{BirkhoffColorExt}{HTML}{0235db}
	\definecolor{BirkhoffColorInt}{HTML}{8fa9ff}
	\definecolor{PermsColor}{HTML}{db0214}
	
	
	\filldraw[fill=BirkhoffColorInt,draw=BirkhoffColorExt,ultra thick] (0,0) -- (2,0) -- (1.5,0.5) -- (0,0);
	\filldraw[fill=BirkhoffColorInt,draw=BirkhoffColorExt,ultra thick] (0,0) -- (2.4,1);
	\filldraw[fill=BirkhoffColorInt,draw=BirkhoffColorExt,ultra thick] (0,0) -- (2,1);
	\filldraw[fill=BirkhoffColorInt,draw=BirkhoffColorExt,ultra thick] (0,0) -- (1.8,1.3);
	\filldraw[fill=BirkhoffColorInt,draw=BirkhoffColorExt,ultra thick] (0,0) -- (0.8,1.2) -- (2,2) -- (0,0);
	\filldraw[fill=BirkhoffColorInt,draw=BirkhoffColorExt,ultra thick] (0,0) -- (0.8,1.8);
	\filldraw[fill=BirkhoffColorInt,draw=BirkhoffColorExt,ultra thick] (0,0) -- (0.3,1.4) -- (-0.3,1.4) -- (0,0);
	\filldraw[fill=BirkhoffColorInt,draw=BirkhoffColorExt,ultra thick] (0,0) -- (-0.7,1.8);
	\filldraw[fill=BirkhoffColorInt,draw=BirkhoffColorExt,ultra thick] (0,0) -- (-0.4,0.6);
	\filldraw[fill=BirkhoffColorInt,draw=BirkhoffColorExt,ultra thick] (0,0) -- (-0.7,0.6);
	\filldraw[fill=BirkhoffColorInt,draw=BirkhoffColorExt,ultra thick] (0,0) -- (-2.1,0.9);
	\filldraw[fill=BirkhoffColorInt,draw=BirkhoffColorExt,ultra thick] (0,0) -- (-0.9,0.2) -- (-1.8,-0.2) -- (-0.9,-0.4) -- (0,0);
	\filldraw[fill=BirkhoffColorInt,draw=BirkhoffColorExt,ultra thick] (0,0) -- (-2,-1.2) -- (-0.2,-0.5) -- (0,0);
	\filldraw[fill=BirkhoffColorInt,draw=BirkhoffColorExt,ultra thick] (0,0) -- (-0.2,-1.5);
	\filldraw[fill=BirkhoffColorInt,draw=BirkhoffColorExt,ultra thick] (0,0) -- (0.1,-1.8) -- (0.4,-1.75) -- (0,0);
	\filldraw[fill=BirkhoffColorInt,draw=BirkhoffColorExt,ultra thick] (0,0) -- (0.8,-1.75);
	\filldraw[fill=BirkhoffColorInt,draw=BirkhoffColorExt,ultra thick] (0,0) -- (0.8,-1.2);
	\filldraw[fill=BirkhoffColorInt,draw=BirkhoffColorExt,ultra thick] (0,0) -- (1,-1.2);
	\filldraw[fill=BirkhoffColorInt,draw=BirkhoffColorExt,ultra thick] (0,0) -- (0.7,-0.1);
	\filldraw[fill=BirkhoffColorInt,draw=BirkhoffColorExt,ultra thick] (0,0) -- (2.1,-0.8);
	\filldraw[fill=BirkhoffColorInt,draw=BirkhoffColorExt,ultra thick] (0,0) -- (1,-0.8);
	
	\draw[fill,color=red] (2,0) circle [radius=0.05] node[left]{};
	\draw[fill,color=red] (1.5,0.5) circle [radius=0.05] node[left]{};
	\draw[fill,color=red] (2.4,1) circle [radius=0.05] node[left]{};
	\draw[fill,color=red] (2,1) circle [radius=0.05] node[left]{};
	\draw[fill,color=red] (1.8,1.3) circle [radius=0.05] node[left]{};
	\draw[fill,color=red] (0.8,1.2) circle [radius=0.05] node[left]{};
	\draw[fill,color=red] (2,2) circle [radius=0.05] node[left]{};
	\draw[fill,color=red] (0.8,1.8) circle [radius=0.05] node[left]{};
	\draw[fill,color=red] (0.3,1.4) circle [radius=0.05] node[left]{};
	\draw[fill,color=red] (-0.3,1.4) circle [radius=0.05] node[left]{};
	\draw[fill,color=red] (-0.7,1.8) circle [radius=0.05] node[left]{};
	\draw[fill,color=red] (-0.4,0.6) circle [radius=0.05] node[left]{};
	\draw[fill,color=red] (-0.7,0.6) circle [radius=0.05] node[left]{};
	\draw[fill,color=red] (-2.1,0.9) circle [radius=0.05] node[left]{};
	\draw[fill,color=red] (-0.9,0.2) circle [radius=0.05] node[left]{};
	\draw[fill,color=red] (-1.8,-0.2) circle [radius=0.05] node[left]{};
	\draw[fill,color=red] (-0.9,-0.4) circle [radius=0.05] node[left]{};
	\draw[fill,color=red] (-2,-1.2) circle [radius=0.05] node[left]{};
	\draw[fill,color=red] (-0.2,-0.5) circle [radius=0.05] node[left]{};
	\draw[fill,color=red] (-0.2,-1.5) circle [radius=0.05] node[left]{};
	\draw[fill,color=red] (0.1,-1.8) circle [radius=0.05] node[left]{};
	\draw[fill,color=red] (0.4,-1.75) circle [radius=0.05] node[left]{};
	\draw[fill,color=red] (0.8,-1.75) circle [radius=0.05] node[left]{};
	\draw[fill,color=red] (0.8,-1.2) circle [radius=0.05] node[left]{};
	\draw[fill,color=red] (1,-1.2) circle [radius=0.05] node[left]{};
	\draw[fill,color=red] (0.7,-0.1) circle [radius=0.05] node[left]{};
	\draw[fill,color=red] (2.1,-0.8) circle [radius=0.05] node[left]{};
	\draw[fill,color=red] (1,-0.8) circle [radius=0.05] node[left]{};
	
	\node[blue] at (1.3,2) {$\vbirkhoff$};
	\node[red] at (-1,1) {$\vperm$};
	
	\draw[fill,color=PermsColor] (0.03,0) circle [radius=0.1] node[left]{};
	\end{tikzpicture}
	\label{fig:virtual-Birkhoff-space}
	\caption{A cartoon of the virtual Birkhoff polytope $\vbirkhoff$ (blue) and the virtual permutation matrices $\vperm$ within it (red).}
\end{figure}

The remainder of this paper is structured as follows.
In the last part of this section, we outline some important notational conventions, some topological preliminaries, and some remarks about the categorical aspects of the constructions herein.
In Section~\ref{sec:basic} we give the basic definitions and properties of our theory, and we outline several classes of examples of VMCs which can be built from sufficiently regular continuous-time Markov processes.
In Section~\ref{sec:canonical-data} we state and prove the fundamental representation theorem which gives the correspondence between VMCs and compatible pairs of VIDs and VTMs.
In Section~\ref{sec:convexity}, we study a few problems of convexity in infinite-dimensional vector spaces that naturally arise in the theory we have developed.
Many examples are given in every section.

\subsection*{Notation}

We use calligraphic characters to denote spaces of objects, and we use boldface characters to denote virtual objects; in \LaTeX, these correspond to the \texttt{mathcal} and \texttt{mathbf} environments, respectively.
Correspondingly we use calligraphic boldface characters to denote spaces of virtual objects.
For example, $\perm$ denotes the space of all permutations of $\N$, a generic element of which is denoted $\sigma$; likewise, $\vperm$ denotes the space of all virtual permutations, a generic element of which is denoted $\boldsymbol{\sigma}$.

We also overload notation and use $\{P_N\}_{N\in\N}$ to denote several different families of projections operations and $\iota$ to denote several different natural injections.
This overloading should cause no confusion since the argument always disambiguates which map is which.

\subsection*{Topology}
Unless otherwise stated, all subsets of topological spaces are endowed with the relative topology, all cartesian products of topological spaces are endowed with the product topology, and all spaces of probability measures on a topological space are endowed with the topology of weak convergence.

\subsection*{Category}
While we will not make the categorical aspects of our work rigorous in this paper, we make a few remarks for the interested reader.

In the case of \cite{VirtualPermutations}, one works in the category of topological groups and considers the sequence $\{\perm_N\}_{N\in\N}$ of permutation groups.
Then, the infinite symmetric group $S_{<\infty} = \bigcup_{N\in\N}$ is an inductive limit of this sequence when endowed with the natural collection of injective homomorphisms $\iota_{NM}:\perm_N\to \perm_M$ which, for each $N,M\in\N$ with $N\le M$, send each permutation on $\{1,2,\ldots N\}$ to the unique permutation on $\{1,2,\ldots M\}$ which has the same cycle structure but with each element of $\{N+1,N+2,\ldots M-1,M\}$ placed into its own cycle.
Dually, the space of virtual permutations $\vperm$ is a projective limit of this sequence when endowed with the natural collection of surjective homomorphisms $P_{NM}:\perm_M\to \perm_N$ which, for each $N,M\in\N$ with $N\le M$, send each permutation on $\{1,2,\ldots M\}$ to the permutation on $\{1,2,\ldots N\}$ which has the same cycle structure but with each element of $\{N+1,N+2,\ldots M-1,M\}$ removed and with the gaps ``stitched up''.

In the present paper, one works in a certain ``category of Markov chains'', which, to the best of our knowledge, has not been studied.
Roughly speaking, the objects in this category are the Markov chains defined on a common probability space $(\Omega,\F,\P)$, and the morphisms in this category are sample-path transformations that preserve the Markov property.
For each $N\in\N$, write $\mathcal{M}_N$ for the collection of all Markov chains on $\{1,2,\ldots, N\}$ defined on some fixed $(\Omega,\F,\P)$, and consider the sequence $\{\mathcal{M}_N\}_{N\in\N}$.
Then, one can form an inductive limit of this sequence when it is endowed with the natural collection of injective morphisms $\iota_{NM}:\mathcal{M}_{N}\to \mathcal{M}_M$ which, for each $N,M\in\N$ with $N\le M$, send a sample path on $\{1,2,\ldots N\}$ to the same sample path but viewed as a path on $\{1,2,\ldots M\}$;
the resulting objects is the collection all the finite state space Markov chains on $(\Omega,\F,\P)$.
Dually, one can form a projective limit when endowing this sequence with the natural collection of surjective morphisms $P_{NM}:\perm_M\to \perm_N$ which, for each $N,M\in\N$ with $N\le M$, send each sample path on $\{1,2,\ldots M\}$ to the sample path on $\{1,2,\ldots N\}$ which results from removing all instances of $\{N+1,N+2,\ldots M-1,M\}$ and with the resulting gaps ``stitched up''.
(Note that there is some care to be taken in the case that the sample path eventually leaves $\{1,2,\ldots,N\}$ and never returns.)
The resulting object is the collection of all virtual Markov chains.

\section{Basic Theory}\label{sec:basic}

In this section we deveop the basic theory of the main objects of interest in the paper.
In Subsection~\ref{subsec:virtual-paths} we define the notion of \textit{virtual path space}, and in Subsection~\ref{subsec:VMC} we define \textit{virtual Markov chains (VMCs)} as nothing other than probability measures on virtual path space with some nice properties.
Then in Subsection~\ref{subsec:Ex-1} we build various examples of VMCs, mostly from nice continuous-time Markov processes, and we explore some different properties.

\subsection{Virtual Path Space}\label{subsec:virtual-paths}

Set $\N = \{1,2,\ldots\}$ and $\N_0 = \N\cup\{0\}$.
For $a,b\in \N_0$, write $\llbracket a,b\rrbracket = \{a,a+1,\ldots, b-1,b\}$, which may be empty.
\begin{definition}
The set
\begin{equation*}
\chain:=\left\{x\in \N_0^{\N_0}: \begin{matrix}
\text{ if } x(i) = 0 \text{ for some } i\in\N_0, \\
\text{then } x(j) = 0 \text{ for } j\in \llbracket i,\infty \llbracket
\end{matrix}\right\}
\end{equation*}
is called the space of \textit{paths}.
\end{definition}

In the above, the elements of $\N$ serve as possible states of paths, and 0 serves as a distinguished state, called the \textit{cemetery}, since any chain which visits 0 gets trapped at 0 forever after.
(Customary symbols for a cemetery state are $\partial,\dagger$, or $\Delta$, but we choose to use 0 since it fits in well with the ordering on $\N_0$.)
For $N\in\N$, we also define an analogous space of paths on the state space $\llbracket 0,N\rrbracket$, via
\begin{equation*}
\chain_N :=\left\{x\in \llbracket 0,N\rrbracket^{\N_0}: \begin{matrix}
\text{ if } x(i) = 0 \text{ for some } i\in\N_0, \\
\text{then } x(j) = 0 \text{ for } j\in \llbracket i,\infty \llbracket
\end{matrix}\right\}.
\end{equation*}
Observe in particular that for $M,N\in\N$ with $M\ge N$ we have $\chain_N\subseteq \chain_M \subseteq \chain$.
We endow $\chain$ with the relative topology induced by the product topology on $\N_0^{\N_0}$, which is metrizable; thus, compactness is equivalent to sequential compactness, continuity is equivalent to sequential continuity, etc.

\begin{lemma}
The space $\chain$ is a Polish space, and the space $\chain_{N}$ is a compact Polish space for each $N\in\N$.
\end{lemma}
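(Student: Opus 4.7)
The plan is to identify $\chain$ (respectively $\chain_N$) as a closed subset of the ambient product space $\N_0^{\N_0}$ (respectively $\llbracket 0,N\rrbracket^{\N_0}$) and then invoke standard facts: closed subsets of Polish spaces are Polish, and closed subsets of compact Hausdorff spaces are compact.

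First I would verify that the ambient spaces have the required properties. Equip $\N_0$ with the discrete topology; since $\N_0$ is countable and discrete, it is a Polish space, so the countable product $\N_0^{\N_0}$ is Polish. In the case of $\llbracket 0,N\rrbracket$, the discrete topology makes it a finite (hence compact) metric space, so by Tychonoff's theorem together with metrizability of countable products of metric spaces, $\llbracket 0,N\rrbracket^{\N_0}$ is a compact Polish space.

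Next I would show that the cemetery-absorption condition defines a closed set. The key observation is that the condition ``if $x(i)=0$ for some $i$, then $x(j)=0$ for all $j\in\llbracket i,\infty\llbracket$'' is logically equivalent to
\begin{equation*}
\forall i,j\in\N_0 \text{ with } i\le j:\quad x(i)\ne 0 \text{ or } x(j)=0.
\end{equation*}
For each fixed pair $(i,j)$ with $i\le j$, the set $A_{ij}:=\{x : x(i)\ne 0\}\cup\{x : x(j)=0\}$ depends only on the two coordinates $i,j$, so it is clopen in the product topology (on both $\N_0^{\N_0}$ and $\llbracket 0,N\rrbracket^{\N_0}$). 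Hence $\chain=\bigcap_{i\le j} A_{ij}$ is closed in $\N_0^{\N_0}$, and similarly $\chain_N$ is closed in $\llbracket 0,N\rrbracket^{\N_0}$.

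Finally I would conclude using the inheritance properties: a closed subspace of a Polish space is Polish, so $\chain$ is Polish; and a closed subspace of a compact Polish space is itself compact Polish, so $\chain_N$ is a compact Polish space. There is no real obstacle here; the only mild subtlety is writing the absorption axiom as an intersection of finite-coordinate conditions so that closedness is manifest.
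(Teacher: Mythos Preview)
Your proof is correct and follows essentially the same approach as the paper: both arguments exhibit $\chain$ (respectively $\chain_N$) as a countable intersection of clopen cylinder sets in the ambient product and then invoke the standard inheritance facts. Your decomposition $\bigcap_{i\le j}A_{ij}$ is just a distributed form of the paper's $\bigcap_i\bigl(\{x(i)\neq0\}\cup\bigcap_{j\ge i}\{x(j)=0\}\bigr)$, so the two are logically identical.
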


\begin{proof}
Since $\N_0^{\N_0}$ is Polish, the first claim follows if we can show that $\chain$ is closed in $\N_0^{\N_0}$.
This is immediate by writing
\begin{equation*}
\chain = \bigcap_{i\in\N_0}\left(\left\{x\in\N_0^{\N_0}: x(i) \neq 0 \right\}\cup \bigcap_{j=i}^{\infty}\left\{x\in\N_0^{\N_0}: x(j) = 0 \right\}\right)
\end{equation*}
and noting that $\{x\in\N_0^{\N_0}: x(k) = 0 \}$ is clopen in $\N_0^{\N_0}$ for all $k\in\N_0$.

Now let $N\in\N$ be arbitrary.
Since $\llbracket 0,N\rrbracket^{\N_0}$ is a compact Polish space, the second claim follows if we can show that $\chain_{N}$ is closed in $\llbracket 0,N\rrbracket^{\N_0}$, and this follows by the same argument as above.
\end{proof}

For $x\in \chain$ and $N\in\N$, set $I_{x,N}(0) = \inf\{i\in\N_0: x(i)\in\llbracket 0,N\rrbracket \}$ and recursively $I_{x,N}(j+1) = \inf\{i\in \N_0: i> I_{x,N}(j), x(i)\in\llbracket 0,N\rrbracket \}$ for $j\in\N$; we use the standard convention that $\inf\emptyset = \infty$.
Observe that $\{I_{x,N}(j)\}_{j\in\N}$ enumerates the indices at which the path $x$ visits states in the set $\llbracket 0,N\rrbracket$.
In particular, $\{I_{x,N}(j)\}_{j\in\N}$ is non-decreasing and is possibly eventually equal to infinity.

Then, for $x\in\chain$ and $N\in\N$, we define $P_N(x)\in\chain_N$ via
\begin{equation*}
(P_N(x))(j) = \begin{cases}
x(I_{x,N}(j)), &\text{ if } I_{x,N}(j) < \infty, \\
0, &\text{ if } I_{x,N}(j) = \infty.
\end{cases}
\end{equation*}
Since $\chain_M\subseteq \chain$ for all $M\in\N$, we can also think of the map $P_N:\chain_M\to \chain_N$ for any $M,N\in\N$.
Intuitively, $x\mapsto P_N(x)$ is the operation which removes from $x$ all of its excursions that visit states higher than $N$, including a possible final excursion of infinite length; in the case that $x$ has an infinite excursion, we pad the path $P_N(x)$ with $0$s.
Also note that, for all $N\in\N$, we have the fundamental projectivity property $P_{N}\circ P_{N+1} = P_N$ on $\chain$.

\begin{lemma}\label{lem:proj-cts}
	For each $N\in\N$, the map $P_N:\chain\to\chain_N$ is continuous.
\end{lemma}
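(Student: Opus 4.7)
The plan is to verify sequential continuity, which suffices because the product topology on $\N_0^{\N_0}$ (and hence the subspace topology on $\chain$) is metrizable. So let $x_n\to x$ in $\chain$; the goal is to show $P_N(x_n)\to P_N(x)$ in $\chain_N$. Since $\N_0$ and $\llbracket 0,N\rrbracket$ carry the discrete topology, convergence in either product topology amounts to coordinate-wise eventual equality, and the task reduces to showing that for each fixed $j\in\N_0$, the equality $(P_N(x_n))(j)=(P_N(x))(j)$ holds for all $n$ sufficiently large. I will use repeatedly that for any finite time horizon $T\in\N_0$ there is some $n_0\in\N$ such that $x_n(i)=x(i)$ for all $i\in\llbracket 0,T\rrbracket$ and $n\ge n_0$.

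The argument then splits on whether $I_{x,N}(j)$ is finite. If $I:=I_{x,N}(j)<\infty$, then taking $T=I$, for $n$ large one has $x_n\equiv x$ on $\llbracket 0,I\rrbracket$. Since the recursive definition of the visit-times $I_{y,N}(k)$ for $k\le j$ depends only on the restriction of $y$ to $\llbracket 0,I\rrbracket$ (using that $I_{y,N}(k)\le I_{y,N}(j)$), this forces $I_{x_n,N}(k)=I_{x,N}(k)$ for every $k\le j$, and consequently $(P_N(x_n))(j)=x_n(I)=x(I)=(P_N(x))(j)$, which is the desired equality in this case.

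The remaining case is $I_{x,N}(j)=\infty$, for which $(P_N(x))(j)=0$ and the goal becomes $(P_N(x_n))(j)=0$ for all $n$ large. Here the cemetery condition built into $\chain$ is essential: were $x$ to visit $0$ at some finite time $i_0$, it would remain at $0$ afterwards and thus contribute infinitely many visits to $\llbracket 0,N\rrbracket$ on $\llbracket i_0,\infty\llbracket$, contradicting $I_{x,N}(j)=\infty$. Consequently $x$ has at most $j$ visits to $\llbracket 0,N\rrbracket$, all occurring in $\llbracket 1,N\rrbracket$, and $x(i)>N$ for every $i$ beyond the last such visit. The plan is to pick the initial window $\llbracket 0,T\rrbracket$ so that it contains both the last visit of $x$ and a large buffer, so that coordinate-wise agreement forces $x_n$ to reproduce the finite visit structure of $x$ within the window and to have no additional visit inside it; from this, one aims to deduce $I_{x_n,N}(j)>T$, and then to let $T\to\infty$.

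The main obstacle is the tail control required in the second case, since coordinate-wise convergence by itself does not restrict $x_n$ outside any fixed finite window. I expect the cleanest route is to combine the cemetery condition with the projectivity relation $P_N\circ P_{N+1}=P_N$ (already noted above) to lift the tail information about $x$ to $x_n$ for $n$ large, ensuring that the indices $I_{x_n,N}(j)$ escape to infinity along the sequence and thereby forcing $(P_N(x_n))(j)=0$ eventually.
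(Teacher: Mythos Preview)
Your treatment of the case $I_{x,N}(j)<\infty$ is correct. The gap you flag in the case $I_{x,N}(j)=\infty$, however, cannot be closed: the lemma as stated is false. Take $N=1$, let $x\in\chain$ be the constant path $x(i)=2$, and for each $n\in\N$ let $x_n\in\chain$ be given by $x_n(i)=2$ for $i<n$ and $x_n(i)=1$ for $i\ge n$. None of these paths visits $0$, so the cemetery condition is vacuous and all lie in $\chain$. Coordinate-wise $x_n\to x$, yet $P_1(x)$ is the constant sequence $0$ (since $x$ never enters $\llbracket 0,1\rrbracket$) while $P_1(x_n)$ is the constant sequence $1$ for every $n$. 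Hence $P_1(x_n)\not\to P_1(x)$, and $P_1$ is discontinuous at $x$. The projectivity relation you invoke cannot rescue the argument: the same sequences already lie in $\chain_2$, so even the restricted map $P_1:\chain_2\to\chain_1$ fails to be continuous.

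For comparison, the paper's own argument is also defective: it produces, for each basic cylinder $C\subseteq\chain_N$, a single cylinder $C'\subseteq\chain$ with $C'\subseteq P_N^{-1}(C)$, but never shows that $P_N^{-1}(C)$ is open, nor that a given $x$ with $P_N(x)\in C$ has a neighborhood mapping into $C$. Your instinct that tail behavior is the real obstacle was exactly right; the obstacle is genuine, and the downstream uses of this lemma (closedness of $\vchain$, continuity of $\iota$) would need to be established by other means.
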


\begin{proof}
	An arbitrary cylinder set $C\subseteq\chain_N$ is of the form
	\begin{equation*}
	C = \{x\in \chain_N: x(i_1) \in B_1,\ldots, x(i_{\ell}) \in B_{\ell} \}
	\end{equation*}
	for some $\ell\in\N_0$, distinct $i_1,\ldots, i_{\ell}\in \N_0$, and $B_1,\ldots, B_{\ell}\subseteq \llbracket 0,N\rrbracket$.
	So, simply set $I = \max\{i_1,\ldots, i_{\ell}\}$ and define
	\begin{equation*}
	C' = \left\{x\in \chain: \begin{matrix}
	x(i) \in B_i \text{ for all } i\in\{i_1,\ldots, i_{\ell}\}, \text{ and } \\
	x(i)\in \llbracket 0,N\rrbracket \text{ for all } i\in \llbracket 0,I\rrbracket\setminus\{i_1,\ldots, i_{\ell}\}
	\end{matrix}\right\}.
	\end{equation*}
	Notice that $C'$ is a cylinder set in $\chain$ and that $x\in C'$ clearly implies $P_N(x)\in C$, hence $P_N$ is continuous.
\end{proof}

\begin{lemma}\label{lem:proj-approximate-identity}
	For each $x\in\chain$, we have $P_N(x)\to x$ as $N\to\infty$.
\end{lemma}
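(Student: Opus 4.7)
The plan is to exploit the fact that convergence in the product topology on $\N_0^{\N_0}$ (which $\chain$ inherits as a subspace) is nothing other than pointwise convergence. So it suffices to show that for each fixed $i\in\N_0$, one has $(P_N(x))(i)\to x(i)$ as $N\to\infty$; in fact, I will prove the stronger statement that for each such $i$ the sequence $\{(P_N(x))(i)\}_{N\in\N}$ is eventually constant equal to $x(i)$.

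To carry this out, fix $x\in\chain$ and $i\in\N_0$, and set $N_0 := \max\{x(0),x(1),\ldots,x(i)\}$, which is a finite maximum of finitely many elements of $\N_0$. For any $N\ge N_0$, by construction $x(j)\in\llbracket 0,N\rrbracket$ for every $j\in\{0,1,\ldots,i\}$. A one-line induction using the recursive definition of $I_{x,N}$ then yields $I_{x,N}(j)=j$ for $j=0,1,\ldots,i$: the base case $I_{x,N}(0)=0$ holds because $x(0)\in\llbracket 0,N\rrbracket$, and if $I_{x,N}(j)=j$ for some $j<i$, then since $x(j+1)\in\llbracket 0,N\rrbracket$ we get $I_{x,N}(j+1)=j+1$. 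Substituting into the definition of $P_N$ then gives $(P_N(x))(i) = x(I_{x,N}(i)) = x(i)$, exactly as desired.

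The one subtlety worth flagging is the case in which $x$ is eventually absorbed at the cemetery state $0$ at some index $i_0$, after which $x(j)=0$ for all $j\ge i_0$. Since $0\in\llbracket 0,N\rrbracket$ for every $N$, however, the cemetery state is always included among the "allowed" values in the definition of $I_{x,N}$, so no indices are ever skipped due to absorption. Consequently no case analysis is required, and the argument above applies uniformly whether or not $x$ ever hits $0$. Given that this statement amounts to a direct unpacking of the definitions, I do not expect any genuine difficulty; the only thing to take care with is choosing $N$ large enough to cover all states that appear in the first $i+1$ coordinates of $x$.
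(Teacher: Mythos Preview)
Your proof is correct and follows essentially the same approach as the paper. The paper phrases the argument in terms of cylinder sets, while you use pointwise convergence directly, but both rest on the same observation: once $N\ge\max\{x(0),\ldots,x(i)\}$, the first $i+1$ indices satisfy $I_{x,N}(j)=j$, so $P_N(x)$ agrees with $x$ on the first $i+1$ coordinates.
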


\begin{proof}
	An arbitrary cylinder set $C\subseteq\chain$ is of the form
	\begin{equation*}
	C = \{x\in \chain: x(i_1) \in B_1,\ldots, x(i_{\ell}) \in B_{\ell} \}
	\end{equation*}
	for some $\ell\in\N_0$, distinct $i_1,\ldots, i_{\ell}\in \N_0$, and $B_1,\ldots, B_{\ell}\subseteq \N_0$.
	Then set $I = \max\{i_1,\ldots, i_{\ell}\}$ and $M = \max\{x(k): 0\le k \le I\}$.
	Thus, if $x\in C$, then $N\ge M$ implies that the restrictions of the infinite sequences $P_N(x)$ and $x$ to the interval of indices $\llbracket 0,I \rrbracket$ must agree, hence $P_N(x)\in C$, as needed.
\end{proof}

\begin{definition} 
The set
\begin{equation*}
\vchain := \left\{\{x_N\}_{N\in\N} \in \prod_{N\in\N}\chain_N: P_{N}(x_{N+1}) = x_N\text{ for all } N\in\N \right\}
\end{equation*}
is called the space of \textit{virtual paths}.
\end{definition}

\begin{lemma}
The space $\vchain$ is a compact Polish space.
\end{lemma}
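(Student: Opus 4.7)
The plan is to exhibit $\vchain$ as a closed subset of the countable product $\prod_{N\in\N}\chain_N$, and then invoke general facts to conclude it is compact Polish.

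First I would recall that each $\chain_N$ is a compact Polish space by the previous lemma. Since a countable product of compact Polish spaces is again compact (Tychonoff) and Polish (countable products of separable metric spaces are separable metric, and the product of complete metrics via $d(\mathbf{x},\mathbf{y}) = \sum_{N} 2^{-N}(d_N(x_N,y_N)\wedge 1)$ is complete), the space $\prod_{N\in\N}\chain_N$ is compact Polish. Then any closed subset inherits the compact Polish property, so it suffices to prove that $\vchain$ is closed in the product.

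Second, to verify closedness, I would write
\begin{equation*}
\vchain = \bigcap_{N\in\N}\left\{\mathbf{x}\in\prod_{M\in\N}\chain_M : P_N(x_{N+1}) = x_N\right\}.
\end{equation*}
For each fixed $N\in\N$, the maps $\mathbf{x}\mapsto x_N$ and $\mathbf{x}\mapsto P_N(x_{N+1})$ from the product into $\chain_N$ are continuous: the first is a coordinate projection, and the second is the composition of a coordinate projection with the map $P_N:\chain_{N+1}\to\chain_N$, which is continuous by Lemma~\ref{lem:proj-cts} (the lemma is stated for $P_N:\chain\to\chain_N$, but since $\chain_{N+1}\subseteq\chain$ carries the relative topology, the restriction is continuous). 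Since $\chain_N$ is Hausdorff (being metrizable), the equalizer of these two continuous maps is closed, and $\vchain$ is an intersection of such closed sets.

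There is no real obstacle here — the only mild subtlety is making sure the Polish structure of an infinite product is handled correctly and that Lemma~\ref{lem:proj-cts} applies to the restriction of $P_N$ to $\chain_{N+1}$, both of which are standard. Once closedness is established, compactness and the Polish property of $\vchain$ follow immediately from the corresponding properties of the ambient product space.
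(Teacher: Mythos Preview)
Your proposal is correct and follows essentially the same approach as the paper: show that the countable product $\prod_{N\in\N}\chain_N$ is compact Polish, then write $\vchain$ as an intersection of closed sets using the continuity of the projections $P_N$ from Lemma~\ref{lem:proj-cts}. The paper's proof is slightly terser but the argument is the same.
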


\begin{proof}
Since each $\chain_N$ is a compact Polish space, the countable product $\prod_{N\in\N}\chain_N$ is a compact Polish space.
Thus, it suffices to show that $\vchain$ is closed in $\prod_{N\in\N}\chain_N$.
To see this, just write
\begin{equation*}
\vchain = \bigcap_{N\in\N}\left\{\{x_N\}_{N\in\N} \in \prod_{N\in\N}\chain_N: P_{N}(x_{N+1}) = x_N\right\}
\end{equation*}
and note that each of these sets in the countable intersection is a closed subset of $\prod_{N\in\N}\chain_N$ by virtue of Lemma~\ref{lem:proj-cts}.
\end{proof}

\begin{lemma}\label{lem:C-VC-inclusion}
The map $\iota:\chain\to\vchain$ defined in the natural way via $\iota(x) = \{P_N(x)\}_{N\in\N}$ is well-defined, continuous, and injective.
\end{lemma}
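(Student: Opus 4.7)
The plan is to verify each of the three assertions by appealing directly to the three lemmas already established in this subsection, since each piece of the claim lines up with exactly one of them.

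First I would address well-definedness. The target $\vchain$ is the subset of $\prod_{N\in\N}\chain_N$ cut out by the projectivity conditions $P_N(x_{N+1})=x_N$ for every $N\in\N$. Setting $x_N := P_N(x)$, these conditions become $P_N(P_{N+1}(x)) = P_N(x)$, which is precisely the fundamental projectivity property $P_N\circ P_{N+1}=P_N$ on $\chain$ noted immediately after the definition of $P_N$. So $\iota(x)$ indeed lands in $\vchain$.

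Next I would handle continuity. Since $\vchain$ carries the subspace topology from the product $\prod_{N\in\N}\chain_N$, a map into $\vchain$ is continuous iff each coordinate map is continuous. The $N$-th coordinate of $\iota$ is just $P_N:\chain\to\chain_N$, which is continuous by Lemma~\ref{lem:proj-cts}. Hence $\iota$ is continuous.

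Finally I would show injectivity. Suppose $\iota(x)=\iota(y)$, so that $P_N(x)=P_N(y)$ for every $N\in\N$. Passing to the limit $N\to\infty$ and invoking Lemma~\ref{lem:proj-approximate-identity}, we obtain $x=\lim_{N\to\infty}P_N(x)=\lim_{N\to\infty}P_N(y)=y$, as desired.

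There is really no main obstacle here: the lemma is a purely formal consequence of the three preceding lemmas, and the only mild point to be careful about is the transition from the product topology on $\prod_{N\in\N}\chain_N$ to the subspace topology on $\vchain$, which is automatic.
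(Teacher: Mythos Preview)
Your proof is correct and follows essentially the same approach as the paper for well-definedness and continuity. For injectivity, you invoke Lemma~\ref{lem:proj-approximate-identity} to pass to the limit, whereas the paper instead argues directly (for each finite index window $\llbracket 0,I\rrbracket$, choose $N$ large enough that $P_N(x)$ and $x$ agree there, and likewise for $y$); your route is slightly more streamlined since it reuses the already-proved lemma rather than reproducing its content.
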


\begin{proof}
That this map is well-defined follows from the projectivity property of the projections $\{P_N\}_{N\in\N}$, and that it is continuous follows from Lemma~\ref{lem:proj-cts}.
To see that it is injective, suppose that $x,y\in\chain$ have $\iota(x) = \iota(y)$.
Then for each $I\in\N_0$, set $N = \max\{\max\{x(k),y(k): 0\le k \le I\} \}$.
The restrictions of $P_N(x)$ and $x$ to $\llbracket 0,I\rrbracket$ must agree, and the restrictions of $P_N(y)$ and $y$ to the indices $\llbracket 0,I\rrbracket$ must also agree.
Thus $P_N(x) = P_N(y)$ implies that the restrictions of $x$ and $y$ to $\llbracket 0,I\rrbracket$ must agree.
As $I\in\N_0$ was arbitrary, this implies $x=y$.
\end{proof}

\begin{lemma}
The space $\iota(\chain)$ is dense in $\vchain$.
\end{lemma}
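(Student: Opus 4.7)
The plan is to exhibit, for each virtual path $\mathbf{x} = \{x_N\}_{N\in\N}\in\vchain$, an explicit approximating sequence from $\iota(\chain)$, namely the sequence $\{\iota(x_M)\}_{M\in\N}$, where each $x_M\in\chain_M$ is viewed as an element of $\chain$ via the inclusion $\chain_M\subseteq\chain$. Since $\vchain$ inherits the subspace topology from the product space $\prod_{N\in\N}\chain_N$, convergence in $\vchain$ reduces to coordinate-wise convergence: I need to show that for each fixed $N\in\N$, the $N$-th coordinate $P_N(x_M)$ of $\iota(x_M)$ converges in $\chain_N$ to $x_N$ as $M\to\infty$.

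The key observation is that this convergence is not merely eventual closeness but eventual equality: I will show that $P_N(x_M) = x_N$ for every $M\ge N$, whereupon the sequence is eventually constant in each coordinate and the desired convergence is trivial. This identity follows by a short induction on $M-N$, starting from the compatibility condition $x_N = P_N(x_{N+1})$ built into the definition of $\vchain$, and using the fundamental projectivity property $P_N\circ P_{N+1} = P_N$ on $\chain$ (noted just before Lemma~\ref{lem:proj-cts}) to propagate: $x_N = P_N(x_{N+1}) = P_N(P_{N+1}(x_{N+2})) = P_N(x_{N+2})$, and so on.

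There is no real obstacle here; the only subtlety is a bookkeeping one, namely recognizing that the projection map $P_N$ appearing in the compatibility condition (an operation on $\chain_{N+1}$) and the map $P_N$ applied to $x_M\in\chain_M\subseteq\chain$ are consistent under the inclusions, so that the iterated identity genuinely collapses to $P_N(x_M)=x_N$. Once this is unpacked, the density statement follows immediately from coordinate-wise (indeed eventually constant) convergence, without needing to invoke any metrizability or separability arguments beyond those already established for $\vchain$.
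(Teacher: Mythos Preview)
Your proof is correct and follows essentially the same idea as the paper's: both arguments rest on the observation that for $\mathbf{x}=\{x_N\}_{N\in\N}\in\vchain$ and $M\ge N$ one has $P_N(x_M)=x_N$, so that $\iota(x_M)$ agrees with $\mathbf{x}$ in its first $M$ coordinates. The paper phrases this as ``$\iota(x_{N_\ell})$ lies in any prescribed nonempty basic open set'', while you phrase it as coordinate-wise (eventually constant) convergence of $\iota(x_M)\to\mathbf{x}$; these are the same content viewed through the open-set versus sequential characterizations of density.
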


\begin{proof}
Note that a basis for the topology of $\vchain$ is given by all sets of the form
\begin{equation*}
\{\{x_N\}_{N\in\N}\in \vchain: x_{N_1}\in B_{1},\ldots, x_{N_\ell}\in B_{\ell} \}
\end{equation*}
for some $\ell\in\N_0$, an increasing sequence $N_1,\ldots N_{\ell}\in\N$, and some non-empty open sets $B_{i}\subseteq \chain_{N_i}$ for each $i=1,\ldots \ell$.
For any such set $U$ which is non-empty, there is some $\{x_N\}_{N\in\N}\in U$, and it follows that the element $x_{N_{\ell}}\in\chain_{N_{\ell}}\subseteq \chain$ has $\iota(x_{N_{\ell}})\in U$.
Thus, $\iota(\chain)$ intersects every non-empty open set in $\vchain$, so the result follows.
\end{proof}

In the previous two results we showed that $\vchain$ contains a ``copy'' of $\chain$, and that this copy is dense in $\vchain$.
Since we also showed that $\vchain$ is compact, it is useful to regard $\vchain$ as a certain kind of compactification of the space $\chain$.
Our final result completes the picture by giving a simple characterization of the elements of $\vchain\setminus \iota(\chain)$.

\begin{lemma}\label{lem:VC-C-characterization}
	A virtual path $\mathbf{x} = \{x_N\}_{N\in\N}\in\vchain$ lies in $\iota(\chain)$ if and only if $\lim_{N\to\infty}x_N$ exists in $\chain$ and $\mathbf{x} = \iota(\lim_{N\to\infty}x_N)$.
\end{lemma}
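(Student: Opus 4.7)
The statement is an ``if and only if,'' so I would prove the two directions separately and expect both to be quick consequences of the lemmas already established in the subsection.

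For the ($\Leftarrow$) direction, if $\lim_{N\to\infty} x_N$ exists in $\chain$ and $\mathbf{x} = \iota(\lim_{N\to\infty} x_N)$, then $\mathbf{x}\in\iota(\chain)$ holds literally by definition, so there is nothing to prove.

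For the ($\Rightarrow$) direction, suppose $\mathbf{x}\in\iota(\chain)$. Then, by the definition of $\iota$ from Lemma~\ref{lem:C-VC-inclusion}, there exists some $y\in\chain$ with $\iota(y) = \mathbf{x}$, which unpacks to the identity $P_N(y) = x_N$ for every $N\in\N$. Invoking Lemma~\ref{lem:proj-approximate-identity} on $y$ yields
\begin{equation*}
\lim_{N\to\infty} x_N = \lim_{N\to\infty} P_N(y) = y \quad \text{in } \chain,
\end{equation*}
which shows that the limit exists. Substituting this back gives $\mathbf{x} = \iota(y) = \iota(\lim_{N\to\infty} x_N)$, as required. (The element $y$ is in fact unique, by the injectivity part of Lemma~\ref{lem:C-VC-inclusion}, although uniqueness is not strictly needed for the statement.)

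There is no real obstacle here; the proof is essentially a bookkeeping exercise that repackages Lemma~\ref{lem:proj-approximate-identity} into the ``virtual'' language. The only minor point to be careful about is that convergence $x_N \to y$ on the left-hand side takes place in $\chain$ (not in some individual $\chain_N$), which is exactly the sense in which Lemma~\ref{lem:proj-approximate-identity} provides convergence, so the two notions line up without any extra work.
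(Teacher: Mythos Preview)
Your proof is correct and follows essentially the same approach as the paper's: the backward direction is trivial, and the forward direction unpacks $\mathbf{x}=\iota(y)$ to $x_N=P_N(y)$ and then invokes Lemma~\ref{lem:proj-approximate-identity} to conclude $x_N\to y$. The paper's version is simply terser, omitting the explicit naming of $y$ and the remark on injectivity.
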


\begin{proof}
	If $\lim_{N\to\infty}x_N$ exists and $\mathbf{x} = \iota(\lim_{N\to\infty}x_N)$, then $\mathbf{x}\in\iota(\chain)$ trivially.
	Conversely, if we have $\mathbf{x} = \iota(x)$ for some $x\in\chain$, then the result follows from Lemma~\ref{lem:proj-approximate-identity}.
\end{proof}

The preceding result states that $\lim_{N\to\infty}x_N$ is the only candidate for the preimage of $\mathbf{x} = \{x_N\}_{N\in\N}$ by $\iota$.
Thus, a virtual path fails to be identified with an actual path whenever either this candidate $x$ does not exist, or it exists but it does not satisfy the identity $\mathbf{x}= \iota(x)$.

Now, let's see some simple examples of virtual paths.
Our first three examples illustrate the ways in the hypotheses of Lemma~\ref{lem:VC-C-characterization} come into effect, and hence highlight the relationship between $\vchain$ and $\chain$.

\begin{example}\label{ex:virtual-path-1}
	For each $N\in\N$, define the path $x_N\in\chain_N$ via
	\begin{equation*}
	x_N(i) = \begin{cases}
	i + 1, & \text{ if } i < N, \\
	0, & \text{ if } i\ge N, \\
	\end{cases}
	\end{equation*}
	for $i\in\N_0$; see Figure~\ref{fig:virtual-paths} (top) for a depiction.	
	Then, $\mathbf{x} = \{x_N\}_{N\in\N}$ is a virtual path.
	Also define the path $x\in \chain$ via $x(i) = i+1$ for $i\in\N_0$, and note that we have $\mathbf{x} = \iota(x)$.
\end{example}

\begin{example}\label{ex:virtual-path-2}
	For each $N\in\N$, define the path $x_N\in\chain_N$ via
	\begin{equation*}
	x_N(i) = \begin{cases}
	i + 1, & \text{ if } i < N, \\
	2N - i - 1, & \text{ if } N \le i < 2N - 1, \\
	1, & \text{ if } i \ge 2N - 1, \\
	\end{cases}
	\end{equation*}
	for $i\in\N_0$, and see Figure~\ref{fig:virtual-paths} (middle) for an illustration.
	Then, $\mathbf{x} = \{x_N\}_{N\in\N}$ is a virtual path.
	Intuitively, $\mathbf{x}$ is the virtual path which ``goes to infinity and back'' and then stays forever at the state 1.
	Observe that we have $\lim_{N\to\infty}x_N = x$, where $x\in \chain$ is defined in the previous example, but that $\iota(x) \neq \mathbf{x}$.
	Thus, by Lemma~\ref{lem:VC-C-characterization}, there is no path which corresponds to this virtual path.
\end{example}

\begin{example}\label{ex:virtual-path-3}
	For each $N\in\N$, define the path $x_N \in \chain_N$ via
	\begin{equation*}
	x_N(i) = \begin{cases}
	N - i, & \text{ if } i < N, \\
	1, & \text{ if } i \ge N, \\
	\end{cases}
	\end{equation*}
	for $i\in\N_0$, and see Figure~\ref{fig:virtual-paths} (middle).
	Then, $\mathbf{x} = \{x_N\}_{N\in\N}$ is a virtual path.
	Intuitively, $\mathbf{x}$ is the virtual path which ``comes down from infinity'' and then stays forever at the state 1.
	Observe that $x_N(0) = N$, so $\lim_{N\to\infty}x_N$ cannot exist in $\chain$.
	Thus, by Lemma~\ref{lem:VC-C-characterization}, we see that there is no path which corresponds to this virtual path.
\end{example}

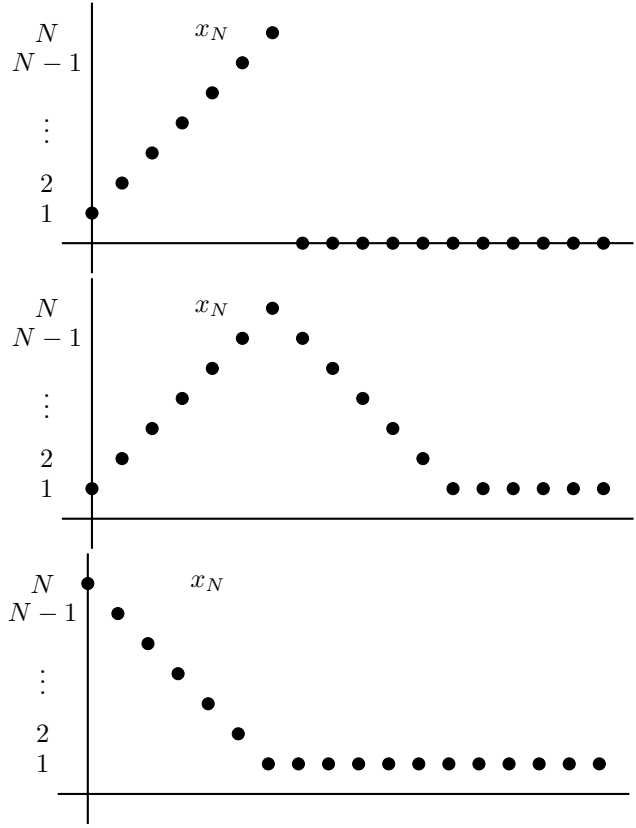
\begin{figure}
	\begin{tikzpicture}[scale=0.4]
	\draw[-, thick] (0, -1) to (0,8);
	\draw[-, thick] (-1,0) to (18,0);
	
	\draw [fill] (0,1) circle [radius=.2] node[left]{};
	\draw [fill] (1,2) circle [radius=.2] node[left]{};
	\draw [fill] (2,3) circle [radius=.2] node[left]{};
	\draw [fill] (3,4) circle [radius=.2] node[left]{};
	\draw [fill] (4,5) circle [radius=.2] node[left]{};
	\draw [fill] (5,6) circle [radius=.2] node[left]{};
	\draw [fill] (6,7) circle [radius=.2] node[left]{};
	\draw [fill] (7,0) circle [radius=.2] node[left]{};
	\draw [fill] (8,0) circle [radius=.2] node[left]{};
	\draw [fill] (9,0) circle [radius=.2] node[left]{};
	\draw [fill] (10,0) circle [radius=.2] node[left]{};
	\draw [fill] (11,0) circle [radius=.2] node[left]{};
	\draw [fill] (12,0) circle [radius=.2] node[left]{};
	\draw [fill] (13,0) circle [radius=.2] node[left]{};
	\draw [fill] (14,0) circle [radius=.2] node[left]{};
	\draw [fill] (15,0) circle [radius=.2] node[left]{};
	\draw [fill] (16,0) circle [radius=.2] node[left]{};
	\draw [fill] (17,0) circle [radius=.2] node[left]{};
	
	\draw (-1.5,1) node[]{$1$};
	\draw (-1.5,2) node[]{$2$};
	\draw (-1.5,4) node[]{$\vdots$};
	\draw (-1.5,6) node[]{$N-1$};
	\draw (-1.5,7) node[]{$N$};
	
	\draw (4,7) node[]{$x_N$};
	\end{tikzpicture}
	\begin{tikzpicture}[scale=0.4]
	\draw[-, thick] (0, -1) to (0,8);
	\draw[-, thick] (-1,0) to (18,0);
	
	\draw [fill] (0,1) circle [radius=.2] node[left]{};
	\draw [fill] (1,2) circle [radius=.2] node[left]{};
	\draw [fill] (2,3) circle [radius=.2] node[left]{};
	\draw [fill] (3,4) circle [radius=.2] node[left]{};
	\draw [fill] (4,5) circle [radius=.2] node[left]{};
	\draw [fill] (5,6) circle [radius=.2] node[left]{};
	\draw [fill] (6,7) circle [radius=.2] node[left]{};
	\draw [fill] (7,6) circle [radius=.2] node[left]{};
	\draw [fill] (8,5) circle [radius=.2] node[left]{};
	\draw [fill] (9,4) circle [radius=.2] node[left]{};
	\draw [fill] (10,3) circle [radius=.2] node[left]{};
	\draw [fill] (11,2) circle [radius=.2] node[left]{};
	\draw [fill] (12,1) circle [radius=.2] node[left]{};
	\draw [fill] (13,1) circle [radius=.2] node[left]{};
	\draw [fill] (14,1) circle [radius=.2] node[left]{};
	\draw [fill] (15,1) circle [radius=.2] node[left]{};
	\draw [fill] (16,1) circle [radius=.2] node[left]{};
	\draw [fill] (17,1) circle [radius=.2] node[left]{};
	
	\draw (-1.5,1) node[]{$1$};
	\draw (-1.5,2) node[]{$2$};
	\draw (-1.5,4) node[]{$\vdots$};
	\draw (-1.5,6) node[]{$N-1$};
	\draw (-1.5,7) node[]{$N$};
	
	\draw (4,7) node[]{$x_N$};
	\end{tikzpicture}
	\begin{tikzpicture}[scale=0.4]
	\draw[-, thick] (0, -1) to (0,8);
	\draw[-, thick] (-1,0) to (18,0);
	
	\draw [fill] (0,7) circle [radius=.2] node[left]{};
	\draw [fill] (1,6) circle [radius=.2] node[left]{};
	\draw [fill] (2,5) circle [radius=.2] node[left]{};
	\draw [fill] (3,4) circle [radius=.2] node[left]{};
	
	\draw [fill] (4,3) circle [radius=.2] node[left]{};
	\draw [fill] (5,2) circle [radius=.2] node[left]{};
	\draw [fill] (6,1) circle [radius=.2] node[left]{};
	\draw [fill] (7,1) circle [radius=.2] node[left]{};
	\draw [fill] (8,1) circle [radius=.2] node[left]{};
	\draw [fill] (9,1) circle [radius=.2] node[left]{};
	\draw [fill] (10,1) circle [radius=.2] node[left]{};
	\draw [fill] (11,1) circle [radius=.2] node[left]{};
	\draw [fill] (12,1) circle [radius=.2] node[left]{};
	\draw [fill] (13,1) circle [radius=.2] node[left]{};
	\draw [fill] (14,1) circle [radius=.2] node[left]{};
	\draw [fill] (15,1) circle [radius=.2] node[left]{};
	\draw [fill] (16,1) circle [radius=.2] node[left]{};
	\draw [fill] (17,1) circle [radius=.2] node[left]{};
	
	\draw (-1.5,1) node[]{$1$};
	\draw (-1.5,2) node[]{$2$};
	\draw (-1.5,4) node[]{$\vdots$};
	\draw (-1.5,6) node[]{$N-1$};
	\draw (-1.5,7) node[]{$N$};
	
	\draw (4,7) node[]{$x_N$};
	\end{tikzpicture}
	\label{fig:virtual-paths}
	\caption{The virtual paths of Example~\ref{ex:virtual-path-1} (top), Example~\ref{ex:virtual-path-2} (middle), and Example~\ref{ex:virtual-path-3} (bottom).}
\end{figure}

For our last class of examples, we connect the ideas herein to the concept of virtual permutations, as constructed in \cite{VirtualPermutations}.
Recall that a \textit{virtual permutation} is a collection $\boldsymbol{\sigma} = \{\sigma_N\}_{N\in\N}\in\vperm$ where $\sigma_N$ is a permutation of $\llbracket 1,N \rrbracket$ for each $N\in\N$, such that the cycle structure of $\sigma_{N}$ is given by removing the element $N+1$ from the cycle structure of $\sigma_N$.

\begin{example}\label{ex:virtual-permutations}
Let $\boldsymbol{\sigma} =\{\sigma_{N}\}_{N\in\N}$ be a virtual permutation, and let $a\in\N$ be arbitrary.
Then define $\mathbf{x}_{\boldsymbol{\sigma},a} := \{x_N\}_{N\in\N}$ in $\vchain$ as follows:
For $N\ge a$, set $x_{N}(0) := a$ and recursively $x_{N}(j+1) := \sigma_N(x_N(j))$ for $j\in\N_0$.
For $N<a$, recursively set $x_{N} := P_{N}(x_{N+1})$.
Then $\mathbf{x}_{\boldsymbol{\sigma},a}$ represents the ``cycle'' of $\boldsymbol{\sigma}$ containing $a$.
Observe by Lemma~\ref{lem:VC-C-characterization} that $\mathbf{x}_{\boldsymbol{\sigma},a}\in\iota(\chain)$ iff this cycle is finite.
\end{example}

\subsection{Virtual Markov Chains}\label{subsec:VMC}

Analogously to how a stochastic process can be identified with its law on a suitable space of paths, we propose that a probability measure on $\vchain$ should be regarded as a \textit{virtual stochastic process}.
Our main object of study is the collection of virtual stochastic processes which are analogous to the case of Markov chains (MCs) in the classical sense.

For a topological space $S$, write $\mathcal{B}(S)$ for the Borel $\sigma$-algebra on $S$; whenever we view a topological space as a measurable space, we assume that it is endowed with its Borel $\sigma$-algebra, unless stated otherwise.
We also write $\mathcal{M}(S)$ for the space of signed Borel measures on $S$, and $\mathcal{M}_1(S)$ for the space of non-negative Borel probability measures on $S$.
Let us say that $X$ \textit{is a Markov chain in $\N$ with cemetery 0} if $X$ is a Markov chain in $\N_0$ such that 0 is an absorbing state.
Similarly, for any $N\in\N$, let us say that $X$ \textit{is a Markov chain in $\llbracket 1,N\rrbracket$ with cemetery 0} if $X$ is a Markov chain in $\llbracket 0,N\rrbracket$ such that 0 is an absorbing state.

\begin{definition}
A $\vchain$-valued random variable $\mathbf{X} = \{X_N\}_{N\in\N}$ is called a \textit{virtual Markov chain (VMC)} if, for each $N\in\N$, the random variable $X_N$ is a Markov chain in $\llbracket 1,N\rrbracket$ with cemetery 0.
\end{definition}

\begin{remark}\label{rem:VMC-name}
It would be slightly more precise to call such objects \textit{Markovian virtual paths}, but we stick with the slightly imprecise name because it sounds more natural.
\end{remark}

Before we develop further properties of VMCs, we should see that they indeed generalize classical MCs.
Towards this end, the following is an extension of Lemma~\ref{lem:VC-C-characterization} to the Markovian setting.

\begin{proposition}\label{prop:VMC-MC-characterization}
If $X$ is a MC in $\N$ with cemetery 0, then $\iota(X)$ is a VMC.
Conversely, if a VMC $\mathbf{X}$ satisfies $\mathbf{X}\in\iota(\chain)$ almost surely, then there is a Markov chain $X$ in $\N$ with cemetery 0 such that $\mathbf{X} = \iota(X)$ almost surely.
\end{proposition}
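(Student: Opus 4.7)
The plan is to treat the two implications separately, using the strong Markov property for the forward direction and a finite-dimensional approximation argument for the converse.

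For the forward direction, I would fix a MC $X$ in $\N$ with cemetery $0$ and verify that each component $P_N(X)$ of $\iota(X) = \{P_N(X)\}_{N\in\N}$ is a Markov chain in $\llbracket 1,N\rrbracket$ with cemetery $0$. The key observation is that the indices $\{I_{X,N}(k)\}_{k\in\N_0}$ at which $X$ visits $\llbracket 0,N\rrbracket$ are $X$-stopping times, since the event $\{I_{X,N}(k) = i\}$ is determined by $(X(0),\ldots,X(i))$. An iterated application of the strong Markov property at these stopping times then shows that $(X(I_{X,N}(k)))_{k\in\N_0}$ is a Markov chain, and whenever $I_{X,N}(k) = \infty$ our convention assigns the value $0$, which is consistent with $0$ already being absorbing for $X$ and simply lumps into the cemetery those sample paths which escape to $\llbracket N+1,\infty\llbracket$ and never return.

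For the converse, suppose $\mathbf{X} = \{X_N\}_{N\in\N}$ is a VMC satisfying $\mathbf{X}\in \iota(\chain)$ almost surely. Applying Lemma~\ref{lem:VC-C-characterization} on the almost-sure event, the limit $X := \lim_{N\to\infty}X_N$ exists in $\chain$ and satisfies $\mathbf{X} = \iota(X)$, defining a $\chain$-valued random variable on the same probability space. Since $X$ is $\chain$-valued, $0$ is automatically absorbing, so only the Markov property of $X$ remains to be checked. From the proof of Lemma~\ref{lem:proj-approximate-identity}, for each $k\in\N_0$ one has $X_N(k) = X(k)$ whenever $N\ge \max_{0\le j\le k}X(j)$, and this bound is almost surely finite. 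Consequently, for any $i_0,\ldots,i_m\in\N_0$,
\begin{equation*}
\left\{X_N(0)=i_0,\ldots,X_N(m)=i_m\right\}\,\triangle\,\left\{X(0)=i_0,\ldots,X(m)=i_m\right\} \subseteq \left\{\exists\, j\le m : X(j) > N\right\},
\end{equation*}
and the probability of the right-hand side tends to $0$ as $N\to\infty$, giving convergence of the finite-dimensional distributions of $X_N$ to those of $X$. Now fix $n\in\N_0$ and $i_0,\ldots,i_n,j\in\N_0$ with $\P(X(0)=i_0,\ldots,X(n)=i_n)>0$. The Markov property of each $X_N$ says that $\P(X_N(n+1)=j \mid X_N(0)=i_0,\ldots,X_N(n)=i_n)$ is a function of $i_n$ and $j$ alone, with no dependence on $(i_0,\ldots,i_{n-1})$. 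Passing to the limit $N\to\infty$ using the just-established finite-dimensional convergence yields the same statement with $X$ in place of $X_N$, which is exactly the Markov property for $X$.

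I expect the main obstacle to be the finite-dimensional-distribution comparison in the converse direction: the approximating event for $X_N$ in general strictly \emph{contains} the corresponding event for $X$, because excursions of $X$ into $\llbracket N+1,\infty\llbracket$ between successive visits to $\llbracket 1,N\rrbracket$ are invisible to $X_N = P_N(X)$. This requires some bookkeeping to ensure that the ``extra mass'' vanishes in the limit, but it is not conceptually hard once one notices that any such discrepancy forces $X$ to visit a state exceeding $N$ at some earlier time, an event of vanishing probability as $N\to\infty$.
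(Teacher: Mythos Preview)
Your proposal is correct and follows essentially the same route as the paper: the forward direction via the strong Markov property at the stopping times $I_{X,N}(k)$, and the converse by taking $X=\lim_{N\to\infty}X_N$ from Lemma~\ref{lem:VC-C-characterization} and passing to the limit in the Markov identity for $X_N$. Your treatment of the converse is in fact more careful than the paper's, since you explicitly justify the finite-dimensional convergence via the symmetric-difference bound $\{\exists\,j\le m:X(j)>N\}$, whereas the paper simply writes $\P(X(\ell+1)=a_{\ell+1}\mid X(0)=a_0,\ldots,X(\ell)=a_\ell)=\lim_{N\to\infty}\P(X_N(\ell+1)=a_{\ell+1}\mid X_N(0)=a_0,\ldots,X_N(\ell)=a_\ell)$ without further comment.
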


\begin{proof}
Suppose $X$ is a Markov chain in $\N$ with cemetery 0 on a probability space $(\Omega,\F,\P)$.
Then write $\iota(X) = \{X_N\}_{N\in\N}$, so that $X_N = P_N(X)$ for all $N\in\N$, and note that we only need to check that $X_N$ is a Markov chain in $\llbracket 1,N\rrbracket$ with cemetery 0 for all $N\in\N$.
Indeed, take any $N\in\N,\ell\in\N_0$ and $a_{0},\ldots, a_{\ell+1}\in \llbracket 0,N\rrbracket$, and let us show
\begin{align*}
&\P(X_N(\ell+1) = a_{\ell+1}\,|\,X_N(0) = a_{0},\ldots, X_N(\ell) = a_{\ell}) \\
&= \P(X_N(\ell+1) = a_{\ell+1}\,|\,X_N(\ell) = a_{\ell}).
\end{align*}
Note that, if $a_j =0$ for some $j\in\llbracket 0,\ell\rrbracket$, then this forces $a_{\ell}= 0$, hence both sides above are equal to $\ind\{a_{\ell+1} =0\}$.
Therefore, we can assume $a_{0},\ldots, a_{\ell}\in \llbracket 1,N\rrbracket$, and we recall that $\{I_{X,N}(j)\}_{j\in\N_0}$ is a sequence of stopping times.
So, if $a_{\ell+1} = 0$, then the strong Markov property implies
\begin{align*}
&\P(X_N(\ell+1) = 0\,|\,X_N(0) = a_{0},\ldots, X_N(\ell) = a_{\ell}) \\
&=\P(I_{X,N}(\ell+1)=\infty\,|\,X(I_{X,N}(0)) = a_{0},\ldots, X(I_{X,N}(\ell)) = a_{\ell}) \\
&=\P(I_{X,N}(\ell+1)=\infty\,|\,X(I_{X,N}(\ell)) = a_{j}) \\
&= \P(X_N(\ell+1) = a_{\ell+1}\,|\,X_N(\ell) = a_{\ell}),
\end{align*}
and, if $a_{\ell+1}\neq 0$, then the strong Markov property implies
\begin{align*}
&\P(X_N(\ell+1) = a_{\ell+1}\,|\,X_N(0) = a_{0},\ldots, X_N(\ell) = a_{\ell}) \\
&=\P(X(I_{X,N}(\ell+1)) = a_{\ell+1}\,|\,X(I_{X,N}(0)) = a_{0},\ldots, X(I_{X,N}(\ell)) = a_{\ell}) \\
&=\P(X(I_{X,N}(\ell+1)) = a_{\ell+1}\,|\,X(I_{X,N}(\ell)) = a_{j}) \\
&= \P(X_N(\ell+1) = a_{\ell+1}\,|\,X_N(\ell) = a_{\ell}).
\end{align*}
This shows that $X_N$ is a Markov chain in $\llbracket 1,N\rrbracket$ with cemetery 0 for each $N\in\N$, as needed.

Conversely, suppose that $\mathbf{X} = \{X_N\}_{N\in\N}$ is a VMC on a probability space $(\Omega,\F,\P)$ with $\P(\mathbf{X}\in \iota(\chain)) =1$.
Then by Lemma~\ref{lem:VC-C-characterization}, the limit $X= \lim_{N\to\infty}X_N$ exists in $\chain$ and we have $\mathbf{X} = \iota(X)$ almost surely.
In particular, $X$ is measurable since $X_N$ is measurable for each $N\in\N$.
Now we just need to show that $X$ is a Markov chain in $\N$ with cemetery 0.
To do this, take any $\ell\in\N_0$ and $a_0,\ldots, a_{\ell+1}\in\N_0$, and note that we have, by the Markov property of $X_N$ for each $N\in\N$:
\begin{align*}
&\P(X(\ell+1) = a_{\ell+1}\, |\, X(0) = a_0,\ldots, X(\ell) = a_{\ell}) \\
&= \lim_{N\to\infty}\P(X_N(\ell+1) = a_{\ell+1}\, |\, X_N(0) = a_0,\ldots, X_N(\ell) = a_{\ell}) \\
&= \lim_{N\to\infty}\P(X_N(\ell+1) = a_{\ell+1}\, |\, X_N(\ell) = a_{\ell}) \\
&= \P(X(\ell+1) = a_{\ell+1}\, |\, X(\ell) = a_{\ell}).
\end{align*}
This shows that $X$ is a Markov chain in $\N_0$, as needed.
To see that 0 is a cemetery for $X$, let $\ell\in\N_0$ and $a\in\N_0$ be arbitrary, and note that, since 0 is cemetery for $X_N$ for each $N\in\N$, we have:
\begin{align*}
&\P(X(\ell+1) = a\,|\,X(\ell) = 0) \\
&= \lim_{N\to\infty}\P(X_N(\ell+1) = a\, |\, X_N(\ell) = 0) \\
&= \ind\{a=0\}.
\end{align*}
This shows that $X$ is a Markov chain in $\N$ with cemetery 0, as needed.
\end{proof}

\subsection{Examples}\label{subsec:Ex-1}

Before we develop any further theory of VMCs, it will be instructive to build a wide collection of examples.
We have already seen in Example~\ref{ex:virtual-permutations} that virtual permutations give rise to VMCs, although they provide no interesting behavior probabilistically.
We have also seen in Proposition~\ref{prop:VMC-MC-characterization} that classical MCs give rise to VMCs, although they provide no interesting behavior ``at infinity''.
Our goal is thus to find examples of VMCs that are non-trivial in both of these senses.

The main tool for doing this will be a general result which allows us to construct a large class of VMCs from various continuous-time stochastic processes.
Before doing so, let us recall some definitions from \cite{Sharpe}.
When we say that
\begin{equation*}
\mathcal{Y} = (\Omega,\F,\{\F_t\}_{t\ge0},\{Y_t\}_{t\ge 0},\{\theta_t\}_{t\ge 0},\{\P_x\}_{x\in S})
\end{equation*}
is a \textit{Markov process} in a topological space $S$, we mean the following:
\begin{itemize}
\item $(\Omega,\F)$ is a measurable space,
\item $\{\F_t\}_{t\ge0}$ is a filtration of $\F$,
\item $\{Y_t\}_{t\ge 0}$ is a collection of $S$-valued random variables adapted to $\{\F_t\}_{t\ge0}$, where $S$ is endowed with its Borel $\sigma$-algebra, $\mathcal{B}(S)$,
\item $\{\theta_t\}_{t\ge0}$ is a semigroup of measurable maps from $\Omega$ to itself satisfying $Y_s\circ\theta_t = Y_{t+s}$ for all $s,t\ge 0$,
\item $\{\P_x\}_{x\in S}$ is a collection of probability measures on $S$ such that the map $P(t,x,\Gamma) = \P_x(Y_t\in\Gamma)$ is a measurable function of $x\in S$ for each $t\ge 0$ and $\Gamma\in\mathcal{S}$, and
\item for all $t,s\ge 0,\Gamma\in\mathcal{S}$, and $x\in S$, we have $\P_x(Y_{t+s}\in\Gamma\, |\, \F_t) = P(s,Y_{t},\Gamma)$ holding $\P_x$-almost surely.
\end{itemize}
In this setting, for any $\mu\in\mathcal{M}_1(S)$, one defines the probability meaure $\P_{\mu}$ on $S$ via $\P_{\mu}(A) = \int_{x\in S}\P_x(A)\, d\mu(x)$ for all $A\in\mathcal{B}(S)$.
If $S$ is Radon, then we say that $\mathcal{Y}$ is a \textit{right process} if we additionally have the following:
\begin{itemize}
\item $\{\F_t\}_{t\ge 0}$ is right-continuous, and we have $\F_t = \bigcap_{\mu\in\mathcal{M}_1(S)}(\F_t\vee \mathcal{N}^{\mu}(\F))$ for all $t\ge 0$, where $\mathcal{N}^{\mu}(\F)$ is the collection of all subsets of $\P_{\mu}$-null $\F$-measurable sets,
\item the sample path $\{Y_t\}_{t\ge0}$ is right-continuous $\P_x$-almost surely for all $x\in S$,
\item $P(0,x,\{x\}) = 1$ holds for all $x\in S$, and
\item two techinical conditions, usually denoted (HD1) and (HD2) which we will not describe, are satisfied.
\end{itemize}
See \cite[Chapter~1, Section~8]{Sharpe} for a precise description of (HD1) and (HD2), and for an authoritative account of the general theory of right processes.
In particular, we note that right processes have the strong Markov property in that, if $\tau$ is any $\{\F_t\}_{t\ge0}$-stopping time, then for any $s\ge 0,\Gamma\in\mathcal{S}$, and $x\in S$, we have $\P_x(Y_{\tau+s}\in\Gamma\, |\, \F_\tau) = P(s,Y_{\tau},\Gamma)$ holding $\P_x$-almost surely.
Moreover, the hitting times of right processes into Borel sets are always stopping times. We say that a point $x\in S$ is \textit{irregular for itself with respect to $\mathcal{Y}$} if we have $\P_x(\inf\{t > 0: Y_t = x\} > 0) =1$.

Usually we will be somewhat cavalier and refer to $Y = \{Y_t\}_{t\ge0}$ or $(\Omega,\F,\{Y_t\}_{t\ge0},\{\P_x\}_{x\in S})$ as the Markov process itself, even though the Markov process really refers to the entire ensemble of objects $\mathcal{Y}$.
For the following result, however, we stick with the full rigorous detail:

\begin{lemma}\label{lem:construct-VMC-from-MP}
	Let $\mathcal{Y} = (\Omega,\F,\{\F_t\}_{t\ge0},\{Y_t\}_{t\ge 0},\{\theta_t\}_{t\ge 0},\{\P_x\}_{x\in S})$ be a right process in a Radon space $S$, and let $\{L_N\}_{N\in\N}$ be a sequence of distinct elements of $S$ which are irregular for themsevles with respect to $\mathcal{Y}$.
	Then for $N\in\N$ define the stopping times $\tau^N_{0} := \inf\{t \ge 0: Y_t\in\{L_1,\ldots L_N\} \}$ and $\tau^N_{j+1} := \inf\{t > \tau^N_{j}: Y_t\in\{L_1,\ldots L_N\} \}$ for $j\in\N_0$.
	For $x\in S$, we have $\P_x$-almost surely that $\{\tau_j^N\}_{j\in\N_0}$ are non-decreasing, and strictly increasing whenever they are finite.
	Moreover, there is a well-defined path $X_N = \{X_N(j)\}_{j\in\N_0}$ given by
		\begin{equation}\label{eqn:VMC-from-MP}
		X_N(j) := \begin{cases}
		a, &\text{ if } \tau_j^N<\infty \text{ and }Y_{\tau^N_j} = L_a \text{ for } a \in \llbracket 1,N\rrbracket, \\
		0, &\text{ if } \tau_j^N=\infty,
		\end{cases}
		\end{equation}
		and $\mathbf{X} := \{X_N\}_{N\in\N}$ is a VMC on $(\Omega,\F,\P_x)$ for all $x\in S$.
\end{lemma}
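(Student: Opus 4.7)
The proof breaks into three pieces corresponding to the three claims. For the monotonicity assertion, the non-decreasing property is immediate from the recursive definition; strict increase on $\{\tau_j^N<\infty\}$ is where the irregularity hypothesis enters. Applying the strong Markov property at $\tau_j^N$, the question reduces to showing that, starting from any $L_a$ with $a\in\llbracket 1,N\rrbracket$, the first hit of $\{L_1,\ldots,L_N\}$ at a strictly positive time is in fact strictly positive $\P_{L_a}$-almost surely. Since $\{L_1,\ldots,L_N\}$ is a finite set of distinct points in a Hausdorff space, one can pick an open neighborhood $U_a$ of $L_a$ disjoint from the other $L_b$'s; right-continuity of the sample paths then forces $Y_s\in U_a$ on some random interval $[0,\epsilon)$, so no $L_b$ with $b\ne a$ is hit on that interval, and the irregularity of $L_a$ for itself precludes an immediate return to $L_a$.

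Well-definedness of $X_N$ as an element of $\chain_N$ is then automatic: once $\tau_j^N=\infty$, monotonicity forces $\tau_{j'}^N=\infty$ for all $j'\ge j$, so $X_N(j')=0$ from that point onward and the forbidden pattern in the definition of $\chain_N$ is avoided.

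For the VMC claim itself, I would verify the two constituent properties in turn. The Markov property of $X_N$ follows from the strong Markov property of $\mathcal{Y}$ applied at each $\tau_\ell^N$: on the event $\{\tau_\ell^N<\infty,\ Y_{\tau_\ell^N}=L_{a_\ell}\}$, the post-$\tau_\ell^N$ shift of $Y$ is conditionally $\P_{L_{a_\ell}}$-distributed, and $X_N(\ell+1)$, being a measurable functional of this shift, has conditional distribution depending on $\F_{\tau_\ell^N}$ only through $a_\ell$; when $a_\ell=0$ the cemetery convention makes $X_N(\ell+1)=0$ deterministically, supplying the required absorbing behavior at $0$. The projectivity $P_N(X_{N+1})=X_N$ follows from the inclusion $\{L_1,\ldots,L_N\}\subset\{L_1,\ldots,L_{N+1}\}$: the times $\{\tau_j^N\}_j$ are exactly the subsequence of $\{\tau_k^{N+1}\}_k$ for which $Y_{\tau_k^{N+1}}\in\{L_1,\ldots,L_N\}$, equivalently for which $X_{N+1}(k)\ne N+1$, which is precisely the indexing selected by the map $P_N$. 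A short case analysis handles the trailing cemetery zeros, covering both the situation in which $X_{N+1}$ eventually reaches $0$ and the situation in which $Y$ revisits $L_{N+1}$ infinitely often without ever returning to $\{L_1,\ldots,L_N\}$; in the latter case all trailing $N+1$ entries are excised by $P_N$, and the resulting sequence is padded with $0$'s, matching $X_N$.

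The main obstacle is the strict-increase assertion: irregularity, as stated, addresses only returns to the same point, so to exclude the possibility of an instantaneous jump to a different $L_b$ one genuinely needs right-continuity together with the Hausdorff separation of the finitely many points $L_1,\ldots,L_N$. Once this is in hand, the strong Markov step goes through uniformly under $\P_x$ for every $x\in S$, and the remaining bookkeeping, both for the one-step conditional law of $X_N$ and for the projectivity relation, is routine.
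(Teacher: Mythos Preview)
Your proposal is correct and follows essentially the same line as the paper: irregularity plus right-continuity to get strict increase of the $\tau_j^N$ (the paper phrases this via closedness of $\mathcal{L}_N\setminus\{L_a\}$ rather than an open neighborhood $U_a$, but it is the same argument), and then the strong Markov property at $\tau_\ell^N$ for the Markov property of $X_N$. You actually go slightly further than the paper in that you explicitly verify the projectivity $P_N(X_{N+1})=X_N$, which the paper leaves implicit; your subsequence argument and the case split for the trailing zeros are exactly what is needed there.
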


\begin{proof}
Fix $x\in S$ and $N\in\N$, and write $\mathcal{L}_N = \{L_1,\ldots L_N\}$ for convenience.
Note that $\{\tau_{j}^N\}_{j\in\N_0}$ are obviously non-increasing, so our first step is to show that, $\P_x$-almost surely, we have $\tau_j^N < \tau_{j+1}^N$ whenever $\tau_{j}^{N}<\infty$. 
To do this, define the function $p_N:S\to[0,1]$ via
\begin{equation*}
p_N(y) = \P_y(\inf\{t> 0: Y_t \in \mathcal{L}_N\} > 0).
\end{equation*}
For arbitrary $a\in\llbracket 1,N\rrbracket$, note that every point being irregular for itself implies
\begin{align*}
p_N(L_a) &= \P_{L_a}(\inf\{t> 0: Y_t \in \mathcal{L}_N\} > 0)\\
&= \P_{L_a}(\inf\{t> 0: Y_t  = L_a \text{ or } Y_t\in\mathcal{L}_N\setminus\{L_a\}\} > 0) \\
&= \P_{L_a}(\inf\{t> 0: Y_t  = L_a \} > 0\text{ and } \inf\{t> 0: Y_t  \in \mathcal{L}_N\setminus\{L_a \}\} > 0) \\
&= \P_{L_a}(\inf\{t> 0: Y_t  \in \mathcal{L}_N\setminus\{L_a \}\} > 0).
\end{align*}
Also note that, on the event $\{\inf\{t> 0: Y_t \in \mathcal{L}_N\setminus\{L_a\}\} = 0\}$, we have $Y_{t_n} \in \mathcal{L}_N\setminus\{L_a\}$ for some sequence of times $\{t_n\}_{n\in\N}$ with $t_n\to 0$.
But $\mathcal{L}_N\setminus\{L_a\}$ is closed and $Y$ is right continuous, so this implies $Y_0\in\mathcal{L}_N\setminus\{L_a\}$, which is obviously a contradiction if $Y_0 = L_a$.
Combining this with the above shows $p_N(L_a) = 1$ for all $a\in\llbracket 1,N\rrbracket$.
Then let $j\in\N$ be arbitrary.
Note that the definition of $\{\tau_j^N\}_{j\in\N_0}$ implies $\ind\{\tau_{j}^N < \tau_{j+1}^N\}= \ind\{\inf\{t> 0: Y_t \in \mathcal{L}_N\} > 0\}\circ\theta_{\tau_{j}^N}$, so the strong Markov property gives
\begin{equation*}
\P_x\left(\tau_{j}^N < \tau_{j+1}^N\, \Big|\, \F_{\tau_j^N}\right) = p_N(Y_{\tau_{j}^N}) =1.
\end{equation*}
Therefore, we have
\begin{align*}
\P_x(\tau_{j}^N < \tau_{j+1}^N,\tau_{j}^N<\infty) &= \E_x\left[\ind\{\tau_{j}^N<\infty\}\P_x\left(\tau_{j}^N < \tau_{j+1}^N\, \Big|\, \F_{\tau_{j}^N}\right)\right] \\
&= \P_x\left(\tau_{j}^N<\infty\right).
\end{align*}
This shows $\P_x(\tau_{j}^N < \tau_{j+1}^N\, |\, \tau_{j}^N<\infty)=1$, so intersecting this statement over all $j\in\N_0$ gives the desired result.

To see that \eqref{eqn:VMC-from-MP} indeed constructs a well-defined element of $\chain_N$, note that $Y$ is right-continuous and $\mathcal{L}_N$ is closed, so we of course have $Y(\tau_{j}^N)\in \mathcal{L}_N$ holding $\P_x$-almost surely on the event $\{\tau_{j}^N<\infty\}$.
Then since $L_1,\ldots, L_N$ are distinct, there exists a unique $a\in\llbracket 1,N\rrbracket$ such that $Y(\tau_{j}^N) = L_a$.
The measurability of $X_N(j)$ follows from the facts that $X_N$ is progressively measurable and that $\tau_j^N$ is a stopping time.

It only remains to show that $X_N$ is a Markov chain in $\llbracket 1,N\rrbracket$ with cemetery 0 for each $N\in\N$.
Indeed, take any $N\in\N,\ell\in\N_0$, and $a_0,\ldots, a_{\ell+1}\in\llbracket 0,N\rrbracket$, and let us show
\begin{align*}
&\P_x(X_N(\ell+1)=a_{\ell+1}\,|\,X_N(0)=a_{0},\ldots X_N(\ell)=a_{\ell}) \\
&=\P_x(X_N(\ell+1)=a_{\ell+1}\,|\,X_N(\ell)=a_{\ell}).
\end{align*}
To do this, first suppose that $a_j = 0$ for some $j\in\llbracket 0,\ell\rrbracket$.
This implies $\tau_j^N = \infty$, hence $\tau_{\ell}^N = \infty$, so both sides above are equal to $\ind\{a_{\ell+1} = 0\}$.
Otherwise, we can assume $a_0,\ldots, a_{\ell}\in\llbracket 1,N\rrbracket$.
Now, if $a_{\ell+1} = 0$, we use the strong Markov property of $Y$ to get:
\begin{align*}
&\P_x(X_N(\ell+1)=0\, |\, X_N(0)=a_{0},\ldots, X_N(\ell)=a_{\ell}) \\
&=\P_x(\tau_{\ell+1}^N=\infty\, |\, Y_{\tau_{0}^N} = L_{a_{0}},\ldots, Y_{\tau_{\ell}^N} = L_{a_{\ell}}) \\
&=\P_x(\tau_{\ell+1}^N=\infty\, |\, Y_{\tau_{\ell}^N} = L_{a_{\ell}}) \\
&=\P_x(X_N(\ell+1)=0\, |\, X_N(\ell)=a_{\ell}),
\end{align*}
and, if $a_{\ell+1}\neq 0$, we use the strong Markov property again to get
\begin{align*}
&\P_x(X_N(\ell+1)=a_{\ell+1}\,|\,X_N(0)=a_{0},\ldots, X_N(\ell)=a_{\ell}) \\
&=\P_x(Y_{\tau_{\ell+1}^N} = L_{a_{\ell+1}}\,|\,Y_{\tau_{0}^N} = L_{a_{0}},\ldots, Y_{\tau_{\ell}^N} = L_{a_{\ell}}) \\
&=\P_x(Y_{\tau_{\ell+1}^N} = L_{a_{\ell+1}}\,|\, Y_{\tau_{\ell}^N} = L_{a_{\ell}}) \\
&=\P_x(X_N(\ell+1)=a_{\ell+1}\,|\,X_N(\ell)=a_{\ell}).
\end{align*}
This finishes the proof of the result.
\end{proof}

First let observe that the VMCs arising from virtual permutations via Example~\ref{ex:virtual-permutations} can also be constructed via Lemma~\ref{lem:construct-VMC-from-MP}.

\begin{example}\label{ex:VMC-from-virtual-perm}
	Let $\boldsymbol{\sigma} = \{\sigma_N\}_{N\in\N}\in\vperm$ be a virtual permutation, and set $S := \coprod_{c\in\N}\circle_c$ where $\circle_c$ is a copy of the circle $\R/\Z$ for each $c\in\N$.
	The goal will be to define $\{L_N\}_{N\in\N}$ in such a way that each ``cycle'' in $\boldsymbol{\sigma}$ corresponds to a unique circle $\circle_c$ for $c\in\N$.
	
	To do this, first set $L_1$ to be any point on $\circle_1$, and then proceed recursively.
	For $N\in\N$, consider two cases:
	If $N+1$ lies in its own cycle in $\sigma_{N+1}$, then we set $L_{N+1}$ to be any point in $\circle_c$, where $c\in\N$ is the smallest index such that $\coprod_{c'=c+1}^{\infty}\circle_c$ contains no elements of $\{L_{N'}\}_{N'=1}^{N}$.
	If $N+1$ lies in an existing cycle in $\sigma_{N+1}$, then there exist $a,b\in\llbracket 1,N\rrbracket$ satisfying $\sigma_{N+1}(a) = N+1$ and $\sigma_{N+1}(N+1) = b$, and, by construction, this means there is some $c\in\N$ such that $L_a$ and $L_b$ both lie in $\circle_c$, with the additional property that no element of $\{L_{N'}\}_{N'=1}^{N}$ lies in the clockwise arc between them.
	In this case, let $L_{N+1}$ be an arbitrary point on this clockwise arc.
	(To remove the choice from the construction above, we could arbitrarily decide that points in new cycles get sent to $0\text{ mod } 1$ and that points in old cycles get sent to the midpoints of the arcs they fall into.)
	
	Now consider the non-random process $Y$ which moves clockwise with rate one; let $\{\P_x\}_{x\in S}$ denote the (degenerate) probability measures governing this process.
	This clearly satisfies the hypotheses of Lemma~\ref{eqn:VMC-from-MP}, so we construct $\mathbf{X}$ by the result therein.
	Then, for arbitrary $a\in\N$, the VMC $\mathbf{X}$ on $(\Omega,\F,\P_{L_a})$ coincides with the virtual chain $\mathbf{x}_{\boldsymbol{\sigma},a}$ given by Example~\ref{ex:virtual-permutations}.
	To visualize this construction, see Figure~\ref{fig:vperm}.
	
	Observe that $\boldsymbol{\sigma}$ corresponds to a classical permutation if and only if this construction is such that $\circle_c\cap\{L_N\}_{N\in\N}$ is finite for each $c\in\N$.
	By compactness, this is equivalent to the statement that $\{L_N\}_{N\in\N}$ has no accumulation points.
\end{example}

\begin{figure}
	\begin{tikzpicture}[scale=0.4]
	\draw[thick] (0,0) circle [radius=2] {};
	\draw[thick] (6,0) circle [radius=2] {};
	\draw[thick] (12,0) circle [radius=2] {};
	\draw (16,0) node[]{$\cdots$};
	
	\draw[fill,color=blue] (0,2) circle [radius=0.2] node[above]{$L_1$};
	\draw[fill,color=blue] (6,2) circle [radius=0.2] node[above]{$L_2$};
	\draw[fill,color=blue] (0,-2) circle [radius=0.2] node[below]{$L_3$};
	\draw[fill,color=blue] (-2,0) circle [radius=0.2] node[left]{$L_4$};
	\draw[fill,color=blue] (12,2) circle [radius=0.2] node[above]{$L_5$};
	\draw[fill,color=blue] (-1.414,1.414) circle [radius=0.2] node[above left]{$L_6$};
	\draw[fill,color=blue] (6,-2) circle [radius=0.2] node[below]{$L_7$};
	\draw[fill,color=blue] (4,0) circle [radius=0.2] node[left]{$L_8$};
	\draw[fill,color=blue] (4.586,1.414) circle [radius=0.2] node[above left]{$L_9$};
	\draw[fill,color=blue] (5.2,1.85) circle [radius=0.2];
	\draw[fill,color=blue] (5.6,1.95) circle [radius=0.2];
	\draw[fill,color=blue] (5.75,1.975) circle [radius=0.2];x
	\draw[color=blue] (4.85,2.25) node[]{\tiny $\cdot^{\ \cdot^{\ \cdot}}$};
	
	\draw[thick,*->,color=red] (8,0) arc (0:-60:2);
	\draw[color=red] (7.5,-1) node[below right] {$Y$};
	
	\draw (0,0) node[]{$\circle_1$};
	\draw (6,0) node[]{$\circle_2$};
	\draw (12,0) node[]{$\circle_3$};
	
	\end{tikzpicture}
	\caption{As in Example~\ref{ex:VMC-from-virtual-perm}, we can construct the VMC corresponding to a virtual permutation via Lemma~\ref{lem:construct-VMC-from-MP}.}
	\label{fig:vperm}
\end{figure}
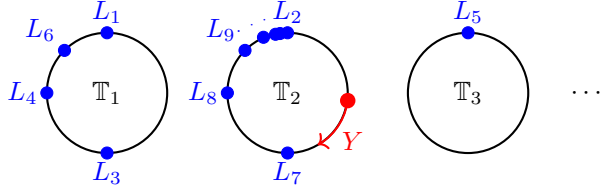

In many (but not all) of the examples of VMCs that we will see throughout the paper, the cemetery state of 0 is never visited.
In the setting of Lemma~\ref{lem:construct-VMC-from-MP}, a sufficient condition for this to occur is that the stopping times $\{\tau_{j}^N\}_{j\in\N_0,N\in\N}$ are all finite; this is in turn implied by the property that the process $Y$ is point-recurrent in the sense that
\begin{equation*}
\P_x(H_Y(x') \text{ is unbounded for all } x'\in S) =1
\end{equation*}
for all $x\in S$, where we have defined the set $H_Y(x') := \{t\ge0: Y_t = x' \}$.

We also give another interesting class of examples arising from the theory of regenerative sets.

\begin{example}\label{ex:down-from-infty}
Let $F$ the distribution function of a measure on $(0,\infty)$, and let $(\Omega,\F,\P)$ be a probability space on which is defined a sequence $\{Z_k\}_{k\in\N}$ of i.i.d. random variables with distribution $F$.
Then define the renewal (point) process $E$ via
\begin{equation*}
E := \left\{\sum_{k=1}^{K}Z_k: K\in\N_0\right\},
\end{equation*}
and note that $E$ is a regenerative random closed set on $(\Omega,\F,\P)$.
It also has empty interior $\P$-almost surely, so by \cite[Theorem~1]{RegenerativeSets}, it follows that the age process $Y = \{Y_t\}_{t\ge 0}$ of $E$, given by
\begin{equation*}
Y_t := t - \sup\{0 \le s \le t : s \in E\}
\end{equation*}
for $t\ge 0$ is a right process in the state space $S = [0,\infty)$.
Now let $\{L_N\}_{N\in\N}$ be a strictly decreasing sequence with $L_N\downarrow 0$ as $N\to\infty$.
Finally, let $\mathbf{X} = \{X_N\}_{N\in\N}$ be the VMC on $(\Omega,\F,\P)$ guaranteed by Lemma~\ref{lem:construct-VMC-from-MP} via \eqref{eqn:VMC-from-MP}.

Note that, $\P$-almost surely, we have $Z_1 > 0$, hence $X_N(0) = N$ for all $N > \frac{1}{Z_1}$.
Thus, $\P$-almost surely, the limit $\lim_{N\to\infty}X_N$ does not exist in $\chain$.
Therefore, by Lemma~\ref{lem:VC-C-characterization} we have $\P(\mathbf{X}\in \iota(\chain)) = 0$; see Figure~\ref{fig:down-from-infty} for an illustration.
Intuitively speaking, then, $\mathbf{X}$ is a VMC which tries to ``come down from infinity'' but ``jumps back to infinity'' at some random times.
\end{example}

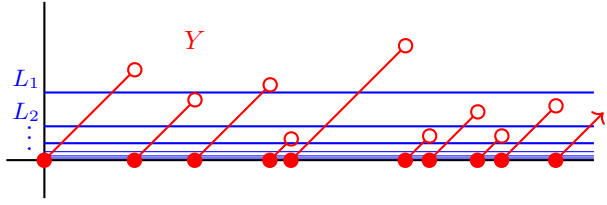
\begin{figure}
	\begin{tikzpicture}[scale=0.4,shorten >=-3pt,shorten <=-3pt]
	\draw[-, thick] (0, -1) to (0,5);
	\draw[-, thick] (-1,0) to (18,0);
	
	\draw[-, thick, color=blue] (0.25,2.25) to (18,2.25);
	\draw[-, thick, color=blue] (0.25,1.125) to (18,1.125);
	\draw[-, thick, color=blue] (0.25,0.5625) to (18,0.5625);
	\draw[-, color=blue] (0.25,0.28125) to (18,0.28125);
	\draw[-, color=blue] (0.25,0.140625) to (18,0.140625);
	\draw[-, color=blue] (0.25,0.0703125) to (18,0.0703125);
	
	\draw[*-o, thick,color=red] (0,0) to (3,3);
	\draw[*-o, thick,color=red] (3,0) to (5,2);
	\draw[*-o, thick,color=red] (5,0) to (7.5,2.5);
	\draw[*-o, thick,color=red] (7.5,0) to (8.2,0.7);
	\draw[*-o, thick,color=red] (8.2,0) to (12,3.8);
	\draw[*-o, thick,color=red] (12,0) to (12.8,0.8);
	\draw[*-o, thick,color=red] (12.8,0) to (14.4,1.6);
	\draw[*-o, thick,color=red] (14.4,0) to (15.2,0.8);
	\draw[*-o, thick,color=red] (15.2,0) to (17,1.8);
	\draw[*->, thick,color=red] (17,0) to (18.4,1.4);
	
	\draw[color=blue] (-0.6,2.7) node[]{\small $L_1$};
	\draw[color=blue] (-0.6,1.6) node[]{\small $L_2$};
	\draw[color=blue] (-0.5,1) node[]{$\vdots$};
	
	\draw[color=red] (5,4) node[]{$Y$};
	\end{tikzpicture}
	\label{fig:down-from-infty}
	\caption{A generic sample path of $Y$ and $\mathbf{X}$, when $Y$ is the age process of a regenerative set as given in Example~\ref{ex:down-from-infty}.}
\end{figure}

We also note that the construction above works for $E$ any regenerative set with empty interior almost surely.
In particular, it seems interesting to consider the case that $E$ is the zero set of a Brownian motion, in which case there are very many short excursions down from infinity.

\begin{example}\label{ex:splitting-VMC}
Consider the space $S = (-1,1)$ with the topology inherited form the usual topology on the real line.
Now let $(\Omega,\F,\{Y_t\}_{t\ge 0},\{\P_x\}_{x\in S})$ be the process which moves deterministically at rate one towards the closer of the two endpoints $\{-1,+1\}$, which teleports back to $x=0$ once it hits one of these endpoints, and which breaks the tie at $x=0$ by moving into either half of the domain with equal probability.

More concretely, the semigroup $\{P_t\}_{t\ge 0}$ of this process defined via $(P_tf)(x) := \E_x[f(Y_t)]$ for all bounded, measurable $f:S\to \R$ is exactly
\begin{equation*}
(P_tf)(x) = \begin{cases}
\frac{1}{2}f(t \text{ mod } 1)+\frac{1}{2}f(-(t \text{ mod } 1)), &\text{ if } x=0, \\
f(x+t) &\text{ if } x>0, \text{ and } |x|+t < 1, \\
f(x-t) &\text{ if } x< 0, \text{ and } |x|+t < 1, \\
\frac{1}{2}f(x+t \text{ mod } 1)+\frac{1}{2}f(-(x+t \text{ mod } 1)), &\text{ if } x> 0 \text{ and } |x|+t \ge 1, \\
\frac{1}{2}f(-x+t \text{ mod } 1)+\frac{1}{2}f(-(-x+t \text{ mod } 1)), &\text{ if } x< 0 \text{ and } |x|+t \ge 1. \\
\end{cases}
\end{equation*}
Of course, this process is point-recurrent and every point is irregular for itself.
However, it is easy to see that this is not a strong Markov process with respect to the right-continuous augmentation of its natural filtration $\{\F_t\}_{t\ge 0}$.
(For example, the strong Markov property fails when applied to the $\{\F_{t+}\}_{t\ge 0}$-stopping time $\tau :=\inf\{t \ge \tau_0: Y_t > 0 \}$, where $\tau_{0} := \inf\{t\ge 0: Y_t = 0 \}$.)

Now choose any sequence $\{\ell_N\}_{N\in\N}$ in $(0,1)$ with $\ell_N\downarrow 0$ as $N\to\infty$, and define the points $\{L_N\}_{N\in\N}$ via
\begin{equation*}
L_N = \begin{cases}
\ell_{(N+1)/2}, &\text{ if } N \text{ odd}, \\
-\ell_{N/2}, &\text{ if } N \text{ even}. \\
\end{cases}
\end{equation*}
An argument identical to that of Lemma~\ref{lem:construct-VMC-from-MP} still applies to show that $\mathbf{X}$ constructed therein is a VMC, since the strong Markov property is only needed at the hitting times of $\{L_N\}_{N\in\N}$.
We regard $\mathbf{X} = \{X_N\}_{N\in\N}$ as a VMC on the probability space $(\Omega,\F,\P_0)$.

We of course have $\lim_{N\to\infty}X_N(0) = \infty$ holding $\P_0$-almost surely, hence $\P_0(\mathbf{X} \in \iota(\chain)) =0$ by Proposition~\ref{prop:VMC-MC-characterization}.
See Figure~\ref{fig:splitting-VMC} for a generic realization of the sample path of $Y$ and $\mathbf{X}$ under $\P_0$.
Intuitively speaking, $\mathbf{X}$ is a VMC which has ``two different ways to come down from infinity''; it chooses one uniformly at random, traverses the (deterministic) path down to the lowest possible state, and then repeats the process over again.

This VMC has a non-trivial probabilistic component as well as non-trivial behavior ``at infinity'' as desired, but it is distinguished from out previous examples in that it also has non-trivial ``probabilistic behavior at infinity''.
To see this, consider the \textit{tail $\sigma$-algbra of $\mathbf{X}$}, defined via $\mathcal{T}(\mathbf{X}) = \bigcap_{N\in\N}\sigma(X_N,X_{N+1},\ldots)$.
Then, the event
\begin{equation*}
A = \{X_N(0) \text{ is even for infinitely many } N\in\N \}
\end{equation*}
is $\mathcal{T}(\mathbf{X})$-measurable, and satisfies $\P_0(A)= 1/2$.
This is closely related to the fact that the Blumenthal zero-one law fails for the constituent Markov process $Y$.
\end{example}

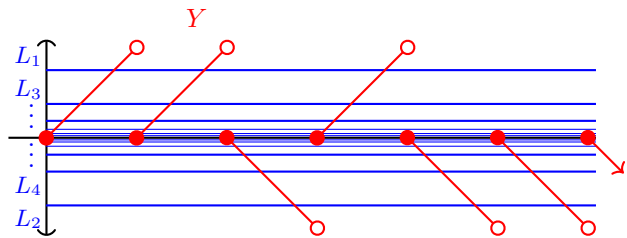
\begin{figure}
	\begin{tikzpicture}[scale=0.4,shorten >=-3pt,shorten <=-3pt]
	\draw[(-), thick] (0, -3) to (0,3);
	\draw[-, thick] (-1,0) to (18,0);
	
	\draw[-, thick, color=blue] (0.25,2.25) to (18,2.25);
	\draw[-, thick, color=blue] (0.25,1.125) to (18,1.125);
	\draw[-, thick, color=blue] (0.25,0.5625) to (18,0.5625);
	\draw[-, color=blue] (0.25,0.28125) to (18,0.28125);
	\draw[-, color=blue] (0.25,0.140625) to (18,0.140625);
	\draw[-, color=blue] (0.25,0.0703125) to (18,0.0703125);
	
	\draw[-, thick, color=blue] (0.25,-2.25) to (18,-2.25);
	\draw[-, thick, color=blue] (0.25,-1.125) to (18,-1.125);
	\draw[-, thick, color=blue] (0.25,-0.5625) to (18,-0.5625);
	\draw[-, color=blue] (0.25,-0.28125) to (18,-0.28125);
	\draw[-, color=blue] (0.25,-0.140625) to (18,-0.140625);
	\draw[-, color=blue] (0.25,-0.0703125) to (18,-0.0703125);
	
	\draw[*-o, thick,color=red] (0,0) to (3,3);
	\draw[*-o, thick,color=red] (3,0) to (6,3);
	\draw[*-o, thick,color=red] (6,0) to (9,-3);
	\draw[*-o, thick,color=red] (9,0) to (12,3);
	\draw[*-o, thick,color=red] (12,0) to (15,-3);
	\draw[*-o, thick,color=red] (15,0) to (18,-3);
	\draw[*->, thick,color=red] (18,0) to (19,-1);

	\draw[color=blue] (-0.6,2.7) node[]{\small $L_1$};
	\draw[color=blue] (-0.6,1.6) node[]{\small $L_3$};
	\draw[color=blue] (-0.5,1) node[]{$\vdots$};
	
	\draw[color=blue] (-0.6,-2.7) node[]{\small $L_2$};
	\draw[color=blue] (-0.6,-1.6) node[]{\small $L_4$};
	\draw[color=blue] (-0.5,-0.3) node[]{$\vdots$};
	
	\draw[color=red] (5,4) node[]{$Y$};
	\end{tikzpicture}
	\label{fig:splitting-VMC}
	\caption{A generic sample path of the process $Y$ described by Example~\ref{ex:splitting-VMC}.}
\end{figure}

\section{Canonical Data and the Representation Theorem}\label{sec:canonical-data}

Suppose $(\Omega,\F,\P)$ is some probability space on which is defined $X = \{X(i)\}_{i\in\N_0}$ a MC in $\N$ with cemetery 0.
Then, $X$ has an associated \textit{initial distribution (ID)} $\nu = \P\circ X(0)^{-1}$ and \textit{transition matrix (TM)} $K\in[0,1]^{\N_0\times\N_0}$ given by $K(a,b) = \P(X(1) = b\, |\, X(0) = a)$ for $a,b\in\N_0$, which also satisfies $K(0,a) = \ind\{a = 0\}$ for all $a\in\N_0$.
In this case, we say that $(\nu,K)$ are the \textit{canonical data} of $X$.
It is classical that for every pair of canonical data there is a probability space on which is defined a MC with these canonical data; moreover, this object is unique in the sense that any two MCs with the same canonical data must have the same law.
In other words, the collection of laws of MCs in $\N$ with cemetery 0 is in bijection with the collection of pairs of canonical data.

The goal of this section is show that an analogous statement is true for VMCs (Theorem~\ref{thm:VMC-VID-VTM-bijection}), where we suitably define the notions of \textit{virtual transition matrix (VTM}, Subsection~\ref{subsec:VTM}\textit{)} and \textit{virtual initial distribution (VID}, Subsection~\ref{subsec:VID}\textit{)}, and introduce an important notion of \textit{compatibility} (Subsection~\ref{subsec:compatibility}) which is not needed in the MC case.
In Subsection~\ref{subsec:Ex-2} we explore various examples.

\subsection{Virtual Transition Matrices}\label{subsec:VTM}

First we define the important notion of a virtual transition matrix, which will later be seen to encode most of the ``interesting'' information about VMCs.

\begin{definition}
Write $\transmat$ for the space of all transition matrices on $\N_0$ with $K(0,0) = 1$, which we endow with the topology of pointwise convergence.
\end{definition}

Moreover, for each $N\in\N$ write $\transmat_N$ for the space of all transition matrices on $\N_0$ with $K(a,a) = 1$ for all $a\in \N_0\setminus \llbracket 1,N\rrbracket$.
Observe of course that we have $\transmat_N\subseteq \transmat_{M}\subseteq \transmat$ for all $N,M\in\N$ with $N\le M$.

Next we define suitable projection maps for transition matrices from our existing projection maps for paths.
Specifically, for each $K\in\transmat$ and $N\in\N$, we define $P_N(K)$ to be the transition matrix of the Markov chain $P_N(X)$ where $X$ is a Markov chain with transition matrix $K$.
More specifically, for each $K \in\transmat$ and $a\in\N_0$ write $(\Omega,\F,\P_{K,a})$ for a probability space on which is defined a MC $X = \{X(i)\}_{i\in\N_0}$ with transition matrix $K$ and initial state $X(0) = a$.
Then, for each $N\in\N$ and $a,b\in\N_0$, set
\begin{equation}\label{eqn:K-proj}
(P_N(K))(a,b) = \begin{cases}
\P_{K,a}((P_N(X))(1) = b), &\text{ if } a \in\llbracket 1,N\rrbracket, \\
1, &\text{ else if } a = b, \\
0, &\text{ else if } a \neq b. \\
\end{cases}
\end{equation}
It is easily verified that indeed $P_N(K)\in\transmat_{N}$.

The next goal is to develop some properties of these projection maps which parallel (to the extent possible) the analogous results for the projection maps on spaces of paths.

\begin{lemma}\label{lem:K-proj-iterate}
	For $N\in\N$, we have $P_{N}\circ P_{N+1} = P_N$ on $\transmat$.
\end{lemma}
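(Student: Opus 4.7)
The plan is to reduce the claim to the path-level projectivity $P_N \circ P_{N+1} = P_N$ on $\chain$, which was already noted just before Lemma~\ref{lem:proj-cts}. Fix $K \in \transmat$ and $a, b \in \N_0$. First I would dispose of the trivial case $a \in \N_0 \setminus \llbracket 1, N \rrbracket$: by the ``else'' branches of \eqref{eqn:K-proj}, applying $P_N$ to any element of $\transmat$ produces a matrix whose $a$-th row is $\ind\{a = b\}$, so both $(P_N(K))(a,b)$ and $(P_N(P_{N+1}(K)))(a,b)$ equal $\ind\{a = b\}$.

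For the substantive case $a \in \llbracket 1, N \rrbracket$, I would work on the canonical probability space $(\Omega, \F, \P_{K,a})$ on which is defined a Markov chain $X$ with transition matrix $K$ and initial state $a$. By Proposition~\ref{prop:VMC-MC-characterization} together with the definition \eqref{eqn:K-proj}, the process $Y := P_{N+1}(X)$ is a concrete realization on $(\Omega, \F, \P_{K,a})$ of a Markov chain with transition matrix $P_{N+1}(K)$ started at $a$ (using $\llbracket 1, N \rrbracket \subseteq \llbracket 1, N+1 \rrbracket$, so that $Y(0) = a$). Applying the defining equation \eqref{eqn:K-proj} to the matrix $P_{N+1}(K)$ and then invoking the path-level projectivity $P_N(P_{N+1}(X)) = P_N(X)$ yields
\begin{equation*}
(P_N(P_{N+1}(K)))(a,b) = \P_{K,a}((P_N(Y))(1) = b) = \P_{K,a}((P_N(X))(1) = b) = (P_N(K))(a,b),
\end{equation*}
which completes the argument.

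This is essentially a routine ``compatibility of projections'' verification that piggybacks on the analogous path-level identity, so I do not expect any substantive obstacle. The only point requiring a little care is that the definition of $P_{N+1}(K)$ via \eqref{eqn:K-proj} gives a well-defined, time-homogeneous transition matrix in the first place --- so that the phrase ``the Markov chain with transition matrix $P_{N+1}(K)$'' is meaningful --- but this is precisely the content of the forward implication in Proposition~\ref{prop:VMC-MC-characterization}.
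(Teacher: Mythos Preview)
Your proposal is correct and follows essentially the same route as the paper: reduce to the path-level projectivity $P_N\circ P_{N+1}=P_N$ on $\chain$ via the identification of $P_{N+1}(X)$ as a Markov chain with transition matrix $P_{N+1}(K)$. You are in fact slightly more careful than the paper, which writes the chain of equalities for arbitrary $a,b\in\N_0$ without separating out the case $a\notin\llbracket 1,N\rrbracket$ that you handle explicitly.
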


\begin{proof}
For any $K\in\transmat$ and $a,b\in \N_0$ we have, from the projectivity property $P_{N}\circ P_{N+1} = P_N$ on $\chain$,
\begin{align*}
(P_N(K))(a,b) &= \P_{K,a}((P_N(X))(1) = b) \\
&= \P_{K,a}((P_{N}(P_{N+1}(X)))(1) = b) \\
&= \P_{P_{N+1}(K),a}((P_{N}(X))(1) = b) \\
&= (P_N(P_{N+1}(K)))(a,b),
\end{align*}
as claimed.
\end{proof}

\begin{lemma}\label{lem:K-proj-approx-identity}
	For each $K\in\transmat$, we have $P_N(K)\to K$ as $N\to\infty$.
\end{lemma}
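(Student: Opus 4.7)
The plan is to verify pointwise convergence $(P_N(K))(a,b)\to K(a,b)$ for each fixed pair $(a,b)\in\N_0\times\N_0$, which is exactly what convergence in the product topology on $[0,1]^{\N_0\times\N_0}$ requires. I split into two cases according to whether $a$ is the cemetery or not.

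First, if $a=0$, then by the definition of $\transmat$ we have $K(0,b)=\ind\{b=0\}$, and the definition \eqref{eqn:K-proj} gives $(P_N(K))(0,b)=\ind\{b=0\}$ for every $N\in\N$, so the sequence is already constant and equal to the target.

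Next, fix $a\in\N$ and $b\in\N_0$. For all $N\ge a$ we have $a\in\llbracket 1,N\rrbracket$, so \eqref{eqn:K-proj} gives
\begin{equation*}
(P_N(K))(a,b) = \P_{K,a}((P_N(X))(1)=b),
\end{equation*}
where $X$ is, on $(\Omega,\F,\P_{K,a})$, a Markov chain with transition matrix $K$ started at $a$. By Lemma~\ref{lem:proj-approximate-identity}, applied pathwise, $P_N(X)\to X$ in $\chain$ as $N\to\infty$ almost surely. Since the topology on $\chain$ is the relative topology from the product of discrete copies of $\N_0$, this convergence is coordinate-wise eventual equality: in particular $(P_N(X))(1)=X(1)$ for all sufficiently large $N$ (explicitly, for all $N\ge \max\{a,X(1)\}$ if $X(1)\in\N$, and for all $N\ge a$ if $X(1)=0$). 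Hence $\ind\{(P_N(X))(1)=b\}\to \ind\{X(1)=b\}$ almost surely.

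Applying bounded convergence (the indicators are uniformly bounded by $1$) then gives
\begin{equation*}
(P_N(K))(a,b)=\P_{K,a}((P_N(X))(1)=b)\longrightarrow \P_{K,a}(X(1)=b)=K(a,b),
\end{equation*}
which completes both cases. There is no serious obstacle here: the result is essentially a pushforward of Lemma~\ref{lem:proj-approximate-identity} through the map that sends a path to the distribution of its first step, and the only subtlety is cleanly separating out the trivial $a=0$ row, which the definition of $\transmat$ and of $P_N$ make consistent with one another.
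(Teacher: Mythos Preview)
Your proof is correct and follows essentially the same route as the paper's: both reduce the question to pointwise convergence of probabilities $\P_{K,a}((P_N(X))(1)=b)\to \P_{K,a}(X(1)=b)$ via Lemma~\ref{lem:proj-approximate-identity}. Your version is simply more explicit, separating out the $a=0$ case and naming the bounded convergence step, whereas the paper compresses the argument into a single line.
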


\begin{proof}
	Let $a,b\in\N_0$ be arbitrary.
	For $N\ge \max\{a,b\}$, we find ourselves in the first case of \eqref{eqn:K-proj}, hence we have
	\begin{equation*}
	\lim_{N\to\infty}K_N(a,b) = \lim_{N\to\infty}\P_{K,a}(X_N(1) = b)= \P_{K,a}(X(1) = b)= K(a,b),
	\end{equation*}
	as needed.
\end{proof}

\begin{definition}
	The set
	\begin{equation*}
	\vtransmat := \left\{\{K_N\}_{N\in\N}\in \prod_{N\in\N}\transmat_N: P_{N}(K_{N+1}) =K_N \text{ for all }N\in\N \right\}
	\end{equation*}
	is called the space of \textit{virtual transition matrices (VTMs)}.
\end{definition}

\begin{lemma}\label{lem:iota-K-injection}
	The map $\iota:\transmat\to\vtransmat$ defined in the natural way via $\iota(K) := \{P_N(K)\}_{N\in\N}$ is well-defined and injective.
\end{lemma}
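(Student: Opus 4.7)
The plan is to show well-definedness and injectivity separately, each by a short reduction to the two preceding lemmas on the projection maps $P_N:\transmat\to\transmat_N$.

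For well-definedness, I need to verify two things about the sequence $\{P_N(K)\}_{N\in\N}$: that each term lies in $\transmat_N$, and that the projectivity condition $P_N(P_{N+1}(K)) = P_N(K)$ holds for every $N\in\N$. The first point was already noted immediately after the formula \eqref{eqn:K-proj} defining $P_N$, since the construction forces $(P_N(K))(a,a) = 1$ for all $a\in\N_0\setminus\llbracket 1,N\rrbracket$. The second point is exactly the statement of Lemma~\ref{lem:K-proj-iterate}, applied at each $N$. Together these give $\iota(K)\in\vtransmat$, so $\iota$ is well-defined.

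For injectivity, suppose $K,K'\in\transmat$ satisfy $\iota(K) = \iota(K')$, i.e. $P_N(K) = P_N(K')$ for every $N\in\N$. By Lemma~\ref{lem:K-proj-approx-identity}, both sequences converge pointwise to their respective limits:
\begin{equation*}
K = \lim_{N\to\infty} P_N(K) = \lim_{N\to\infty} P_N(K') = K',
\end{equation*}
so $K = K'$ as required.

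There is no real obstacle here; the whole content of the lemma has already been packaged into Lemmas~\ref{lem:K-proj-iterate} and~\ref{lem:K-proj-approx-identity}, and the proof is essentially a two-line bookkeeping exercise. The only point worth flagging is that the topology on $\transmat$ (pointwise convergence) is precisely what makes Lemma~\ref{lem:K-proj-approx-identity} yield $K=K'$ from the agreement of all finite-$N$ projections; had we used a coarser topology, this step would require a separate argument.
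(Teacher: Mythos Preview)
Your proof is correct and matches the paper's approach essentially line for line: well-definedness via Lemma~\ref{lem:K-proj-iterate}, and injectivity via Lemma~\ref{lem:K-proj-approx-identity}. The paper's version is slightly terser (it omits the explicit mention that $P_N(K)\in\transmat_N$ and your closing remark on the topology), but there is no substantive difference.
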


\begin{proof}
	To see $\iota(K)\in\vtransmat$, simply apply Lemma~\ref{lem:K-proj-iterate}.
	To see injectivity, suppose that $K,K'\in \transmat$ have $\iota(K) = \iota(K')$, and write $\iota(K) = \{K_N\}_{N\in\N}$ and $\iota(K') = \{K'_N\}_{N\in\N}$.
	Then, for any $a,b\in\N_0$, we have, by Lemma~\ref{lem:K-proj-approx-identity},
	\begin{equation*}
	K(a,b) = \lim_{N\to\infty}K_N(a,b) = \lim_{N\to\infty}K'_N(a,b) = K'(a,b),
	\end{equation*}
	as claimed.
\end{proof}

\begin{lemma}\label{lem:VK-K-characterization}
	A VTM $\mathbf{K} = \{K_N\}_{N\in\N}\in\vtransmat$ lies in $\iota(\transmat)$ if and only if $K = \lim_{N\to\infty}K_N$ exists and $\mathbf{K} = \iota(K)$.
\end{lemma}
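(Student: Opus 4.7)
The plan is to prove both directions directly from the lemmas already established, closely mirroring the proof of Lemma~\ref{lem:VC-C-characterization}.

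For the ``if'' direction, suppose that $K = \lim_{N\to\infty}K_N$ exists in $\transmat$ and $\mathbf{K} = \iota(K)$. Then by the very definition of $\iota$, the virtual transition matrix $\mathbf{K}$ lies in $\iota(\transmat)$, and nothing further needs to be argued.

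For the ``only if'' direction, suppose that $\mathbf{K} = \iota(K)$ for some $K \in \transmat$. By definition of $\iota$, this means $K_N = P_N(K)$ for each $N\in\N$. I would then invoke Lemma~\ref{lem:K-proj-approx-identity} to conclude that $K_N = P_N(K) \to K$ as $N\to\infty$ in the pointwise-convergence topology on $\transmat$. Therefore the limit $\lim_{N\to\infty}K_N$ exists and equals $K$, and the identity $\mathbf{K} = \iota(K) = \iota(\lim_{N\to\infty}K_N)$ holds by hypothesis.

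There is no real obstacle here: the statement is essentially a packaged corollary of Lemma~\ref{lem:iota-K-injection} (which guarantees $\iota$ is well-defined) together with Lemma~\ref{lem:K-proj-approx-identity} (which identifies $K$ as the only possible pointwise limit of the $K_N$). The role of the lemma is therefore expository, serving to make explicit the fact that, just as in the path setting, $\lim_{N\to\infty}K_N$ is the unique candidate for a preimage of $\mathbf{K}$ under $\iota$, and that a VTM fails to come from a classical TM precisely when either this pointwise limit does not exist in $\transmat$ or it exists but does not satisfy $\mathbf{K} = \iota(\lim_{N\to\infty}K_N)$.
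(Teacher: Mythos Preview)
Your proof is correct and follows essentially the same approach as the paper: the ``if'' direction is trivial, and the ``only if'' direction is handled by invoking Lemma~\ref{lem:K-proj-approx-identity} to show that $K_N = P_N(K) \to K$. The paper's proof is just a more terse version of what you wrote.
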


\begin{proof}
If $\lim_{N\to\infty}K_N$ exists and $\mathbf{K} = \iota(\lim_{N\to\infty}K_N)$, then $\mathbf{K}\in\iota(\transmat)$ trivially.
Conversely, if we have $\mathbf{K} = \iota(K)$ for some $K\in\transmat$, then the result follows from Lemma~\ref{lem:K-proj-approx-identity}.
\end{proof}

Next let us show that the projection $P_N:\transmat_{N+1}\to\transmat_{N}$ for any $N\in\N$ has a particularly simple form which is very amenable to calculations.

\begin{lemma}
For $N\in\N$ and $K\in\transmat_{N+1}$, we have
\begin{equation}\label{eqn:TM-projection}
(P_N(K))(a,b) = \begin{cases}
K(a,b) + \frac{K(a,N+1)K(N+1,b)}{1-K(N+1,N+1)}, &\text{ if } K(N+1,N+1) < 1, \\
K(a,0) + K(a,N+1), &\text{ else if } b = 0, \\
K(a,b), &\text{ else if } b \neq 0, \\
\end{cases}
\end{equation}
for $a,b\in\llbracket 0,N\rrbracket$, as well as $(P_N(K))(a,a) = 1$ for $a\in\N_0\setminus\llbracket 1,N\rrbracket$.
\end{lemma}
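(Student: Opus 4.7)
The plan is to compute $(P_N(K))(a,b)$ directly from its defining formula in \eqref{eqn:K-proj}, i.e., as the probability that the first time after time $0$ the chain $X$ (with law $\P_{K,a}$) visits $\llbracket 0,N\rrbracket$, it lands in state $b$. Since $K\in\transmat_{N+1}$, the chain $X$ only takes values in $\llbracket 0,N+1\rrbracket$, which makes the analysis of the ``excursion away from $\llbracket 0,N\rrbracket$'' especially simple: any such excursion must consist of a run of visits to the single state $N+1$. The result splits naturally according to whether $N+1$ is a proper or an absorbing state.

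First, fix $a \in \llbracket 0,N\rrbracket$; we decompose according to the value of $X(1)$, which must lie in $\llbracket 0,N\rrbracket \cup \{N+1\}$. If $X(1) = b' \in \llbracket 0,N\rrbracket$, then $I_{X,N}(1) = 1$ and $(P_N(X))(1) = b'$; this contributes $K(a,b)$ to $(P_N(K))(a,b)$. If instead $X(1) = N+1$, we use the strong Markov property at time $1$ to control the subsequent behavior of $X$ at state $N+1$.

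In the case $K(N+1,N+1) < 1$, conditionally on $X(1) = N+1$, the length of the run of consecutive visits to $N+1$ is geometric with parameter $1-K(N+1,N+1)$, and conditionally on leaving $N+1$ at some finite time $m+1$, the chain goes to state $c \in \llbracket 0,N\rrbracket$ with probability $K(N+1,c)/(1-K(N+1,N+1))$. Summing the resulting geometric series gives the contribution $K(a,N+1)\cdot K(N+1,b)/(1-K(N+1,N+1))$, which combined with the direct term $K(a,b)$ yields the first branch of \eqref{eqn:TM-projection}. In the case $K(N+1,N+1) = 1$, conditionally on $X(1) = N+1$, the chain is absorbed at $N+1$ forever, so $I_{X,N}(1) = \infty$ and $(P_N(X))(1) = 0$; this contributes $K(a,N+1)$ to $(P_N(K))(a,0)$ only, which combined with the direct term gives $K(a,0) + K(a,N+1)$ for $b=0$ and $K(a,b)$ for $b \neq 0$, matching the second and third branches.

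Finally, the identity $(P_N(K))(a,a) = 1$ for $a \in \N_0 \setminus \llbracket 1,N\rrbracket$ is already built into the definition \eqref{eqn:K-proj}, which sends such rows to the identity row. I do not anticipate any serious obstacle: the calculation is just a one-step conditioning combined with a geometric series, and the only subtlety is remembering to handle the degenerate case $K(N+1,N+1) = 1$ separately so as to avoid division by zero, where the ``geometric'' excursion becomes an absorbing one and hence gets sent to the cemetery.
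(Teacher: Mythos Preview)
Your proposal is correct and follows essentially the same route as the paper's proof: condition on the first step of the chain, sum a geometric series over the length of the run at $N+1$ when $K(N+1,N+1)<1$, and handle the absorbing case $K(N+1,N+1)=1$ separately by observing that the excursion never returns and thus gets sent to the cemetery. The only cosmetic difference is that the paper phrases the geometric sum directly rather than via the exit distribution $K(N+1,\cdot)/(1-K(N+1,N+1))$, but the computation is identical.
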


\begin{proof}
	First observe that the claimed form of $P_N(K)$ is indeed a transition matrix, and moreover that we have $P_N(K)\in\transmat_{N}$.
	Next, note that $a\in\N_0\setminus \llbracket 1,N\rrbracket$ implies $(P_N(K))(a,a)= 1$ by definition, so we only need to check \eqref{eqn:TM-projection} for $a,b\in\llbracket 1,N\rrbracket$.
	Now note, in particular, that $X_{N+1} = X$.
	Then, if $K(N+1,N+1) < 1$, we have:
	\begin{align*}
	(P_N(K))(a,b) &= \P_{K,a}(X_N(1) = b) \\
	&= \P_{K,a}(I_{X,N}(1) < \infty,X(I_{X,N}(1)) = b) \\
	&= K(a,b) + K(a,N+1)\sum_{k=1}^{\infty}(K(N+1,N+1))^kK(N+1,b) \\
	&= K(a,b) + \frac{K(a,N+1)K(N+1,b)}{1-K(N+1,N+1)} \\
	&= (P_N(K))(a,b),
	\end{align*}
	as needed.
	Similarly, if $K(N+1,N+1) = 1$, then we have
	\begin{align*}
	(P_N(K))(a,0) &= \P_{K,a}(X_N(1) = 0) \\
	&= \P_{K,a}(I_{X,N} = \infty) + \P_{K,a}(I_{X,N} ,\infty,X(I_{X,N}) = 0) \\
	&= K(a,0)+ K(a,N+1) \\
	&= (P_N(K))(a,0)
	\end{align*}
	and, for $b\in\llbracket 1,N\rrbracket$, we have
	\begin{align*}
	(P_N(K))(a,b) &= \P_{K,a}(X_N(1) = b) \\
	&= \P_{K,a}(I_{X,N} <\infty,X_N(I_{X,N}) = b) \\
	&= K(a,b) \\
	&= (P_N(K))(a,b).
	\end{align*}
	This proves the claim.
\end{proof}

\begin{remark}
The description of the map $P_{N}:\transmat_{N+1}\to\transmat_{N}$ can be made even more explicit by regarding elements $\transmat_{N}$ for $N\in\N$ as suitable block matrices.
More specifically, by a slight abuse of notation, we identify an element $K\in\transmat_{N+1}$ for $N\in\N$ as its top-left $(N+2)\times (N+2)$ submatrix; in this setting, if an element $K_{N+1}\in\transmat_{N+1}$ is identified with
\begin{equation}\label{eqn:TM-block-form}
\begin{pmatrix}
1 & 0 & 0 \\
w & A & u \\
q & v^{\text{T}} & p
\end{pmatrix},
\end{equation}
where $A\in [0,1]^{N\times N}$ is a matrix, $u,v,w\in [0,1]^{N}$ are column vectors, and $p,q\in [0,1]$ are real numbers, then the projection $P_{N}:\transmat_{N+1}\to\transmat_{N}$ is exactly
\begin{equation}\label{eqn:K-proj-formula}
\begin{pmatrix}
1 & 0 & 0 \\
w & A & u \\
q & v^{{\text{T}}} & p
\end{pmatrix} \mapsto \begin{cases}
\begin{pmatrix}
1 & 0 \\
w + \frac{q}{1-p}u& A + \frac{1}{1-p}uv^{\text{T}} \\
\end{pmatrix}, &\text{ if } p < 1, \\
& \\
\begin{pmatrix}
1 & 0 \\
w + u& A \\
\end{pmatrix}, &\text{ if } p = 1.
\end{cases}
\end{equation}
It does not appear that there is an analogous simple formula for the composite projection map $P_N\circ P_{N+1}:\transmat_{N+2}\to\transmat_{N}$ for $N\in\N$, nor for the general $P_N:\transmat\to\transmat_{N}$ for $N\in\N$.
\end{remark}

\begin{remark}
Note that, as $p\to 1$, we have
\begin{equation*}
K_3(p) := \begin{pmatrix}
1 & 0 & 0 \\
1-p & 0 & p \\
0 & 1-p & p
\end{pmatrix} \to \begin{pmatrix}
1 & 0 & 0 \\
0 & 0 & 1 \\
0 & 0 & 1 \\
\end{pmatrix} = K_3(0)
\end{equation*}
and
\begin{equation*}
P_2(K_3(p)) = \begin{pmatrix}
1 & 0 \\
1-p & p \\
\end{pmatrix} \to \begin{pmatrix}
1 & 0 \\
0 & 1 \\
\end{pmatrix} \neq \begin{pmatrix}
1 & 0 \\
1 & 0 \\
\end{pmatrix} = P_2(K_3(0)).
\end{equation*}
This example shows that the projections $P_N:\transmat\to\transmat_{N}$ for $N\in\N$ and the inclusion $\iota:\transmat\to\vtransmat$ can fail to be continuous, and also that $\vtransmat$ is not compact.
\end{remark}

\subsection{Virtual Initial Distributions}\label{subsec:VID}

While it was relatively straightforward to extend the notion of transition matrices to that of virtual transition matrices, the process of extending the notion of initial distributions (IDs) to an analogous notion for VMCs is somewhat more complicated.

\begin{definition}
	The set
	\begin{equation*}
	\vinitstate := \left\{\{y_N\}_{N\in\N}\in \prod_{N\in\N}\llbracket 0,N\rrbracket: \begin{matrix}
	\text{for all }N\in\N, \text{ either } \\ y_{N+1} = N+1  \text{ or } y_{N+1} =y_{N} 
	\end{matrix} \right\}
	\end{equation*}
	is called the space of \textit{virtual initial states (VISs)}.
\end{definition}

Observe that $\vinitstate$ is closed in $\prod_{N\in\N}\llbracket 0,N\rrbracket$, hence $\vinitstate$ is itself a compact Polish space.
For for $N\in\N$, we also define the truncated sets
\begin{equation*}
\vinitstate_{N} := \left\{\{y_N\}_{M=1}^{N}\in \prod_{M=1}^{N}\llbracket 0,M\rrbracket: \begin{matrix}
\text{for all }1 \le M < N, \text{ either } \\ y_{M+1} = M+1  \text{ or } y_{M+1} =y_{M} 
\end{matrix} \right\},
\end{equation*}
which are finite sets.
Observe that elements of $\vinitstate$ correspond to infinite sequences consisting of flat stretches and jumps, where a jump at index $N\in\N$ must jump to level $N\in\N$; elements of $\vinitstate_{N}$ for $N\in\N$ are the analogous finite sequences.

\begin{definition}
	A probability measure $\lambda$ on $\mathcal{M}_1(\vinitstate)$ is called \textit{Markovian} if for all $N\in\N$ and $\{a_{M}\}_{M=1}^{N+1} \in \vinitstate_{N+1}$ we have $\lambda(y_{N+1} = a_{N+1}\, |\, y_1 = a_{1},\ldots , y_{N} = a_{N}) = \lambda(y_{N+1} = a_{N+1}\, |\, y_{N} = a_{N})$.
	Then define the set $\vdist_{0} = \left\{\lambda \in \mathcal{M}_1(\vinitstate): \lambda \text{ is Markovian}\right\}$, called the space of \textit{virtual distributions (VDs)}.
\end{definition}

However, it turns out that the elements of $\vdist_{0}$ are in bijection with a simpler space which is much easier to work with.
To state this, consider the following:

\begin{definition}
The set
\begin{equation*}
\vdist_1: = \left\{\{\nu_N\}_{N\in\N}\in\prod_{N\in\N}\mathcal{M}_1(\llbracket 0,N\rrbracket): \nu_{N}(a) \ge \nu_{N+1}(a) \text{ for } N\in\N,a\in\llbracket 0,N\rrbracket\right\}
\end{equation*}
is, by abuse, also called the space of \textit{virtual distributions (VDs)}.
\end{definition}

Observe that elements of $\vdist_1$ are just sequences of marginal distributions, which a priori, do not determine a unique coupling on the entire product space.
The main result of this subsection is the following, which establishes the useful fact that $\vdist_0$ and $\vdist_1$ are two ways of viewing the same object.

\begin{proposition}\label{prop:D0-D1-bijection}
There is a homeomorphism from $\vdist_0$ to $\vdist_1$ given by sending each probability distribution to its sequence of marginal distributions.
\end{proposition}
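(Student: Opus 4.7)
The plan is to verify that the natural marginalization map $\Phi:\vdist_0 \to \vdist_1$ sending $\lambda \mapsto \{\nu_N\}_{N\in\N}$, where $\nu_N(a) := \lambda(\{y\in\vinitstate: y_N = a\})$, is a continuous bijection, and then to upgrade this to a homeomorphism via the elementary principle that a continuous bijection from a compact space to a Hausdorff space is automatically a homeomorphism. Continuity of $\Phi$ follows immediately from the Portmanteau theorem: each cylinder $\{y_N = a\}$ is clopen in $\vinitstate$ (which embeds into the countable product of finite discrete spaces $\prod_M \llbracket 0, M\rrbracket$), so $\lambda \mapsto \lambda(y_N = a)$ is weakly continuous. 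Well-definedness into $\vdist_1$ uses the structural observation that for $a \in \llbracket 0, N\rrbracket$ the constraint $y_{N+1} \in \{y_N, N+1\}$, combined with $a \ne N+1$, forces $\{y_{N+1} = a\} \subseteq \{y_N = a\}$, and so $\nu_{N+1}(a) \le \nu_N(a)$.

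For injectivity, a Borel probability measure on $\vinitstate$ is determined by its values on cylinders $\{y_1=a_1, \ldots, y_N = a_N\}$ with $(a_1, \ldots, a_N) \in \vinitstate_N$, and the Markov property factorizes each such probability as $\nu_1(a_1)\prod_M \lambda(y_{M+1}=a_{M+1} \,|\, y_M = a_M)$. So it suffices to recover each two-step joint probability from the $\nu_N$, which the same structural observation yields directly: for $a \in \llbracket 0, N\rrbracket$ one has $\{y_N = a, y_{N+1} = a\} = \{y_{N+1} = a\}$ of probability $\nu_{N+1}(a)$, while $\{y_N = a, y_{N+1} = N+1\}$ has probability $\nu_N(a) - \nu_{N+1}(a)$.

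For surjectivity, given $\{\nu_N\} \in \vdist_1$ I construct the corresponding Markovian measure by specifying transition kernels $p_M(a, a) := \nu_{M+1}(a)/\nu_M(a)$ and $p_M(a, M+1) := 1 - p_M(a, a)$ for $a \in \llbracket 0, M\rrbracket$ with $\nu_M(a) > 0$, extending by convention (say $p_M(a, M+1) := 1$) when $\nu_M(a) = 0$. Kolmogorov extension on $\prod_M \llbracket 0, M\rrbracket$ produces a Markov measure whose finite-dimensional distributions are supported on $\vinitstate_N$, hence on $\vinitstate$. An induction on $N$ shows that the $N$-th marginal of $\lambda$ coincides with $\nu_N$: for $a \in \llbracket 0, N\rrbracket$ with $\nu_N(a) > 0$, one gets $\lambda(y_{N+1} = a) = \lambda(y_N = a)\, p_N(a, a) = \nu_{N+1}(a)$, and the degenerate case $\nu_N(a) = 0$ forces $\nu_{N+1}(a) = 0$ via the compatibility constraint so both sides vanish; the remaining case $a = N+1$ follows from $\sum_b \nu_{N+1}(b) = 1$.

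Finally, to conclude I show $\vdist_0$ is compact, which together with the continuous bijection $\Phi$ onto the Hausdorff space $\vdist_1$ (a subset of a product of finite-dimensional simplices) yields the homeomorphism. Since $\vinitstate$ is compact Polish, $\mathcal{M}_1(\vinitstate)$ is weakly compact, so it suffices to check $\vdist_0$ is closed. I rewrite the Markov condition in cross-multiplied form,
\begin{equation*}
\lambda(y_1 = a_1, \ldots, y_{N+1} = a_{N+1}) \cdot \lambda(y_N = a_N) = \lambda(y_N = a_N, y_{N+1} = a_{N+1}) \cdot \lambda(y_1 = a_1, \ldots, y_N = a_N),
\end{equation*}
and note that both sides are continuous in $\lambda$ by Portmanteau applied to the clopen cylinders, so $\vdist_0$ is a countable intersection of closed sets. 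The main obstacle in the whole argument is the surjectivity step, particularly ensuring that the marginal-matching induction is genuinely unambiguous even where the transition kernel is pinned down only by convention at points of vanishing marginal mass.
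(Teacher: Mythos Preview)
Your proof is correct and follows essentially the same route as the paper's: well-definedness via the inclusion $\{y_{N+1}=a\}\subseteq\{y_N=a\}$, injectivity via the Markov factorization of cylinder probabilities, surjectivity via Kolmogorov extension from explicitly constructed one-step kernels, and the compact-to-Hausdorff trick to upgrade the continuous bijection to a homeomorphism. The only noteworthy addition is your cross-multiplied reformulation of the Markov condition to verify that $\vdist_0$ is closed in $\mathcal{M}_1(\vinitstate)$, which the paper asserts without argument.
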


\begin{proof}
Note that for $\lambda\in \vdist_{0}$ and any $N\in\N$ and $a\in\llbracket 0,N\rrbracket$, we have
\begin{align*}
\lambda(y_{N} = a) &= \lambda(y_{N} = a,y_{N+1} = a) +\lambda(y_{N} = a,y_{N+1} = N+1) \\
&\ge \lambda(y_{N} = a,y_{N+1} = a) \\
&= \lambda(y_{N+1} = a).
\end{align*}
Thus there is a well-defined map $\phi:\vdist_{0}\to \vdist_1$ sending each element $\lambda\in\vdist_0$ to $\phi(\lambda)$ its sequence of marginal distributions in $\vdist_1$.

Next, let us show that $\phi$ is injective.
Take $\lambda,\lambda'\in\vdist_{0}$ and write $\phi(\lambda) = \{\nu_N\}_{N\in\N}$ and $\phi(\lambda') = \{\nu'_N\}_{N\in\N}$, and suppose $\phi(\lambda) = \phi(\lambda')$.
For $N\in\N$ and $\{a_M\}_{M=1}^{N}\in\vinitstate_{N}$ write
\begin{equation*}
E_N(\{a_M\}_{M=1}^{N}) := \{\{y_N\}_{N\in\N}\in\vinitstate: y_1= a_1,\ldots, y_N= a_N  \},
\end{equation*}
and note that
\begin{equation*}
\mathcal{P} := \left\{E_N(\{a_M\}_{M=1}^{N}) : N\in\N, \{a_M\}_{M=1}^{N}\in\vinitstate_{N} \right\}
\end{equation*}
is a $\pi$-system in $\vinitstate$ which generates the Borel $\sigma$-algebra $\mathcal{B}(\vinitstate)$.
So, in order to show $\lambda = \lambda'$ it suffices to show that they agree for all elements of $\mathcal{P}$.
To do this, we'll show by induction on $N\in\N$ that they agree for all elements of
\begin{equation*}
\mathcal{P}_N := \left\{E_N(\{a_M\}_{M=1}^{N}) :\{a_M\}_{M=1}^{N}\in\vinitstate_{N} \right\}.
\end{equation*}
The base case of $N=1$ is immediate since $\vinitstate_{1} = \llbracket 0,1\rrbracket$.
Before we perform the inductive step, we note that, for any $N\in\N$ and $a\in\llbracket 0,N\rrbracket$, we have
\begin{equation}\label{eqn:phi-surj-1}
\begin{split}
\lambda(y_{N+1} = a|y_N = a) &= \frac{\lambda(y_{N+1} = a,y_N = a)}{\lambda(y_N = a)} \\
&= \frac{\lambda(y_{N+1} = a)}{\lambda(y_N = a)} \\
&= \frac{\lambda'(y_{N+1} = a)}{\lambda'(y_N = a)} \\
&= \frac{\lambda'(y_{N+1} = a,y_N = a)}{\lambda'(y_N = a)} = \lambda'(y_{N+1} = a|y_N = a),
\end{split}
\end{equation}
and, similarly,
\begin{equation}\label{eqn:phi-surj-2}
\begin{split}
\lambda(y_{N+1} = N+1|y_N = a) &= \frac{\lambda(y_{N+1} = N+1,y_N = a)}{\lambda(y_N = a)} \\
&= \frac{\lambda(y_N=a)-\lambda(y_{N+1} = a,y_N = a)}{\lambda(y_N = a)} \\
&= \frac{\lambda(y_N=a)-\lambda(y_{N+1} = a)}{\lambda(y_N = a)} \\
&= \frac{\lambda'(y_N=a)-\lambda'(y_{N+1} = N+1)}{\lambda'(y_N = a)} \\
&= \frac{\lambda'(y_N=a)-\lambda'(y_{N+1} = a,y_N = a)}{\lambda'(y_N = a)} \\
&= \frac{\lambda'(y_{N+1} = N+1,y_N = a)}{\lambda'(y_N = a)} \\
&= \lambda'(y_{N+1} = N+1|y_N = a).
\end{split}
\end{equation}
Now for the inductive step.
Take any $N\in\N$ and any $\{a_M\}_{M=1}^{N+1}\in\vinitstate_{N+1}$.
If $a_N = a_{N+1}$, then we use \eqref{eqn:phi-surj-1} and the inductive hypothesis to compute:
\begin{align*}
&\lambda(y_1 = a_1,\ldots, y_N = a_N,y_{N+1} = a_{N}) \\
&= \lambda(y_{N+1}=a_{N}\, |\,y_N = a_N)\lambda(y_1 = a_1,\ldots y_N = a_N) \\
&= \lambda'(y_{N+1}=a_{N}\,|\,y_N = a_N)\lambda'(y_1 = a_1,\ldots y_N = a_N) \\
&= \lambda'(y_1 = a_1,\ldots, y_N = a_N,y_{N+1} = a_{N}).
\end{align*}
Otherwise we have $a_{N+1} =N+1$, so we use \eqref{eqn:phi-surj-2} and the inductive hypothesis to compute:
\begin{align*}
&\lambda(y_1 = a_1,\ldots, y_N = a_N,y_{N+1} = N+1) \\
&= \lambda(y_{N+1}=N+1\,|\,y_N = a_N)\lambda(y_1 = a_1,\ldots, y_N = a_N) \\
&= \lambda'(y_{N+1}=N+1\,|\,y_N = a_N)\lambda'(y_1 = a_1,\ldots, y_N = a_N) \\
&= \lambda'(y_1 = a_1,\ldots, y_N = a_N,y_{N+1} = N+1).
\end{align*}
This completes the induction and shows that $\lambda$ and $\lambda'$ agree on $\mathcal{P}_N$, hence on all of $\mathcal{P}$, hence on all of $\mathcal{B}(\vinitstate)$.
Therefore, $\lambda = \lambda'$, so $\phi$ is injective.

The next step is to show that $\phi$ is surjective.
Take any $\boldsymbol{\nu} = \{\nu_N\}_{N\in\N}\in \vdist_1$, and let us construct a $\lambda\in\vdist_{0}$ such that $\phi(\lambda) = \boldsymbol{\nu}$.
To do this , we recursively define a sequence $\{\lambda_{N}\}_{N\in\N}$ with $\lambda_{N}\in \mathcal{M}_1(\prod_{M=1}^{N}\llbracket 0,M \rrbracket)$ for each $N\in\N$.
To start, we set $\lambda_1 = \nu_1$ on $\mathcal{M}_1(\llbracket 0,1\rrbracket)$.
Then, for $N\in\N$ define $\lambda_{N+1}\in \mathcal{M}_1(\prod_{M=1}^{N+1}\llbracket 0,M \rrbracket)$ via
\begin{align*}
&\lambda_{N+1}(y_1 = a_1,\ldots, y_{N+1} = a_{N+1}) \\
&= \begin{cases}
\frac{\nu_{N+1}(a_{N})}{\nu_{N}(a_{N})}\lambda_{N}(y_1 = a_1,\ldots, y_{N} = a_{N}), &\text{ if } a_{N+1} = a_N, \\
\frac{\nu_{N}(a_{N})-\nu_{N+1}(a_{N})}{\nu_{N}(a_{N})}\lambda_{N}(y_1 = a_1,\ldots, y_{N} = a_{N}), &\text{ if } a_{N+1} = N+1, \\
0, &\text{ if } a_{N+1}\notin \{a_N,N+1\}.
\end{cases}
\end{align*}
Note that the definition of $\vdist_1$ guarantees $\nu_{N}(a_{N})-\nu_{N+1}(a_{N})\ge 0$, hence these are well-defined probabilities.
We claim that $\{\lambda_{N}\}_{N\in\N}$ is projective, that is, that we have $\lambda_{N+1}(\cdot \times \llbracket 0,N+1\rrbracket) = \lambda_{N}(\cdot)$ as measures on $\prod_{M=1}^{N}\llbracket 0,M\rrbracket$; since $\prod_{M=1}^{N}\llbracket 0,M\rrbracket$ is a finite set, it suffices to check that the two measures assign the same value to each point.
For $N=1$ this follows since for any $a_1 \in \llbracket 0,1\rrbracket$ we have
\begin{align*}
\lambda_2(y_1 = a_1) &= \lambda_2(y_1 = a_1,y_2 = a_1) + \lambda_2(y_1 = a_1,y_2 = 2) \\
&\qquad= \frac{\nu_2(a_1)}{\nu_1(a_1)}\nu_1(a_1) + \frac{\nu_1(a_1)-\nu_2(a_1)}{\nu_1(a_1)}\nu_1(a_1) = \nu_1(a_1)
\end{align*}
by construction.
For general $N\in\N$, this follows since for any $\{a_M\}_{M=1}^{N}\in \prod_{M=1}^{N}\llbracket 0,M \rrbracket\setminus\vinitstate_{N}$ we have
\begin{align*}
&\lambda_{N+1}(y_1 = a_1,\ldots, y_{N} = a_{N}) \\
&= \lambda_{N}(y_1 = a_1,\ldots, y_N = a_N) = 0.
\end{align*}
Moreover, for any $N\in\N$ and $\{a_M\}_{M=1}^{N}\in \vinitstate_{N}$ we have
\begin{align*}
&\lambda_{N+1}(y_1 = a_1,\ldots, y_{N} = a_{N}) \\
&\qquad= \lambda_{N+1}(y_1 = a_1,\ldots, y_{N} = a_{N}, y_{N+1} = a_{N+1}) \\
&\qquad\qquad+ \lambda_{N+1}(y_1 = a_1,\ldots, y_{N} = a_{N}, y_{N+1}=N+1) \\
&\qquad= \frac{\nu_{N+1}(a_{N})}{\nu_{N}(a_{N})}\lambda_{N}(y_1 = a_1,\ldots, y_{N} = a_{N}) \\
&\qquad\qquad+ \frac{\nu_{N}(a_{N}) - \nu_{N+1}(a_{N})}{\nu_{N}(a_{N})}\lambda_{N}(y_1 = a_1,\ldots, y_{N} = a_{N}) \\
&\qquad= \lambda_{N}(y_1 = a_1,\ldots, y_N = a_N).
\end{align*}
Thus, by \cite[Theorem~6.14]{Kallenberg}, there is a measure $\lambda$ on $\prod_{N\in\N}\llbracket 0,N\rrbracket$ whose projection onto $\prod_{M=1}^{N}\llbracket 0,M\rrbracket$ is $\lambda_N$ for each $N\in\N$.

Next we claim that $\lambda(\vinitstate) = 1$.
To do this, we first prove $\lambda_{N}(\vinitstate_{N}) = 1$ for each $N\in\N$.
We proceed by induction, and we note that the base case of $N=1$ is immediate by construction.
For the inductive step, let $N\in\N$ be arbitrary, and take any $\{a_M\}_{M=1}^{N+1}\in \prod_{M=1}^{N+1}\llbracket 0,M \rrbracket\setminus\vinitstate_{N+1}$.
If $a_{M+1}\notin \{a_{M},M+1\}$ for some $M\in \llbracket 0,N-1\rrbracket$, then
\begin{align*}
\lambda_{N+1}(y_1=a_1,\ldots, y_{N+1} = a_{N+1}) &\le \lambda_{N+1}(y_1=a_1,\ldots, y_{M+1} = a_{M+1}) \\
&= \lambda_{M+1}(y_1=a_1,\ldots, y_{M+1} = a_{M+1}) = 0,
\end{align*}
by projectivity and the induction hypothesis, and, if $a_{N+1}\notin \{a_{N},N+1\}$, then
\begin{equation*}
\lambda_{N+1}(y_1=a_1,\ldots, y_{N+1} = a_{N+1}) = 0
\end{equation*}
by the definition of $\lambda_{N+1}$.
This completes the induction.
Then note that $\vinitstate = \bigcap_{N\in\N}(\vinitstate_N\times \prod_{M=N+1}^{\infty}\llbracket 0,M \rrbracket)$, so we have
\begin{align*}
\lambda(\vinitstate) &= \lambda\left(\bigcap_{N\in\N}\left(\vinitstate_N\times \prod_{M=N+1}^{\infty}\llbracket 0,M \rrbracket\right)\right) \\
&= \lim_{N\to\infty}\lambda\left(\vinitstate_N\times \prod_{M=N+1}^{\infty}\llbracket 0,M \rrbracket\right) = \lim_{N\to\infty}\boldsymbol{\lambda}_N(\vinitstate_N) = 1,
\end{align*}
as needed.

Next, we claim that $\lambda$ is Markovian.
That is, we need to show
\begin{equation*}
\lambda(y_{N+1}=a_{N+1}|y_1 = a_1,\ldots, y_{N}=a_N) = \lambda(y_{N+1}=a_{N+1}|y_{N}=a_N) 
\end{equation*}
for all $N\in\N$ and $\{a_M\}_{M=1}^{N+1}$.
If $a_{N+1}\notin\{a_N,N+1\}$, then this follows from $\lambda(\vinitstate) = 1$ since both sides are equal to zero.
Thus, if we show the claim for $a_{N+1}=a_N$, then it automatically holds for $a_{N+1} = N+1$ by taking complements.
Therefore it suffices to show the desired identity for $a_{N+1} = a_{N}$, and, to do this, show that both sides are equal to $\nu_{N+1}(a_N)/\nu_{N}(a_N)$.
On the one hand, we compute, using projectivity and the definition of $\{\lambda_{N}\}_{N\in\N}$:
\begin{align*}
&\lambda(y_{N+1}=a_{N}\,|\,y_1 = a_1,\ldots, y_{N}=a_N) \\
&\qquad= \frac{\lambda(y_1 = a_1,\ldots, y_{N} = a_{N}, y_{N+1} = a_{N})}{\lambda(y_1 = a_1,\ldots, y_{N} = a_{N})} \\
&\qquad= \frac{\lambda_{N+1}(y_1 = a_1,\ldots, y_{N} = a_{N}, y_{N+1} = a_{N})}{\lambda_{N}(y_1 = a_1,\ldots, y_{N} = a_{N})} = \frac{\nu_{N+1}(a_N)}{\nu_N(a_N)}.
\end{align*}
On the other hand, we compute, by summing over all $\{a_M'\}_{M = 1}^{N}$ with $a_N'=a_N$ and using projectivity and the definition of $\{\lambda_{N}\}_{N\in\N}$:
\begin{align*}
&\lambda(y_{N+1}=a_{N}\,|\,y_{N}=a_N) \\
&\qquad= \frac{\lambda(y_{N} = a_{N}, y_{N+1} = a_{N})}{\lambda(y_{N} = a_{N})} \\
&\qquad= \frac{\lambda_{N+1}(y_{N} = a_{N}, y_{N+1} = a_{N})}{\lambda_N(y_{N} = a_{N})} \\
&\qquad= \frac{\sum\lambda_{N+1}(y_1 = a_1',\ldots, y_{N} = a_{N}', y_{N+1} = a_{N})}{\sum\lambda_{N}(y_1 = a_1',\ldots, y_{N} = a_{N}')} \\
&\qquad= \frac{\sum\frac{\nu_{N+1}(a_N)}{\nu_N(a_N)}\lambda_{N+1}(y_1 = a_1',\ldots, y_{N} = a_{N}')}{\sum\lambda_{N}(y_1 = a_1',\ldots, y_{N} = a_{N}')} \\
&\qquad= \frac{\nu_{N+1}(a_N)}{\nu_N(a_N)}.
\end{align*}
This shows that $\lambda$ is indeed Markovian.

Finally, we claim that $\phi(\lambda) = \boldsymbol{\nu}$, that is, that
\begin{equation*}
\lambda\left(\prod_{M=1}^{N-1}\llbracket 0,M\rrbracket\times \ \cdot\ \times \prod_{M={N+1}}^{\infty}\llbracket 0,M\rrbracket\right) = \nu_N(\cdot)
\end{equation*}
holds as measures on $\llbracket 0,N \rrbracket$ for each $N\in\N$.
By projectivity, it suffices to show that
\begin{equation*}
\lambda_N\left(\prod_{M=1}^{N-1}\llbracket 0,M\rrbracket\times \ \cdot\right) = \nu_N(\cdot)
\end{equation*}
holds as measures on $\llbracket 0,N \rrbracket$ for each $N\in\N$.
Since this is a finite set, it suffices to check that both measures assign the same value to each element.
We proceed by induction, and again we note that the base case of $N=1$ is immediate.
For the inductive step, let $N\in\N$ be arbitrary.
Note by summing and taking complements that it further suffices to prove the claim only for $a_{N+1} \in \llbracket 0,N\rrbracket$.
To do this, we sum over all $\{a_M'\}_{M=1}^{N}\in\vinitstate_{N}$ with $a_{N}' = a_{N+1}$, and apply the definition of $\lambda_{N+1}$ and the inductive hypothesis:
\begin{align*}
&\lambda_{N+1}(y_{N+1} = a_{N+1}) \\
&= \sum \lambda_{N+1}(y_1=a_1',\ldots, y_N = a_N', y_{N+1}=a_{N+1}) \\
&= \sum \frac{\nu_{N+1}(a_{N+1})}{\nu_{N}(a_{N+1})}\lambda_{N}(y_1=a_1',\ldots, y_{N-1}=a_{N-1}',y_N = a_{N+1}) \\
&= \frac{\nu_{N+1}(a_{N+1})}{\nu_{N}(a_{N+1})}\sum\lambda_{N}(y_1=a_1',\ldots, y_{N-1}=a_{N-1}',y_N = a_{N+1}) \\
&= \frac{\nu_{N+1}(a_{N+1})}{\nu_{N}(a_{N+1})}\lambda_{N}(y_N = a_{N+1}) \\
&= \frac{\nu_{N+1}(a_{N+1})}{\nu_{N}(a_{N+1})}\nu_{N}(a) \\
&= \nu_{N+1}(a_{N+1}).
\end{align*}
This completes the induction and proves $\phi(\lambda) = \boldsymbol{\nu}$.
Thus, $\phi$ is a bijection.

Finally, note that $\phi$ is obviously continuous.
Moreover, $\vdist_0$ is compact and $\vdist_1$ is Hausdorff, so it is classical that $\phi$, being a continuous bijection, is actually a homeomorphism.
\end{proof}

\begin{example}
For $N\in\N$, write $\uniform\llbracket 1,N\rrbracket$ for the uniform measure on $\llbracket 1,N\rrbracket$.
Then consider the sequence $\vuniform = \{\uniform\llbracket 1,N\rrbracket\}_{N\in\N}$.
For any $N\in\N$ we clearly have $\uniform\llbracket 1,N\rrbracket(a) = \frac{1}{N}> \frac{1}{N+1} = \uniform\llbracket 1,N+1\rrbracket(a)$ when $a\in\llbracket 1,N\rrbracket$, and also $\uniform\llbracket 1,N\rrbracket(0) = 0 = \uniform\llbracket 1,N+1\rrbracket(0)$.
Thus we have $\vuniform\in\vdist_1$, and an illustrative example is to exactly identify the measure $\phi^{-1}(\vuniform)\in\vdist_{0}$.

To do this, we note that there is a simple correspondence between $\vinitstate$-valued random variables $\mathbf{a} = \{a_N\}_{N\in\N}$ and sequences of $\{0,1\}$-valued random variables $\{H_N\}_{N\in\N}$, given by
\begin{equation}\label{eqn:nu-H-correspondence}
H_{N+1} = \begin{cases}
0, &\text{ if } a_{N+1} = a_{N}, \\
1, &\text{ if } a_{N+1} = N+1,
\end{cases}\qquad
a_{N+1} = \begin{cases}
a_{N}, &\text{ if } H_N = 0 ,\\
N+1, &\text{ if } H_N = 1.
\end{cases}
\end{equation}
We use this by letting $(\Omega,\F,\P)$ be a probability space on which is defined a sequence $\{H_N\}_{N\in\N}$ of independent random variables with with $H_N = \text{Ber}(\frac{1}{N})$ for all $N\in\N$; then let $\mathbf{a} = \{a_N\}_{N\in\N}$ be defined as above, and set $\lambda = \P\circ\mathbf{a}^{-1}$.
Of course for any $N\in\N$ and $\{a_M\}_{M=1}^{N}\in\vinitstate_{N}$, the values $\lambda(y_{N+1}=N+1\,|\,y_{1}=a_1,\ldots, y_{N} = a_{N})$ and $\lambda(y_{N+1}=N+1|y_{N} = a_{N})$ are both equal to $\P(H_{N+1}) = \frac{1}{N+1}$.
Taking complements shows that $\lambda(y_{N+1}=a_{N}\,|\,y_{1}=a_1,\ldots, y_{N} = a_{N})$ and $\lambda(y_{N+1}=a_{N}\,|\,y_{N} = a_{N})$ are also equal.
Hence we have $\lambda(y_{N+1}=a_{N+1}\,|\,y_{1}=a_1,\ldots, y_{N} = a_{N})$ and $\lambda(y_{N+1}=a_{N+1}\,|\,y_{N} = a_{N})$ for all $N\in\N$ and $\{a_M\}_{M=1}^{N+1}\in\vinitstate_{N+1}$, so $\lambda$ is Markovian.

We claim that $\phi(\lambda) = \{\nu_N\}_{N\in\N}$ equals $\vuniform=\{\uniform\llbracket 1,N\rrbracket\}_{N\in\N}$.
To see this, we use induction on $N\in\N$, wherein the base case of $N=1$ is clear.
For the inductive step, let $N\in\N$ be arbitrary.
Note that by summing and taking complements it suffices to show $\nu_{N+1}(a) = \frac{1}{N+1}$ for all $a\in \llbracket 1,N+1\rrbracket$.
Indeed, this follows by independence:
\begin{align*}
\nu_{N+1}(j) &= \P(a_{N+1} = j) \\
&= \P(a_{N} = j, H_{N+1} = 0) \\
&= \P(a_{N} = j)\P(H_{N+1} = 0) = \frac{1}{N}\cdot \frac{N}{N+1} = \frac{1}{N+1}.
\end{align*}
This completes the induction and proves $\phi(\lambda) = \vuniform$.
We call $\vuniform=\{\uniform\llbracket 1,N\rrbracket\}_{N\in\N}\in\vdist$ the \textit{virtual uniform measure}.
In the following section we will study $\vuniform$ and a collection of related VMCs more carefully.
\end{example}

\subsection{Compatibility}\label{subsec:compatibility}
As we have hinted at above, VDs are important since they will generalize the notion of initial distributions for VMCs.
However, not all VDs can serve as an ``initial distribution'' for a VMC.
The reason for this phenomenon is that, for a VMC $\mathbf{X} = \{X_N\}_{N\in\N}$, each chain $X_N$ for $N\in\N$ progresses at its own rate; one way to inrerpret this is that VMCs ``have no natural time-scale''.
Thus, for $i\in\N$, the slice of values $\{X_N(i)\}_{N\in\N}$ does not, in general, represent a collection of values at the same point in time.
The exception is $i=0$, since all of the chains agree that this is the initial time.
Hence, at time $i=0$ we require extra information about the coupling of the process at all levels.
This leads us to study an important notion of compatibility between VDs and VTMs.

\begin{definition}
Take any $\boldsymbol{\nu} = \{\nu_N\}_{N\in\N}\in\vdist_1$ and $\mathbf{K} = \{K_N\}_{N\in\N}\in\vtransmat$.
We say that $\boldsymbol{\nu}$ is \textit{compatible with} $\mathbf{K}$, or that the pair $(\boldsymbol{\nu},\mathbf{K})$ is \textit{compatible}, if we have 
\begin{equation}\label{eqn:VID-VTM-compatibility}
\nu_N(a) = \begin{cases}
\nu_{N+1}(a) + \nu_{N+1}(N+1)\frac{K_{N+1}(N+1,a)}{1-K_{N+1}(N+1,N+1)}, &\text{ if } K_{N+1}(N+1,N+1) < 1, \\
\nu_{N+1}(0)+\nu_{N+1}(N+1), &\text{ else if } a = 0, \\
\nu_{N+1}(a), &\text{ else if } a \neq 0, \\
\end{cases}
\end{equation}
for all $N\in\N$ and $a\in\llbracket 0,N\rrbracket$.
\end{definition}

\begin{remark}
As in the case of the projection operation for transition matrices, we can make the notion of compatibility slightly more concrete by viewing the elements of $\transmat$ as suitable block matrices.
Again let us for each $N\in\N$ identify $K_{N+1}\in \transmat_{N+1}$ with the block matrix
\begin{equation}\tag{\ref{eqn:TM-block-form}}
\begin{pmatrix}
1 & 0 & 0 \\
w & A & u \\
q & v^{\text{T}} & p
\end{pmatrix},
\end{equation}
for $A\in[0,1]^{N\times N}, u,v,w\in[0,1]^{N}$, and $p,q\in[0,1]$, and let us also identify $\nu_{N+1}$ with the column vector
\begin{equation}
\begin{pmatrix}
z \\
r
\end{pmatrix}
\end{equation}
for $z\in[0,1]^{N+1}$ and $r\in[0,1]$,
Then compatibility says that the column vector $\nu_{N}\in [0,1]^{N+1}$ must satisfy the identity
\begin{equation*}
\nu_N =
\begin{cases}
z+\frac{r}{1-p}
\begin{pmatrix}
q  \\
v
\end{pmatrix}, &\text{ if } p < 1, \\
z + \begin{pmatrix}
r \\ 0
\end{pmatrix}, &\text{ if } p = 1.
\end{cases}
\end{equation*}
\end{remark}

The goal of the next few results is to show that every VMC is uniquely described by a compatible pair in $\vdist_1\times \vtransmat$.
The first direction is simple:

\begin{lemma}\label{lem:VIS-VTM-from-VMC}
	Suppose $\mathbf{X} = \{X_N\}_{N\in\N}$ is a VMC on a probability space $(\Omega,\F,\P)$.
	Then, the sequence $\boldsymbol{\nu} = \{\P\circ X_N(0)^{-1}\}_{N\in\N}$ is a VD, the sequence $\mathbf{K} = \{K_N\}_{N\in\N}$ defined via
	\begin{equation}\label{eqn:VTM-from-VMC}
	K_N(a,b) =\begin{cases}
	\P(X_N(1) = b\,|\,X_N(0)=a), &\text{ if } a,b\in\llbracket 0,N \rrbracket, \\
	1, &\text{ else if } a = b, \\
	0, &\text{ else if } a \neq b, \\
	\end{cases} 
	\end{equation}
	for $N\in\N$ is a VTM, and the pair $(\boldsymbol{\nu},\mathbf{K})$ is compatible.
\end{lemma}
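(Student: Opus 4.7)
The plan is to leverage the fact that, since $\mathbf{X}$ is a VMC, we have $X_N = P_N(X_{N+1})$ holding $\P$-almost surely for every $N \in \N$, and to translate this single path-level identity into each of the three claims, using the explicit formulas already derived for the action of $P_N$ on transition matrices.

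First I would handle $\boldsymbol{\nu} \in \vdist_1$. For any $a \in \llbracket 0,N\rrbracket$, the event $\{X_{N+1}(0) = a\}$ is contained in $\{X_N(0) = a\}$ a.s.: if $a \in \llbracket 1,N\rrbracket$ then $I_{X_{N+1},N}(0) = 0$ and hence $X_N(0) = X_{N+1}(0) = a$, while for $a = 0$ absorption at $0$ propagates the equality. Taking probabilities yields the pointwise monotonicity $\nu_{N+1}(a) \le \nu_N(a)$ that defines $\vdist_1$.

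Next I would establish $\mathbf{K} \in \vtransmat$. Each $K_N$ is manifestly in $\transmat_N$ by construction; on any row where $\P(X_N(0) = a) = 0$ the conditional probability \eqref{eqn:VTM-from-VMC} is not uniquely defined, so I would simply fix the version agreeing with $(P_N(K_{N+1}))(a,\cdot)$ on such rows. To verify $P_N(K_{N+1}) = K_N$, I would mimic the forward direction of Proposition~\ref{prop:VMC-MC-characterization}: decompose $\{X_N(1) = b\}$ on $\{X_N(0) = a\}$ according to whether the next step of $X_{N+1}$ from $a$ lands in $\llbracket 0,N\rrbracket$ or at $N+1$, and in the latter case apply the strong Markov property at the geometric escape time from $N+1$. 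Summing the resulting geometric series reproduces exactly the explicit formula \eqref{eqn:TM-projection} with $K = K_{N+1}$, that is, $(P_N(K_{N+1}))(a,b) = K_N(a,b)$.

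Finally I would verify the compatibility identity \eqref{eqn:VID-VTM-compatibility}. Starting from the almost-sure partition
\begin{equation*}
\{X_N(0) = a\} = \{X_{N+1}(0) = a\}\ \sqcup\ \{X_{N+1}(0) = N+1,\ X_N(0) = a\},
\end{equation*}
the first atom contributes $\nu_{N+1}(a)$. On the second atom, $X_N(0)$ is by definition the first state in $\llbracket 0,N\rrbracket$ that $X_{N+1}$ visits after leaving $N+1$. When $K_{N+1}(N+1,N+1) < 1$, the strong Markov property at the first jump out of $N+1$ together with the geometric sum $\sum_{k\ge 0}K_{N+1}(N+1,N+1)^k = (1-K_{N+1}(N+1,N+1))^{-1}$ identifies this contribution as $\nu_{N+1}(N+1)\,K_{N+1}(N+1,a)/(1-K_{N+1}(N+1,N+1))$; when $K_{N+1}(N+1,N+1) = 1$, the chain is trapped at $N+1$, so $X_N$ stays at $0$ forever, producing $\nu_{N+1}(N+1)$ for $a = 0$ and $0$ otherwise. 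The resulting expressions reproduce the three branches of \eqref{eqn:VID-VTM-compatibility} line for line. The only real obstacle I anticipate is the minor bookkeeping issue with rows of $K_N$ that $X_N$ never visits; the convention adopted in part two resolves it cleanly, and everything else is a routine application of the strong Markov property and the excursion decomposition at $N+1$.
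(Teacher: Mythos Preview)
Your proposal is correct and follows essentially the same route as the paper's own proof: the same inclusion argument for $\boldsymbol{\nu}\in\vdist_1$, the same strong-Markov-plus-geometric-series verification of $P_N(K_{N+1})=K_N$ (with the paper first using the strong Markov property to reduce from $\{X_N(0)=a\}$ to $\{X_{N+1}(0)=a\}$ before the excursion decomposition), and the same two-atom partition for compatibility. Your explicit convention for rows with $\P(X_N(0)=a)=0$ is a small but welcome addition that the paper simply glosses over.
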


\begin{proof}
First let's show $\boldsymbol{\nu}\in\vdist_1$.
To do this, let $N\in\N$ be arbitrary.
Clearly, $\P\circ X_N(0)^{-1}$ is a probability measure on $\llbracket 0,N\rrbracket$.
Moreover, note that $X_{N+1}(0) < N+1$ implies $I_{X_{N+1},N}(0) = \inf\{i\in\N_0: X_{N+1}(0) \in \llbracket 0,N\rrbracket \} = 0$, hence $X_{N}(0) = X_{N+1}(I_{X_{N+1},N}(0)) = X_{N+1}(0)$.
Thus, for any $a\in\llbracket 0,N\rrbracket$ we have
\begin{align*}
&\P(X_N(0) = a) \\
&= \P(X_N(0) = a,X_{N+1}(0) = a) + \P(X_N(0) = a,X_{N+1}(0) = N+1) \\
&\ge \P(X_N(0) = a,X_{N+1}(0) = a) \\
&= \P(X_{N+1}(0) = a),
\end{align*}
as needed.

Next, we need to show $\mathbf{K}\in\vtransmat$.
To do this, let $N\in\N$ be arbitrary, and note that $K_N$ is clearly a transition matrix on $\N_0$ with state $a$ absorbing for all $a\in\N_0\setminus\llbracket 0,N\rrbracket$.
Thus, it only remains to check \eqref{eqn:TM-projection} for all $a,b\in\llbracket 0,N\rrbracket$.
That this holds for $a=0$ is immediate, so we can assume $a\in\llbracket 1,N\rrbracket$, and in this case we have $\{X_{N}(0) = a\} = \{I_{X_{N+1},N}(0) < \infty, X_{N+1}(I_{X_{N+1},N}(0)) =a\}$.
Then, by the strong Markov property, we have
\begin{align*}
K_{N}(a,b) &=\P(X_N(1) = b \,|\,X_N(0) = i) \\
&=\P((P_N(X_{N+1}))(1) = b \,|\,X_N(0) = i) \\
&=\P((P_N(X_{N+1}))(1) = b \,|\,I_{X_{N+1},N}(0)<\infty,X_N(I_{X_{N+1},N}(0)) = a) \\
&=\P((P_N(X_{N+1}))(1) = b \,|\,X_{N+1}(0) = a).
\end{align*}
Now consider some cases.
If $K_{N+1}(N+1,N+1) < 1$, then we of course have $\P(I_{X_{N+1},N}(1) < \infty|X_{N+1}(0) = a) = 1$, hence
\begin{align*}
K_{N}(a,b) &=\P((P_N(X_{N+1}))(1) = j \,|\,X_{N+1}(0) = a) \\
&=\P(I_{X_{N+1},N}(1) < \infty, X_{N+1}(I_{X_{N+1},N}(1)) = b \,|\,X_{N+1}(0) = a) \\
&=\P(X_{N+1}(I_{X_{N+1},N}(1)) = b \,|\,X_{N+1}(0) = a) \\
&= K_{N+1}(a,b) + K_{N+1}(a,N+1)\sum_{k=1}^{\infty}(K_{N+1}(N+1,N+1))^{k-1} K_{N+1}(N+1,b) \\
&= K_{N+1}(a,b) + \frac{K_{N+1}(a,N+1)K_{N+1}(N+1,b)}{1-K_{N+1}(N+1,N+1)}.
\end{align*}
If instead $K_{N+1}(N+1,N+1) = 1$, then we consider the value of $b\in\llbracket 0,N\rrbracket$.
If $b=0$ then
\begin{align*}
K_{N}(a,0) &= \P((P_N(X_{N+1}))(1) = 0 \,|\,X_{N+1}(0) = a) \\
&= \P(I_{X_{N+1},N}(1) < \infty, X_{N+1}(I_{X_{N+1},N}(1)) = 0\,|\,X_{N+1}(0)=a)\\
&\qquad+\P(I_{X_{N+1},N}(1) = \infty\,|\,X_{N+1}(0)=a) \\
&= K_{N+1}(a,0) + K_{N+1}(a,N+1),
\end{align*}
and if $b\neq 0$ then
\begin{align*}
K_{N}(a,b) &= \P((P_N(X_{N+1}))(1) = b \,|\,X_{N+1}(0) = b) \\
&= \P(I_{X_{N+1},N}(1) < \infty, X_{N+1}(I_{X_{N+1},N}(1)) = b \,|\,X_{N+1}(0)=a)\\
&= K_{N+1}(a,b).
\end{align*}
Therefore, we have $\mathbf{K}\in\vtransmat$.

Finally, we need to check that $\boldsymbol{\nu}$ is compatible with $\mathbf{K}$, which amounts to checking \eqref{eqn:VID-VTM-compatibility} for all $N\in\N$ and $a\in \llbracket 0,N\rrbracket$.
To do this, write
\begin{align*}
&\P(X_N(0) = a ) \\
&=\P(X_{N+1}(0) = a) + \P(X_N(0) = a \,|\, X_{N+1}(0) = N+1)\P(X_{N+1}(0) = N+1),
\end{align*}
and let us compute $\P(X_N(0) = a \,|\, X_{N+1}(0) = N+1)$ by considering the necessary cases.
If $K_{N+1}(N+1,N+1) = 1$, then we clearly have
\begin{equation*}
\P(X_N(0) = a \,|\, X_{N+1}(0) = N+1) = \begin{cases}
1, &\text{ if } a = 0, \\
0, &\text{ if } a \neq 0, \\
\end{cases}
\end{equation*}
hence
\begin{equation*}
\P(X_N(0) = j) = \begin{cases}
\P(X_{N+1}(0) = 0) + \P(X_{N+1}(0) = N+1), &\text{ if } a = 0, \\
\P(X_{N+1}(0) = a), &\text{ if } a \neq 0. \\
\end{cases}
\end{equation*}
Otherwise we have $K_{N+1}(N+1,N+1) <1$, so we can compute
\begin{align*}
&\P(X_N(0) = a \,|\, X_{N+1}(0) = N+1) \\
&= \P(I_{X_{N+1},X}(0) < \infty, X_{N+1}(I_{X_{N+1},X}(0)) = a \,|\, X_{N+1}(0) = N+1) \\
&= \sum_{k=0}^{\infty}(K_{N+1}(N+1,N+1))^k K_{N+1}(N+1,a) \\
&= \frac{K_{N+1}(N+1,a)}{1-K_{N+1}(N+1,N+1)},
\end{align*}
hence
\begin{align*}
&\P(X_N(0) = a ) \\
&=\P(X_{N+1}(0) = a) + \P(X_{N+1}(0) = N+1)\frac{K_{N+1}(N+1,a)}{1-K_{N+1}(N+1,N+1)},
\end{align*}
as needed.
This finishes the proof of the result.
\end{proof}

\begin{definition}
In the setting of Lemma~\ref{lem:VIS-VTM-from-VMC}, we say that $\boldsymbol{\nu}$ and $\mathbf{K}$ are the \textit{virtual initial distribution (VID)} and VTM of $\mathbf{X}$, respectively.
Collectively, we say that $(\boldsymbol{\nu},\mathbf{K})$ are the \textit{canonical data} of $\mathbf{X}$.
\end{definition}

\begin{remark}
It would be slightly more precise to call $\boldsymbol{\nu}$ the \textit{initial virtual distritbution} of $\mathbf{X}$, but we with will use the slightly imprecise term because it sounds more natural (cf. Remark~\ref{rem:VMC-name}).
\end{remark}

The next goal is to establish the converse of Lemma~\ref{lem:VIS-VTM-from-VMC}, namely that for any compatible pair $(\boldsymbol{\nu},\mathbf{K})\in\vdist_1\times\vtransmat$ there is a VMC on some probability space with these as its canonical data.

\begin{proposition}\label{prop:VMC-from-VIS-VTM-existence}
If $(\boldsymbol{\nu},\mathbf{K})\in \vdist_1\times\vtransmat$ is any compatible pair, then there is a VMC with canonical data $(\boldsymbol{\nu},\mathbf{K})$.
\end{proposition}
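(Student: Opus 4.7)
The plan is to construct the VMC by building it level-by-level as a projective limit of finite-state Markov chains. For each $N\in\N$, let $\mu_N\in\mathcal{M}_1(\chain_N)$ denote the law of a Markov chain on $\llbracket 0,N\rrbracket$ with initial distribution $\nu_N$ and transition matrix $K_N$; this exists and is unique by the classical representation theorem for Markov chains. The key claim to verify is then the consistency condition $(P_N)_*\mu_{N+1} = \mu_N$ for every $N\in\N$. Granted this, Kolmogorov's extension theorem (in its projective-limit form for compact Polish spaces, justified because each $\chain_N$ is a compact Polish space by an earlier lemma and each $P_N$ is continuous by Lemma~\ref{lem:proj-cts}) produces a unique probability measure $\boldsymbol{\mu}$ on $\vchain$ whose marginals on each coordinate $\chain_N$ are $\mu_N$. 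Any $\vchain$-valued random variable $\mathbf{X}=\{X_N\}_{N\in\N}$ with law $\boldsymbol{\mu}$ is automatically a VMC because each marginal $X_N$ is distributed as a Markov chain in $\llbracket 1,N\rrbracket$ with cemetery $0$, and an application of Lemma~\ref{lem:VIS-VTM-from-VMC} will then identify the canonical data of $\mathbf{X}$ as $(\boldsymbol{\nu},\mathbf{K})$.

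The main work is thus establishing $(P_N)_*\mu_{N+1}=\mu_N$, and this is precisely where both axioms of the compatible pair enter. Let $X_{N+1}$ have law $\mu_{N+1}$. First, I would rerun the strong-Markov computation from the proof of Lemma~\ref{lem:VIS-VTM-from-VMC} (but now without any a~priori VMC in sight; the needed Markov and strong Markov properties come from $X_{N+1}$ alone) to conclude that $P_N(X_{N+1})$ is a Markov chain on $\llbracket 0,N\rrbracket$ whose transition matrix is exactly $P_N(K_{N+1})$, which by the VTM projectivity equals $K_N$. Second, I would compute the law of $(P_N(X_{N+1}))(0)$: conditioning on whether $X_{N+1}(0) = N+1$ or lies in $\llbracket 0,N\rrbracket$, and in the former case summing the geometric distribution over the number of returns to $N+1$ (when $K_{N+1}(N+1,N+1)<1$) or recognizing $N+1$ as absorbing (when $K_{N+1}(N+1,N+1)=1$), produces exactly the right-hand side of \eqref{eqn:VID-VTM-compatibility}. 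Compatibility of $(\boldsymbol{\nu},\mathbf{K})$ then identifies this as $\nu_N$. Hence $P_N(X_{N+1})$ has canonical data $(\nu_N,K_N)$, and the uniqueness part of the classical representation theorem yields $(P_N)_*\mu_{N+1}=\mu_N$.

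The main obstacle, if there is one, is bookkeeping: the compatibility relation \eqref{eqn:VID-VTM-compatibility} is stated as three cases, and one must match these against the two regimes $K_{N+1}(N+1,N+1)<1$ versus $K_{N+1}(N+1,N+1)=1$ and against the separate role of the cemetery $a=0$. Beyond that, the argument is a clean compatibility-plus-extension scheme: the VTM axiom delivers matching transition rules between levels, the compatibility axiom delivers matching initial distributions between levels, and the projective limit theorem then delivers a single $\vchain$-valued object on some probability space $(\Omega,\F,\P)$ realizing all these laws simultaneously.
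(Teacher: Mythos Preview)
Your proposal is correct and follows essentially the same approach as the paper: build the level-$N$ Markov chain laws $\mu_N$, verify consistency under the projections $P_N$, and invoke a Kolmogorov/projective-limit theorem. The only differences are packaging: the paper carries $\mu_N$ as a measure on $\prod_{M=1}^N \chain_M$ (concentrated on $\vchain_N$) and checks consistency by direct cylinder-set computation of joint probabilities, whereas you work with $\mu_N$ on $\chain_N$ and verify consistency by identifying $P_N(X_{N+1})$ as a Markov chain via the strong Markov property and then invoking uniqueness of the classical representation---both routes arrive at the same place with the same inputs (the VTM axiom for the transition matrix and the compatibility axiom for the initial distribution).
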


\begin{proof}
Write $\boldsymbol{\nu} = \{\nu_N\}_{N\in\N}$ and $\mathbf{K} = \{K_N\}_{N\in\N}$.
Then, for each $N\in\N$, let $(\Omega_N,\F_N,\P_N)$ be a probability space on which is defined a MC $X_N = \{X_N(i)\}_{i\in\N_0}$ on $\llbracket 0,N\rrbracket$ with initial distribution $\nu_N$ and transition matrix $K_N$.
Let $\mu_N$ denote the law of $\{P_M(X_N)\}_{M = 1}^{N}$ on the product space $\prod_{M=1}^{N}\chain_M$.

Now let us introduce some notation.
For $N\in\N$, write
\begin{equation*}
\vchain_N := \left\{ \{x_M\}_{M=1}^{N}\in \prod_{M=1}^{N}\chain_M : P_M(x_{M+1}) = x_M \text{ for } 1 \le M < N\right\},
\end{equation*}
and write $\mathcal{B}(\vchain_N)$ for the Borel $\sigma$-algebra on $\vchain_N$.
Next, for $\ell\in\N$ and $a_0,\ldots a_{\ell}\in \N$, write
\begin{equation*}
E_N(a_0,\ldots a_{\ell}) := \left\{\{x_M\}_{M=1}^{N}\in \vchain_N: x_N(0) = a_0,\ldots, x_N(\ell) = a_{\ell} \right\},
\end{equation*}
and define
\begin{equation*}
\mathcal{P}_N := \{E_N(a_0,\ldots a_{\ell}): \ell\in\N, a_0,\ldots, a_{\ell}\in \llbracket 0,N\rrbracket  \}.
\end{equation*}
Observe that $\mathcal{P}_N$ is a $\pi$-system with $\sigma(\mathcal{P}_N) = \mathcal{B}(\vchain_N)$.
Also, $\mathcal{B}( \prod_{M=1}^{N}\chain_M\setminus \vchain_N)$ is a $\pi$-system (in fact, a $\sigma$-algebra), and $\mathcal{P}_N$ and $\mathcal{B}( \prod_{M=1}^{N}\chain_M\setminus \vchain_N)$ are disjoint apart from that they both contain the empty set.
Therefore, $\mathcal{P}_N\cup\mathcal{B}( \prod_{M=1}^{N}\chain_M\setminus \vchain_N)$ is a $\pi$-system which generates $\mathcal{B}( \prod_{M=1}^{N}\chain_M)$

Now we claim that $\{\mu_N\}_{N\in\N}$ is projective, that is, that $\mu_{N+1}(\cdot \times \chain_{N+1}) = \mu_N(\cdot)$ on $\mathcal{B}(\prod_{M=1}^{N}\chain_M)$ for all $N\in\N$.
By the previous paragraph, it is enough to check this for sets in $\mathcal{P}_N\cup\mathcal{B}( \prod_{M=1}^{N}\chain_M\setminus \vchain_N)$; we note that $A\in \mathcal{B}( \prod_{M=1}^{N}\chain_M\setminus \vchain_N)$ has $\mu_{N+1}(A \times \chain_{N+1}) = 0 = \mu_N(A)$, so it only remains to check this for sets in $\mathcal{P}_N$.
More precisely, we need to show that for $N,\ell\in\N$ and $a_0,\ldots a_{\ell}\in\llbracket 0,N\rrbracket$, we have
\begin{align*}
&\mu_{N+1}(E_N(a_0,\ldots, a_{\ell})\times\chain_{N+1}) \\
&= \P_{N+1}((P_N(X_{N+1}))(0) = a_0,\ldots, (P_N(X_{N+1}))(\ell) = a_\ell) \\
&= \P_{N}(X_{N}(0) = a_0,\ldots, X_{N}(\ell) = a_\ell) \\
&=\mu_{N}(E_N(a_0,\ldots, a_{\ell})).
\end{align*}
Since 0 is an absorbing state for both $X_N$ and $P_N(X_{N+1})$, it suffices to assume that $a_0,\ldots, a_{\ell-1}\in\llbracket 1,N\rrbracket$ and $a_{\ell}\in\llbracket 0,N\rrbracket$, since otherwise both sides above are equal to zero.

To do this, we need to check a few different cases, and, in each case, we simply apply the definitions compatibility and of VTM.
If $K_{N+1}(N+1,N+1) < 1$, then we have
\begin{align*}
&\P_{N+1}((P_N(X_{N+1}))(0) = a_0,\ldots, (P_N(X_{N+1}))(\ell) = a_\ell) \\
&= \left(\nu_{N+1}(a_0)+\nu_{N+1}(N+1)\frac{K_{N+1}(N+1,a_0)}{1-K_{N+1}(N+1,N+1)}\right) \\
&\qquad\times\prod_{k=1}^{\ell}\left(K_{N+1}(a_{k-1},a_k) + \frac{K_{N+1}(a_{k-1},N+1)K_{N+1}(N+1,a_k)}{1-K_{N+1}(N+1,N+1)}\right) \\
&=\nu_N(a_0)\prod_{k=1}^{\ell}K_{N}(a_{k-1},a_k) \\
&= \P_{N}(X_{N}(0) = a_0,\ldots, X_{N}(\ell) = a_\ell),
\end{align*}
as needed.
Otherwise $K_{N+1}(N+1,N+1) = 1$, and we need to split into a few more cases.
First suppose $\ell=0$, and let us further consider the value of $a_{0}$.
If $a_{0} = 0$, then
\begin{align*}
\P_{N+1}((P_N(X_{N+1}))(0) = 0) &= \nu_{N+1}(0)+\nu_{N+1}(N+1) \\
&= \nu_N(0) = \P_{N}(X_{N}(0) = 0),
\end{align*}
and, if $a_0\neq 0$, then
\begin{align*}
\P_{N+1}((P_N(X_{N+1}))(0) = a_0) &=\nu_{N+1}(a_0) \\
&= \nu_{N}(a_0) = \P_{N}(X_{N}(0) = a_0),
\end{align*}
as needed.
Now suppose $\ell\ge 1$, and let us further consider the value of $a_{\ell}$.
If $a_\ell = 0$, then
\begin{align*}
&\P_{N+1}((P_N(X_{N+1}))(0) = a_0,\ldots, (P_N(X_{N+1}))(\ell-1) = a_{\ell-1},(P_N(X_{N+1}))(\ell) = 0) \\
&= \nu_{N+1}(a_0)\prod_{k=1}^{\ell-1}K_{N+1}(a_{k-1},a_k)(K_{N+1}(a_{\ell-1},0)+K_{N+1}(a_{\ell-1},N+1)) \\
&=\nu_N(a_0)\prod_{k=1}^{\ell-1}K_{N}(a_{k-1},a_k)K_{N}(a_{\ell-1},0) \\
&= \P_{N}(X_{N}(0) = a_0,\ldots, X_{N}(\ell-1) = a_{\ell-1},X_{N}(\ell) = 0),
\end{align*}
and, if $a_\ell \neq 0$, then
\begin{align*}
&\P_{N+1}((P_N(X_{N+1}))(0) = a_0,\ldots,(P_N(X_{N+1}))(\ell) = a_{\ell}) \\
&= \nu_{N+1}(a_0)\prod_{k=1}^{\ell-1}K_{N+1}(a_{k-1},a_k)K_{N+1}(a_{\ell-1},a_{\ell}) \\
&=\nu_N(a_0)\prod_{k=1}^{\ell-1}K_{N}(a_{k-1},a_k)K_{N}(a_{\ell-1},0) \\
&= \P_{N}(X_{N}(0) = a_0,\ldots, X_{N}(\ell) = a_\ell),
\end{align*}
as needed.
This shows that $\{\mu_N\}_{N\in\N}$ is indeed projective.
Thus, by \cite[Theorem~6.14]{Kallenberg}, there exists a probability measure $\mu$ on $\prod_{N\in\N}\chain_N$ with projections onto $\prod_{M=1}^{N}\chain_M$ given by $\mu_N$ for all $N\in\N$.

Now note that, since $\mu_N(\vchain_N) = 1$ for all $N\in\N$ and $\vchain = \bigcap_{N\in\N}(\vchain_N\times \prod_{M=N+1}^{\infty}\chain_M)$, we have
\begin{align*}
\mu(\vchain) &= \mu\left(\bigcap_{N\in\N}\left(\vchain_N\times \prod_{M=N+1}^{\infty}\chain_M\right)\right) \\
&= \lim_{N\to\infty}\mu\left(\vchain_N\times \prod_{M=N+1}^{\infty}\chain_M\right) = \lim_{N\to\infty}\mu_N(\vchain_N) = 1.
\end{align*}
In other words, $\mu$ is the law of a $\vchain$-valued random variable, and, by construction, its marginal distribution on $\chain_N$ is the law of a MC with initial distribution $\nu_N$ and transition matrix $K_N$, for each $N\in\N$.
This proves that $\mu$ is a VMC with canonical data $(\boldsymbol{\nu},\mathbf{K})$, as claimed.
\end{proof}

Finally, we show that the VMC guaranteed by Proposition~\ref{prop:VMC-from-VIS-VTM-existence} is unique in law.

\begin{proposition}\label{prop:VMC-from-VIS-VTM-uniqueness}
Suppose that $(\Omega,\F,\P)$ and $(\Omega',\F',\P')$ are probability spaces on which are defined a VMCs $\mathbf{X}$ and $\mathbf{X}'$, respectively, with canonical data $(\boldsymbol{\nu},\mathbf{K})$ and $(\boldsymbol{\nu}',\mathbf{K}')$, respectively.
If $(\boldsymbol{\nu},\mathbf{K})=(\boldsymbol{\nu}',\mathbf{K}')$, then we have $\P\circ \mathbf{X}^{-1} = \P'\circ (\mathbf{X}')^{-1}$ as measures on $\vchain$.
\end{proposition}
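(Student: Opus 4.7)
The plan is to show that the law of $\mathbf{X}$ on $\vchain$ is completely determined by the canonical data $(\boldsymbol{\nu},\mathbf{K})$, via a description of all its finite-dimensional marginals. Once this is established, the conclusion follows from a standard $\pi$-$\lambda$ or Kolmogorov-extension uniqueness argument.

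First, for each $N\in\N$, the marginal law $\P\circ X_N^{-1}$ on $\chain_N$ is, by hypothesis and by Lemma~\ref{lem:VIS-VTM-from-VMC}, precisely the law of a classical Markov chain on $\llbracket 0,N\rrbracket$ with initial distribution $\nu_N$ and transition matrix $K_N$; by the classical existence-and-uniqueness theorem for Markov chains, this marginal law is uniquely determined by $(\nu_N,K_N)$. The same description applies to $X_N'$ under $\P'$, so $\P\circ X_N^{-1}=\P'\circ (X_N')^{-1}$ for every $N\in\N$.

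Next I would upgrade this to a statement about the joint law of $(X_1,\ldots,X_N)$ on $\prod_{M=1}^{N}\chain_M$. The key observation is that $\mathbf{X}\in\vchain$ almost surely, so the projectivity $P_M(X_{M+1})=X_M$ holds $\P$-almost surely for each $M\in\N$. Iterating, we obtain
\begin{equation*}
X_M = (P_M\circ P_{M+1}\circ\cdots\circ P_{N-1})(X_N)
\end{equation*}
$\P$-almost surely, for every $1\le M\le N$. Hence the random vector $(X_1,\ldots,X_N)$ is almost surely a \emph{deterministic} Borel-measurable function of $X_N$ alone, namely $x\mapsto \bigl((P_1\circ\cdots\circ P_{N-1})(x),\ldots,P_{N-1}(x),x\bigr)$, whose measurability follows from Lemma~\ref{lem:proj-cts}. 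Consequently the joint law of $(X_1,\ldots,X_N)$ on $\prod_{M=1}^{N}\chain_M$ is the pushforward of $\P\circ X_N^{-1}$ under this map, and the same formula yields the joint law of $(X_1',\ldots,X_N')$ in terms of $\P'\circ (X_N')^{-1}$. Since the single-level marginals agree and the pushforward map is determined by the (deterministic) projections $\{P_M\}$, the $N$-fold joint laws coincide.

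Finally, I would invoke the uniqueness part of Kolmogorov's extension theorem (\cite[Theorem~6.14]{Kallenberg}): any Borel probability measure on $\prod_{N\in\N}\chain_N$ is determined by its finite-dimensional marginals on $\prod_{M=1}^{N}\chain_M$ for $N\in\N$. Since both $\P\circ\mathbf{X}^{-1}$ and $\P'\circ(\mathbf{X}')^{-1}$ give full mass to the closed subset $\vchain$ and agree on all finite-dimensional marginals, they coincide as measures on $\prod_{N\in\N}\chain_N$ and hence as measures on $\vchain$.

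The argument is largely routine; there is no serious obstacle. The only point worth flagging is that the deterministic coupling encoded in the definition of $\vchain$ is what makes the single-level marginals $\{\P\circ X_N^{-1}\}_{N\in\N}$ already determine the joint law — this would fail for an arbitrary projective sequence of measures without the almost-sure projectivity.
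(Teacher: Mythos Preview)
Your proposal is correct and follows essentially the same route as the paper: both arguments observe that the single-level marginal $\P\circ X_N^{-1}$ is determined by $(\nu_N,K_N)$ via the classical uniqueness theorem for Markov chains, that the lower levels $X_1,\ldots,X_{N-1}$ are almost surely deterministic (measurable) functions of $X_N$ by projectivity, and then conclude by a $\pi$-system/finite-dimensional-marginals uniqueness argument. The only cosmetic difference is that the paper phrases the last step as agreement on an explicit generating $\pi$-system of cylinder sets in $\vchain$, whereas you phrase it as the uniqueness half of Kolmogorov's theorem on $\prod_N\chain_N$; these are equivalent.
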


\begin{proof}
For $M\in\N$ and $B_1\in\mathcal{B}(\chain_1),\ldots,B_M \in \mathcal{B}(\chain_M)$, define
\begin{equation*}
E_M(B_1,\ldots, B_M) := \{\{x_N\}_{N\in\N}\in\vchain: x_1\in B_1,\ldots, x_M\in B_M\}
\end{equation*}
and also
\begin{equation*}
\mathcal{P}_M := \{E_M(B_1,\ldots, B_M): B_1\in\mathcal{B}(\chain_1),\ldots,B_M \in \mathcal{B}(\chain_M) \}.
\end{equation*}
Then note that $\mathcal{P}:=\bigcup_{M\in\N}\mathcal{P}_M$ is a $\pi$-system with $\sigma(\mathcal{P}) = \mathcal{B}(\vchain)$.
Hence, it suffices to show that $\P\circ \mathbf{X}^{-1}$ and $\P'\circ (\mathbf{X}')^{-1}$ agree for all sets in $\mathcal{P}$.

To do this, write $\mathbf{X} = \{X_N\}_{N\in\N}$, and recall that, on $\supp(\P\circ\mathbf{X}^{-1})$, the random variables $\{X_{N}\}_{N\in\llbracket 1,M-1\rrbracket}$ are a measurable function of the random variable $X_M$, for all $M\in\N$; the analogous claim is of course also true for $\mathbf{X}' = \{X_N'\}_{N\in\N}$.
Also write $\boldsymbol{\nu} = \{\nu_N\}_{N\in\N}$ and $\boldsymbol{\nu}' = \{\nu_N'\}_{N\in\N}$, well as $\mathbf{K} = \{K_N\}_{N\in\N}$ and $\mathbf{K}' = \{K_N'\}_{N\in\N}$.
Then take any $N\in\N$ and $B_1\in\mathcal{B}(\chain_1),\ldots,B_N\in\mathcal{B}(\chain_N)$, and note that the claim is immediate if $E_N(B_1,\ldots,B_N)$ is empty.
Otherwise, we have
\begin{align*}
(\P\circ\mathbf{X}^{-1})(E_N(B_1,\ldots,B_N)) &= \P(X_1\in B_1,\ldots, X_N\in B_N) \\
&= \P(X_N\in B_N) \\
&= \P'(X'_N\in B_N) \\
&= \P'(X'_1\in B_1,\ldots, X'_N\in B_N) \\
&= (\P'\circ(\mathbf{X}')^{-1})(E_N(B_1,\ldots,B_N)),
\end{align*}
where $\P(X_N\in B_N) = \P'(X'_N\in B_N)$ follows from the classical result applied to $\nu_N = \nu_N'$ and $K_N = K_N'$.
This finishes the proof.
\end{proof}

Summarizing the partial results shown so far, we come to the main result of this section, which provides a representation theorem describing that each VMC is uniquely described by a compatible pair of a VD and a VTM.

\begin{theorem}\label{thm:VMC-VID-VTM-bijection}
There is a homeomorphism from the space of laws of VMCs to the space of compatible pairs of VID and VTM given by sending each law of a VMC to its canonical data.
\end{theorem}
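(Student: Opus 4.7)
The proof assembles the bijection from the results already established and then handles the topological part. First, I would note that Lemma~\ref{lem:VIS-VTM-from-VMC} shows the map $\Phi$ sending each VMC law $\mu$ to its canonical data $(\boldsymbol{\nu}, \mathbf{K})$ lands inside the set of compatible pairs in $\vdist_1 \times \vtransmat$. Surjectivity of $\Phi$ onto this set is furnished by Proposition~\ref{prop:VMC-from-VIS-VTM-existence}, and injectivity by Proposition~\ref{prop:VMC-from-VIS-VTM-uniqueness}, which says that two VMCs with the same canonical data share their law on $\vchain$. Thus $\Phi$ is a bijection.

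For continuity of $\Phi$, suppose $\mu_n \to \mu$ weakly in $\mathcal{M}_1(\vchain)$. Fix $N \in \N$; the coordinate projection $\pi_N : \vchain \to \chain_N$ is continuous, so the pushforwards $(\pi_N)_* \mu_n$ converge weakly to $(\pi_N)_* \mu$. Since $\chain_N$ sits inside the countable product $\llbracket 0, N\rrbracket^{\N_0}$, weak convergence of measures on $\chain_N$ is equivalent to pointwise convergence of all finite-dimensional distributions. In particular, $\nu_N^{(n)}(a) \to \nu_N(a)$ for every $a$, and the joint two-time distributions $(X_N(0), X_N(1))$ converge; whenever $\nu_N(a) > 0$ we have $K_N^{(n)}(a, b) \to K_N(a, b)$ by the ratio definition. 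For states $a$ with $\nu_N(a) = 0$ the value of $K_N(a, \cdot)$ is not forced by $\mu$ alone, so one fixes a canonical convention (for example, $K_N(a,a)=1$ by default, or propagating via the compatibility identity from higher levels) and checks that it is preserved under passage to the limit.

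For continuity of $\Phi^{-1}$, suppose that compatible pairs $(\boldsymbol{\nu}^{(n)}, \mathbf{K}^{(n)}) \to (\boldsymbol{\nu}, \mathbf{K})$ componentwise in $\vdist_1 \times \vtransmat$, and let $\mu_n, \mu$ be the corresponding VMC laws. Since $\vchain$ is a compact Polish space whose topology is generated by the cylinder basis on each $\chain_N$, weak convergence $\mu_n \to \mu$ reduces to pointwise convergence of the finite-dimensional distributions of each slice $X_N$. But for any cylinder event $\{x_N(0) = a_0, \ldots, x_N(\ell) = a_\ell\}$, its probability under the $n$-th VMC equals
\[
\nu_N^{(n)}(a_0) \prod_{k=1}^{\ell} K_N^{(n)}(a_{k-1}, a_k),
\]
which is a finite product in the coordinates of $(\boldsymbol{\nu}^{(n)}, \mathbf{K}^{(n)})$ and hence converges to the analogous expression in the limit, as needed.

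The main obstacle is the subtlety in handling rows of $K_N$ at states where $\nu_N(a) = 0$, since the conditional probabilities defining the canonical data are ambiguous there and arbitrary small changes in the law can move those rows discontinuously under an inconvenient convention. Once a compatible convention is pinned down (either by a canonical choice across all levels or by implicitly quotienting compatible pairs by the equivalence of inducing the same VMC law), the rest of the argument is purely a routine matter of convergence of finite-dimensional distributions on countable discrete state spaces.
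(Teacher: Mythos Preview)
Your bijection argument is identical to the paper's: Lemma~\ref{lem:VIS-VTM-from-VMC} for well-definedness, Proposition~\ref{prop:VMC-from-VIS-VTM-existence} for surjectivity, and Proposition~\ref{prop:VMC-from-VIS-VTM-uniqueness} for injectivity.

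For the topological part, the paper takes a shorter route than you do. Rather than proving continuity in both directions, it observes that the domain of $\Phi$ (the space of VMC laws) is compact---since $\vchain$ is compact Polish, $\mathcal{M}_1(\vchain)$ is compact, and the Markov property is closed under weak limits---while the codomain is Hausdorff as a subspace of a metrizable product. It then asserts that $\Phi$ is ``clearly continuous'' and invokes the standard fact that a continuous bijection from a compact space to a Hausdorff space is automatically a homeomorphism. This spares the paper your direct argument for continuity of $\Phi^{-1}$.

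Your explicit proof of continuity of $\Phi^{-1}$ via finite products of entries is clean and correct, and has the virtue of being fully self-contained without appealing to compactness of the VMC-law space. On the other hand, your proof of continuity of $\Phi$ runs into the null-state ambiguity you flag: when $\nu_N(a)=0$, the row $K_N(a,\cdot)$ is not determined by the law, and you correctly note that a convention must be fixed. The paper's one-line ``clearly continuous'' glosses over exactly the same issue, so this is not a defect of your approach relative to theirs---but since the paper only needs continuity of $\Phi$ and gets the inverse for free, it can afford to be terse. Had you wished to sidestep the subtlety entirely, you could have combined your clean $\Phi^{-1}$ argument with the observation that the space of compatible pairs, being the continuous bijective image of a compact set under $\Phi$, is itself compact, and then run the compact-to-Hausdorff trick on $\Phi^{-1}$ instead.
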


\begin{proof}
Note that Lemma~\ref{lem:VIS-VTM-from-VMC} implies that there is a map
\begin{align*}
\Phi: &\{\mu\in \mathcal{M}_1(\vchain): \mu \text{ is the law of a VMC} \} \\
&\qquad\qquad\qquad\to \{(\boldsymbol{\nu},\mathbf{K})\in\vdist_1\times\vtransmat : (\boldsymbol{\nu},\mathbf{K}) \text{ is compatible} \}
\end{align*}
which sends each law of a VMC to its canonical data.
Then, Proposition~\ref{prop:VMC-from-VIS-VTM-existence} implies that $\Phi$ is surjective and Proposition~\ref{prop:VMC-from-VIS-VTM-uniqueness} implies that $\Phi$ is injective.
Now we apply the usual trick:
The domain of $\Phi$ is compact (the Markov property is preserved under weak limits), and the range of $\Phi$ is Hausdorff (it is a subset of a metrizable space, hence metrizable itself).
Moreover, $\Phi$ is clearly continuous, and a continuous bijection with these properties is automatically a homeomorphism.

\end{proof}

\subsection{Examples}\label{subsec:Ex-2}
Finally, let us see some illustrative examples of the theory just developed. 
To do this we introduce one last notational convention:
In most interesting examples of VMCs, the VTM $\mathbf{K} = \{K_N\}_{N\in\N}$ has $K_N(a,0) = 0$ for all $N\in\N$ and $a\in\llbracket 1,N \rrbracket$, so displaying the zeroth row and column does not add any information.
So, in order to compactify the presentation, we will use matrices with parentheses as boundaries to denote TMs including the zeroth row and column and matrices with brackets as boundaries to denote TMs excluding the zeroth row and column; in \LaTeX, these correspond to the \texttt{pmatrix} and \texttt{bmatrix} environments, respectively.

\begin{example}
Let $\boldsymbol{\sigma} = \{\sigma_N\}_{N\in\N}\in\vperm$ be a virtual permutation and take any $a\in\N$, and let $\mathbf{X}$ be the VMC on $(\Omega,\F,\P_{L_a})$ constructed in Example~\ref{ex:VMC-from-virtual-perm}.
Its VTM $\mathbf{K}=\{K_N\}_{N\in\N}$ is a \textit{virtual permutation matrix} in the sense that $K_N$ is a permutation matrix for each $N\in\N$, and its VID is exactly $\boldsymbol{\nu} = \{\delta_{a_N}\}_{N\in\N}$ for a suitable sequence $\{a_N\}_{N\in\N}\in\prod_{N\in\N}\llbracket 0,N\rrbracket$.
We write $\mathbf{K}(\boldsymbol{\sigma})$ for the VTM of this VMC, which, in particular, does not depend on $a\in\N_0$.
\end{example}

\begin{example}\label{ex:down-from-infty-again}
	Consider the VMC $\mathbf{X}$ on $(\Omega,\F,\P)$ constructed via Example~\ref{ex:down-from-infty}, where $F$ and $\{L_N\}_{N\in\N}$ are left general.
	We define $q_N = (1-F(L_N))/(1-F(L_{N+1}))$ for $N\in\N$, and then we note that its VTM $\mathbf{K} = \{K_N\}_{N\in\N}$ is exactly given by
	\begin{equation*}
	K_N = \begin{bmatrix}
	0 & 0 & 0 & 0 & \cdots & 0 & 0 & 1 \\
	q_1 & 0 & 0 & 0 & \cdots & 0 & 0 & 1-q_1 \\
	0 & q_2 & 0 & 0 & \cdots & 0 & 0 & 1-q_2 \\
	0 & 0 & q_3& 0 & \cdots & 0 & 0 & 1-q_3 \\
	\vdots & \vdots & \vdots & \vdots & \ddots & \vdots & \vdots & \vdots \\
	0 & 0 & 0 & 0 & \cdots & 0 & 0& 1-q_{N-3} \\
	0 & 0 & 0 & 0 & \cdots & q_{N-2} & 0 & 1-q_{N-2} \\
	0 & 0 & 0 & 0 & \cdots & 0 & q_{N-1} & 1-q_{N-1} \\
	\end{bmatrix}
	\end{equation*}
	for each $N\in\N$, and its VID $\boldsymbol{\nu}$ of $\mathbf{X}$ is just $\boldsymbol{\nu} = \{\delta_N\}_{N\in\N}\in\vdist_1$.
	Note also that $K = \lim_{N\to\infty}K_N$ does not exist in $\transmat$, hence by Lemma~\ref{lem:VK-K-characterization} there is no TM corresponding to this VTM.
\end{example}

\begin{example}\label{ex:splitting-VMC-again}
Consider the VMC $\mathbf{X}$ on $(\Omega,\F,\P_0)$ constructed via Example~\ref{ex:splitting-VMC}.
Its VTM $\mathbf{K} = \{K_N\}_{N\in\N}$ and VID $\boldsymbol{\nu} = \{\nu_N\}_{N\in\N}$ are given by
\begin{equation*}
K_N = \begin{bmatrix}
0 & 0 & 0 & 0 & \cdots & 0 & 0 & 1/2 & 1/2 \\
0 & 0 & 0 & 0 & \cdots & 0 & 0 & 1/2 & 1/2 \\
1 & 0 & 0 & 0 & \cdots & 0 & 0 & 0 & 0 \\
0 & 1 & 0 & 0 & \cdots & 0 & 0 & 0 & 0 \\
\vdots & \vdots & \vdots & \vdots & \ddots & \vdots & \vdots & \vdots \\
0 & 0 & 0 & 0 & \cdots & 0 & 0 & 0 & 0 \\
0 & 0 & 0 & 0 & \cdots & 0 & 0 & 0 & 0 \\
0 & 0 & 0 & 0 & \cdots & 1 & 0 & 0 & 0 \\
0 & 0 & 0 & 0 & \cdots & 0 & 1 & 0 & 0 \\
\end{bmatrix}\qquad \text{ and }\qquad
\nu_N = \begin{bmatrix}
0 \\ 0 \\ 0 \\ 0 \\ \vdots \\ 0 \\ 0 \\ 1/2 \\ 1/2
\end{bmatrix}
\end{equation*}
for all $N\in\N$.
Note, in particular, that neither the VID not the VTM depend on the choice of $\{\ell_N\}_{N\in\N}$.
Also observe $K = \lim_{N\to\infty}K_N$ does not exist in $\transmat$, hence by Lemma~\ref{lem:VK-K-characterization} there is no TM corresponding to this VTM.
\end{example}

\begin{example}\label{ex:VMC-RW}
	For each $N\in\N$, set
	\begin{equation*}
	K_N = \begin{bmatrix}
	1/2 & 1/2 & 0 & \cdots & 0 & 0 & 0 \\
	1/2 & 0 & 1/2 & \cdots & 0 & 0 & 0 \\
	0 & 1/2 & 0 & \cdots & 0 & 0 & 0 \\
	\vdots & \vdots & \vdots & \ddots & \vdots & \vdots & \vdots \\
	0 & 0& 0 & \cdots & 0 & 1/2 & 0 \\
	0 & 0& 0 & \cdots & 1/2 & 0 & 1/2 \\
	0 & 0& 0 & \cdots & 0 & 1/2 & 1/2 \\
	\end{bmatrix},
	\end{equation*}
	which is an element of $[0,1]^{(N+1)\times (N+1)}$.
	It is easy to verify that $\mathbf{K} = \{K_N\}_{N\in\N}$ is indeed a VTM.
	In fact, $\mathbf{K} = \iota(K)$, where
	\begin{equation*}
	K = \begin{bmatrix}
	1/2 & 1/2 & 0 & 0 & 0 & \cdots \\
	1/2 & 0 & 1/2 & 0 & 0 & \cdots \\
	0 & 1/2 & 0 & 1/2 & 0 & \cdots \\
	0 & 0 & 1/2 & 0 & 1/2 &\cdots \\
	0 & 0 & 0 & 1/2 & 0 & \cdots \\
	\vdots & \vdots & \vdots & \vdots & \vdots & \vdots & \ddots \\
	\end{bmatrix}.
	\end{equation*}
	In other words, $\mathbf{K}$ is the VTM representing the simple symmetric random walk on $\N$ where the boundary state 1 holds and reflects with equal probability.
	
	In particular, observe that the VID $\boldsymbol{\nu} = \{\delta_N\}_{N\in\N}$ is compatible with $\mathbf{K}$.
	It follows that there is a VMC $\mathbf{X} = \{X_N\}_{N\in\N}$ with canonical data $(\boldsymbol{\nu},\mathbf{K})$, and it satisfies $X_N(0) = N$ for all $N\in\N$.
	Thus we have $\lim_{N\to\infty}X_N(0) = \infty$ hence $\lim_{N\to\infty}X_N$ does not exist in $\chain$.
	Therefore, $\mathbf{X}\notin \iota(\chain)$ almost surely, by Lemma~\ref{lem:VC-C-characterization}.
	Intuitively speaking, $\mathbf{X}$ is the simple random walk on $\N$ where the state 1 holds and reflects with equal probability, but ``started from infinity''.
	This example demonstrates that even if a VTM corresponds to a classical TM, a compatible VID can be such that the resulting VMC does not correspond to a classical MC.
\end{example}

\section{Some Aspects of Convexity}\label{sec:convexity}

In the classical setting, even though the law of a Markov chain (MC) is uniquely characterized by its transition matrix (TM) and initial distribution (ID), one usually regards the TM as containing the rich information and the ID as playing a secondary role.
We adopt this perspective in the present context, where, although the law of a virtual Markov chain (VMC) is uniquely characterized by its virtual transition matrix (VTM) and virtual initial distribution (VID), we will regard the VTM as containing the important information and the VID as playing a secondary role.

This viewpoint naturally leads us to study a few different problems of convexity in infinite-dimensional spaces related to the study of VMCs.
In Subsection~\ref{subsec:compatibility-revisted}, we consider the compact convex space of all VIDs that are compatible with a given VTM, and in Subsection~\ref{subsec:equi-stat} we consider two different compact convex spaces of VIDs that generalize the notions of equilibrium distribution and stationary distribution for classical MCs; in Subsection~\ref{subsec:Ex-3} we compute some examples of the preceding results.
In Subsection~\ref{subsec:Birkhoff-polytope} we aim to understand the convexity structure of the \textit{virtual Birkhoff polytope}, and we show, in particular, that the classical Birkhoff-von Neumann theorem fails in the virtual setting.

\subsection{Compatibility Revisited}\label{subsec:compatibility-revisted}

Motivated by Theorem~\ref{thm:VMC-VID-VTM-bijection}, we are led to study the space of all VIDs that are compatible with a given VTM.
As we will show in this subsection, this is a compact convex set whose extreme points can, in many cases, be concretely understood.

\begin{definition}
	For $\mathbf{K}\in \vtransmat$, write $\vdist_1(\mathbf{K})$ for the set of elements of $\vdist_1$ which are compatible with $\mathbf{K}$.
\end{definition}

We also introduce some notation to simplify the requisite conditions of VTM and of compatibility.
That is, for $\mathbf{K} = \{K_N\}_{N\in\N} \in\prod_{N\in\N}\transmat_{N}$, $N\in\N$, and $a\in\llbracket 0,N\rrbracket$, we define the constants
\begin{equation*}
C^{\mathbf{K}}_{N,a} := \begin{cases}
\frac{K_{N+1}(N+1,a)}{1-K_{N+1}(N+1,N+1)}, &\text{ if } K_{N+1}(N+1,N+1) < 1, \\
1, &\text{ else if } a = 0, \\
0, &\text{ else if } a \neq 0. \\
\end{cases}
\end{equation*}
Also define $C_{0,0}^{\mathbf{K}} = 0$ by convention.
Then, observe that $\mathbf{K}$ is a VTM if and only if we have $K_{N}(a,b) = K_{N+1}(a,b) + K_{N+1}(a,N+1)C^{\mathbf{K}}_{N,b}$ for all $N\in\N$ and $a\in\llbracket 0,N\rrbracket$ and that a VD $\boldsymbol{\nu} = \{\nu_N\}_{N\in\N}\in\vdist_1$ is compatible with a VTM $\mathbf{K}\in\vtransmat$ iff we have $\nu_N(a) = \nu_{N+1}(a) + \nu_{N+1}(N+1)C^{\mathbf{K}}_{N,a}$ for all $N\in\N$ and $a\in\llbracket 0,N\rrbracket$.
Also note that row-stochasticity implies that we have
\begin{equation}\label{eqn:CK-identity}
\sum_{a=0}^{N}C_{N,a}^{\mathbf{K}} = 1
\end{equation}
for any $N\in\N$.

\begin{lemma}\label{lem:dvist-K-cvx-cpt}
	For $\mathbf{K}\in \vtransmat$, the set $\vdist_1(\mathbf{K})$ is compact, and convex.
\end{lemma}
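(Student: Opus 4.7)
The plan is to prove each property separately, both reducing to the observation that the compatibility condition from the definition of $\vdist_1(\mathbf{K})$ is a (countable) family of affine constraints on $\boldsymbol{\nu}$.

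For convexity, I would take $\boldsymbol{\nu}, \boldsymbol{\nu}' \in \vdist_1(\mathbf{K})$ and $t \in [0,1]$, and verify that $\boldsymbol{\mu} := t\boldsymbol{\nu} + (1-t)\boldsymbol{\nu}'$, defined componentwise, still lies in $\vdist_1(\mathbf{K})$. The conditions defining $\vdist_1$ are that each $\mu_N$ is a probability measure on $\llbracket 0,N\rrbracket$ and that $\mu_N(a) \ge \mu_{N+1}(a)$ for $a\in\llbracket 0,N\rrbracket$; both are preserved under convex combinations because $\mathcal{M}_1(\llbracket 0,N\rrbracket)$ is itself convex and the inequality is closed under convex combinations. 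The compatibility condition $\nu_N(a) = \nu_{N+1}(a) + \nu_{N+1}(N+1)C^{\mathbf{K}}_{N,a}$ is an affine identity in $\boldsymbol{\nu}$, so it is also preserved.

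For compactness, I would embed $\vdist_1(\mathbf{K})$ in the ambient space $\prod_{N\in\N}\mathcal{M}_1(\llbracket 0,N\rrbracket)$, which is compact by Tychonoff's theorem since each factor $\mathcal{M}_1(\llbracket 0,N\rrbracket)$ is a finite-dimensional simplex. It then suffices to show that $\vdist_1(\mathbf{K})$ is closed in this product. Writing
\begin{equation*}
\vdist_1(\mathbf{K}) = \bigcap_{N\in\N}\bigcap_{a\in\llbracket 0,N\rrbracket}\Bigl(A_{N,a} \cap B_{N,a}\Bigr),
\end{equation*}
where $A_{N,a} := \{\boldsymbol{\nu} : \nu_N(a) \ge \nu_{N+1}(a)\}$ and $B_{N,a} := \{\boldsymbol{\nu} : \nu_N(a) = \nu_{N+1}(a) + \nu_{N+1}(N+1)C^{\mathbf{K}}_{N,a}\}$, each of these sets is the preimage of a closed subset of $\R$ under a continuous evaluation map into the $N$th and $(N+1)$st coordinates (continuity of evaluation on a finite-dimensional simplex is immediate). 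Hence $\vdist_1(\mathbf{K})$ is a countable intersection of closed sets, hence closed and thus compact.

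There is essentially no obstacle here: the whole content is recognizing that the compatibility equation is affine and the ambient product of simplices is compact. The only mild care is in being explicit that weak convergence on each $\mathcal{M}_1(\llbracket 0,N\rrbracket)$ coincides with pointwise convergence of the atomic masses, so that the constraints defining $A_{N,a}$ and $B_{N,a}$ are genuinely closed conditions in the product topology.
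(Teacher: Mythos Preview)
Your proposal is correct and follows essentially the same approach as the paper: both arguments express $\vdist_1(\mathbf{K})$ as a countable intersection of sets cut out by affine (in)equalities, deduce convexity from linearity of the constraints, and deduce compactness from closedness in a compact ambient space. The only cosmetic difference is that the paper takes $\vdist_1$ itself as the ambient compact space (its compactness having been established earlier via the homeomorphism with $\vdist_0$), whereas you work directly in the larger product $\prod_{N\in\N}\mathcal{M}_1(\llbracket 0,N\rrbracket)$ and include the monotonicity constraints $A_{N,a}$ explicitly; this makes your argument slightly more self-contained but is otherwise the same.
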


\begin{proof}
	Observe that we can write	
	\begin{equation*}
	\vdist_1(\mathbf{K}) = \bigcap_{\substack{M\in\N \\ a\in\llbracket 0,M\rrbracket}}\left\{\{\nu_N\}_{N\in\N}\in \vdist_1 : \nu_N(a) = \nu_{N+1}(a) + \nu_{N+1}(N+1)C^{\mathbf{K}}_{N,a} \right\}.
	\end{equation*}
	In other words, $\vdist_1(\mathbf{K})$ is defined as the solution set to countably many linear equations whose coefficients depend on $\mathbf{K}$.
	Of course this implies that $\vdist_1(\mathbf{K})$ is convex.
	Moreover, it implies that $\vdist_1(\mathbf{K})$ is closed in $\vdist_1$, which, since $\vdist_1$ is a compact space, implies that $\vdist_1(\mathbf{K})$ is a compact space.
\end{proof}

By Choquet's theorem, we know that every element of $\vdist_1(\mathbf{K})$ can be written uniquely as a mixture of elements of the extreme points of this set, denoted $\ex{\vdist_1(\mathbf{K})}$.
We are hence motivated to understand the structure of this set of extreme points.

\begin{lemma}\label{lem:VTM-point-mass-VID}
	Let $\mathbf{K}\in\vtransmat$ be a VTM and let $M\in\N_0$ be arbitrary.
	Then, there is a unique VID $\boldsymbol{\nu} = \{\nu_N\}_{N\in\N}\in \vdist_1(\mathbf{K})$ such that $\nu_N =\delta_M$ holds for all $N\in\N$ with $N\ge M$.
\end{lemma}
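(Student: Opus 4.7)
The plan is to construct the unique VID explicitly by reverse recursion from the index $N = M$, using compatibility to propagate the constraint down to $N = 1$.

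First I will define $\nu_N := \delta_M$ for all $N \ge M$, and then, for $N \in \llbracket 1, M-1\rrbracket$ (only relevant when $M \ge 2$), I will recursively set
\begin{equation*}
\nu_N(a) := \nu_{N+1}(a) + \nu_{N+1}(N+1)\, C^{\mathbf{K}}_{N,a}
\end{equation*}
for $a \in \llbracket 0, N \rrbracket$. Then I need to check four things: (i) each $\nu_N$ is a probability measure on $\llbracket 0, N\rrbracket$, (ii) the marginals satisfy the monotonicity $\nu_N(a) \ge \nu_{N+1}(a)$ so that $\boldsymbol{\nu} \in \vdist_1$, (iii) the pair $(\boldsymbol{\nu},\mathbf{K})$ is compatible, and (iv) uniqueness.

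For (i), non-negativity of each $\nu_N(a)$ is immediate since $C^{\mathbf{K}}_{N,a} \ge 0$. For the total mass I would sum the recursive identity over $a \in \llbracket 0, N\rrbracket$ and apply the identity \eqref{eqn:CK-identity} $\sum_{a=0}^{N} C^{\mathbf{K}}_{N,a} = 1$, which gives $\sum_{a=0}^{N}\nu_N(a) = \sum_{a=0}^{N}\nu_{N+1}(a) + \nu_{N+1}(N+1) = \sum_{a=0}^{N+1}\nu_{N+1}(a) = 1$; this propagates downward from $N = M$. Claim (ii) is immediate since the added term $\nu_{N+1}(N+1)\, C^{\mathbf{K}}_{N,a}$ is non-negative. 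Claim (iii) is essentially definitional: for $N < M$, compatibility is exactly the recursive rule used to define $\nu_N$; for $N \ge M$, we have $\nu_{N+1} = \delta_M$ so $\nu_{N+1}(N+1) = 0$ (because $N+1 > M$), and both sides of the compatibility relation reduce to $\delta_M(a)$.

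For uniqueness (iv), suppose $\boldsymbol{\nu}' = \{\nu'_N\}_{N\in\N}\in\vdist_1(\mathbf{K})$ also satisfies $\nu'_N = \delta_M$ for all $N \ge M$. Then $\nu'_M = \delta_M = \nu_M$, and compatibility at index $N = M-1, M-2, \ldots, 1$ forces $\nu'_N(a) = \nu'_{N+1}(a) + \nu'_{N+1}(N+1)\, C^{\mathbf{K}}_{N,a}$, which is the same recursion used to define $\nu_N$. A straightforward induction then gives $\nu'_N = \nu_N$ for all $N$, completing the argument. The proof is essentially a bookkeeping exercise; the only substantive input is the normalization identity \eqref{eqn:CK-identity}, which is why the recursively-defined $\nu_N$ remain probability measures rather than subprobability or signed measures.
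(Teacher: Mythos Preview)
Your proof is correct and takes a more direct route than the paper's. The paper proves existence by constructing a classical MC with transition matrix $K_M$ and initial state $\delta_M$, embedding it as a VMC via $\iota$, and reading off its canonical data via Lemma~\ref{lem:VIS-VTM-from-VMC}; since the VTM of this embedded VMC is $\iota(K_M)$ rather than $\mathbf{K}$, the paper must then argue separately that the resulting VID is also compatible with $\mathbf{K}$ (using that $\iota(K_M)$ and $\mathbf{K}$ agree at levels $\le M$, and that $\nu_{N+1}(N+1)=0$ for $N\ge M$). You bypass this detour by defining $\nu_N$ directly through the compatibility recursion and checking the $\vdist_1$ axioms by hand, with \eqref{eqn:CK-identity} doing the work for normalization. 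Both proofs use the same downward-induction argument for uniqueness. The paper's route has the conceptual payoff of exhibiting $\delta_M^{\mathbf{K}}$ as the VID of an actual classical MC started at $M$; yours is more elementary and self-contained.
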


\begin{proof}
	For existence, let $(\Omega,\F,\P)$ be a probability space on which is defined $X = \{X(i)\}_{i\in\N_0}$ a MC in $\N_0$ with TM $K_M$ and ID $\delta_M$.
	By Proposition~\ref{prop:VMC-MC-characterization} and Lemma~\ref{lem:VIS-VTM-from-VMC}, the VMC $\iota(X)$ has some canonical data $\boldsymbol{\nu} = \{\nu_N\}_{N\in\N}\in\vdist_1$ and $\mathbf{K} = \{K_N\}_{N\in\N}\in\vtransmat$ with $(\boldsymbol{\nu},\mathbf{K})$ compatible.
	Note that, for $N\ge M$, we have $P_N(X) = X$, hence
	\begin{equation*}
	\nu_N(a) = \P((P_N(X))= a) = \P(X = a) = \delta_M(a)
	\end{equation*}
	for all $a\in\llbracket 0,N\rrbracket$, as desired.
	Also, by construction, we have $\mathbf{K} = \iota(K_M)$.
	Thus, it only remains to show that $(\mathbf{\nu},\iota(K_M))$ being compatible implies that $(\mathbf{\nu},\mathbf{K})$ is compatible.
	To do this, write $\iota(K_M) = \{K_N'\}_{N\in\N}$, and note that, for $N\in\N$, we have that $N\ge M$ implies $K_N' = K_M$ and that $N\le M$ implies $K_N' = K_N$.
	Now let $N\in\N$ and $a\in\llbracket 0,N\rrbracket$ be arbitrary.
	If $N\le M$ then we have $C_{N,a}^{\mathbf{K}} = C_{N,a}^{\iota(K_M)}$, hence
	\begin{align*}
	\nu_N(a) &= \nu_{N+1}(a) + \nu_{N+1}(N+1)C^{\iota(K_M)}_{N,a} \\
	&= \nu_{N+1}(a) + \nu_{N+1}(N+1)C^{\mathbf{K}}_{N,a}.
	\end{align*}
	If instead $N\ge M$ then we have $\nu_{N+1}(N+1) = 0$, hence
	\begin{equation*}
	\nu_N(a) = \delta_{M}(a) = \nu_{N+1}(a) + \nu_{N+1}(N+1)C^{\mathbf{K}}_{N,a}.
	\end{equation*}
	Therefore, existence is proved.
	
	For uniqueness, take $\boldsymbol{\nu},\boldsymbol{\nu}'\in\vdist_1(\mathbf{K})$ and write $\boldsymbol{\nu} = \{\nu_N\}_{N\in\N}$ and $\boldsymbol{\nu}' = \{\nu_N'\}_{N\in\N}$, and suppose that $\nu_N = \nu_N' = \delta_N$ holds for all $N\in\N$ with $N\ge M$.
	Then applying the compatibility relation $\nu_N(a) = \nu_{N+1}(a) + \nu_{N+1}(N+1)C^{\mathbf{K}}_{N,a}$ for all $N\in\N$ and $a\in\llbracket 0,N\rrbracket$ inductively shows that we also have $\nu_N = \nu_N'$ for all $N\in\N$ with $N\le M$.
	Thus $\nu_N = \nu_N'$ for all $N\in\N$, hence $\boldsymbol{\nu} = \boldsymbol{\nu}'$.	
\end{proof}

For any $\mathbf{K}\in\vtransmat$ and $M\in\N_0$, write $\delta_{M}^{\mathbf{K}}$ for the element of $\vdist_1(\mathbf{K})$ guaranteed by the result above.
Observe also that, if $\mathbf{K}$ arises from the setting of Lemma~\ref{lem:construct-VMC-from-MP}, then we simply have $\delta_M^{\mathbf{K}} = \{\P_{L_M}\circ X_{N}(0)^{-1}\}_{N\in\N}$ for $M\in\N$.
The next result confirms the intuition that $\delta_{M}^{\mathbf{K}}$ are extreme points of $\vdist_1(\mathbf{K})$ for all $M\in\N_0$.

\begin{lemma}
	For arbitrary $\mathbf{K}\in\vtransmat$, we have
	\begin{equation*}
	\ex{\vdist_1(\mathbf{K})} \supseteq \{\delta^{\mathbf{K}}_N: N\in\N_0 \}.
	\end{equation*}
\end{lemma}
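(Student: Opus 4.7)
The plan is to fix $M \in \N_0$ and show that $\delta_M^{\mathbf{K}}$ cannot be written as a non-trivial convex combination of two elements of $\vdist_1(\mathbf{K})$. Suppose we have a decomposition $\delta_M^{\mathbf{K}} = t\boldsymbol{\mu} + (1-t)\boldsymbol{\mu}'$ with $t \in (0,1)$ and $\boldsymbol{\mu} = \{\mu_N\}_{N\in\N}, \boldsymbol{\mu}' = \{\mu_N'\}_{N\in\N} \in \vdist_1(\mathbf{K})$. The goal is to conclude $\boldsymbol{\mu} = \boldsymbol{\mu}' = \delta_M^{\mathbf{K}}$.

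First I would exploit the extremality of point masses in the finite simplex at each level $N \ge M$. By definition of $\delta_M^{\mathbf{K}}$, the $N$-th marginal satisfies $\delta_M = t\mu_N + (1-t)\mu_N'$. Since $\mu_N$ and $\mu_N'$ are probability measures on $\llbracket 0,N\rrbracket$ and $\delta_M$ is an extreme point of $\mathcal{M}_1(\llbracket 0,N\rrbracket)$ (because point masses are the extreme points of any finite-dimensional simplex of probability measures), this forces $\mu_N = \mu_N' = \delta_M$ for every $N \ge M$.

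Now both $\boldsymbol{\mu}$ and $\boldsymbol{\mu}'$ are elements of $\vdist_1(\mathbf{K})$ whose $N$-th marginal equals $\delta_M$ for all $N \ge M$. The uniqueness assertion in Lemma~\ref{lem:VTM-point-mass-VID} therefore yields $\boldsymbol{\mu} = \delta_M^{\mathbf{K}} = \boldsymbol{\mu}'$, proving that $\delta_M^{\mathbf{K}}$ is extreme. Since $M \in \N_0$ was arbitrary, the claimed inclusion follows.

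There is essentially no obstacle here: the argument is a routine application of two facts that have already been established, namely (i) that $\delta_M$ is extreme in the simplex of probabilities on $\llbracket 0,N\rrbracket$ for each $N \ge M$, and (ii) the uniqueness of the compatible extension guaranteed by Lemma~\ref{lem:VTM-point-mass-VID}. The only place to be slightly careful is with the degenerate case $M=0$, but there the sequence $\{\delta_0\}_{N\in\N}$ is itself a valid element of $\vdist_1(\mathbf{K})$ (the compatibility relation reduces to $\delta_0(a) = \delta_0(a)$ since $\nu_{N+1}(N+1) = 0$), and the same extremality-plus-uniqueness argument applies verbatim.
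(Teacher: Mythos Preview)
Your proof is correct and follows essentially the same approach as the paper: use the extremality of the point mass $\delta_M$ in each simplex $\mathcal{M}_1(\llbracket 0,N\rrbracket)$ for $N\ge M$, then invoke the uniqueness clause of Lemma~\ref{lem:VTM-point-mass-VID}. Your version is in fact slightly more careful, since you explicitly treat the case $M=0$, which the paper's proof omits.
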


\begin{proof}
Take any $M\in\N$ and suppose that $\boldsymbol{\nu},\boldsymbol{\nu}'\in\vdist_1(\mathbf{K})$ and $\alpha\in (0,1)$ are such that we have $(1-\alpha)\boldsymbol{\nu} + \alpha\boldsymbol{\nu}' = \delta_{M}^{\mathbf{K}}$.
Now write $\boldsymbol{\nu} = \{\nu_N\}_{N\in\N}$ and $\boldsymbol{\nu}' = \{\nu_N'\}_{N\in\N}$.
Note that for any $N\in\N$ with $N\ge M$ we have $(1-\alpha)\nu_N + \alpha\nu_N' = \delta_N$, hence $\nu_N = \nu_N' = \delta_N$.
Thus by Lemma~\ref{lem:VTM-point-mass-VID} we have $\boldsymbol{\nu} = \boldsymbol{\nu}' = \delta_{N}^{\mathbf{K}}$, as claimed.
\end{proof}

For most interesting examples of $\mathbf{K}\in\vtransmat$, it is intuitive that $\vdist_1(\mathbf{K})$ should have more extreme points than just those included in $\{\delta^{\mathbf{K}}_N: N\in\N_0\}$. 
(Indeed, consider Example~\ref{ex:down-from-infty-again}:
The VMC $\mathbf{X}$ starts ``from infinity'', and this does not appear to be a mixture of any other starting states.)
In order to get a more complete understanding of such extreme points of $\vdist_1(\mathbf{K})$, we introduce an additional characterization of VIDs.

\begin{definition}
For any $\mathbf{K}\in\vtransmat$, we set
\begin{equation*}
\vdist_2(\mathbf{K}) := \left\{\{p_a\}_{a\in\N_0}\in [0,1]^{\N_0}: p_a \ge \sum_{M=a}^{\infty}C^{\mathbf{K}}_{M,a}p_{M+1} \text{ for } a\in\N_0, \text{ and } p_0 = 1 \right\}.
\end{equation*}
\end{definition}

\begin{lemma}
For any $\mathbf{K}\in\vtransmat$, the set $\vdist_2(\mathbf{K})$ is compact and convex.
\end{lemma}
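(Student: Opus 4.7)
The plan is to observe that $\vdist_2(\mathbf{K})$ sits inside the product space $[0,1]^{\N_0}$, which is compact by Tychonoff's theorem. Convexity is then almost immediate: both the equality $p_0 = 1$ and each inequality $p_a \ge \sum_{M=a}^{\infty}C^{\mathbf{K}}_{M,a}p_{M+1}$ is an affine condition in the coordinates $\{p_a\}_{a\in\N_0}$, and such conditions are preserved under convex combinations. I would just spell this out explicitly: for $\{p_a\}_{a\in\N_0},\{q_a\}_{a\in\N_0}\in \vdist_2(\mathbf{K})$ and $\alpha\in[0,1]$, the sequence $r_a := (1-\alpha)p_a+\alpha q_a$ clearly satisfies $r_0 = 1$ and
\begin{equation*}
r_a \ge (1-\alpha)\sum_{M=a}^{\infty}C^{\mathbf{K}}_{M,a}p_{M+1} + \alpha \sum_{M=a}^{\infty}C^{\mathbf{K}}_{M,a}q_{M+1} = \sum_{M=a}^{\infty}C^{\mathbf{K}}_{M,a}r_{M+1},
\end{equation*}
so $\{r_a\}_{a\in\N_0}\in\vdist_2(\mathbf{K})$.

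For compactness, since $[0,1]^{\N_0}$ is compact and metrizable, it suffices to show that $\vdist_2(\mathbf{K})$ is sequentially closed in the product topology, i.e., closed under pointwise limits. The condition $p_0=1$ is evidently closed as it is the preimage of $\{1\}$ under a continuous coordinate projection. For the inequality, the key tool is Fatou's lemma applied to counting measure on $\{M\in\N_0:M\ge a\}$: if $\{p_a^{(n)}\}_{a\in\N_0}\to\{p_a\}_{a\in\N_0}$ pointwise, with each $\{p_a^{(n)}\}_{a\in\N_0}\in\vdist_2(\mathbf{K})$, then $p_{M+1}^{(n)}\to p_{M+1}$ for each $M$, and since the summands $C^{\mathbf{K}}_{M,a}p_{M+1}^{(n)}$ are non-negative,
\begin{equation*}
\sum_{M=a}^{\infty}C^{\mathbf{K}}_{M,a}p_{M+1} \le \liminf_{n\to\infty}\sum_{M=a}^{\infty}C^{\mathbf{K}}_{M,a}p_{M+1}^{(n)} \le \liminf_{n\to\infty}p_a^{(n)} = p_a,
\end{equation*}
which gives the desired inequality for the limit.

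There is no serious obstacle here; the entire argument is just a verification that the defining conditions are affine (hence convexity-preserving) and lower-semicontinuous (hence preserved under pointwise limits) in the product topology on $[0,1]^{\N_0}$. The only mild subtlety worth flagging is that the sum $\sum_{M=a}^{\infty}C^{\mathbf{K}}_{M,a}p_{M+1}$ need not a priori be finite for an arbitrary $\{p_a\}_{a\in\N_0}\in[0,1]^{\N_0}$, but this poses no problem because the inequality is being compared against $p_a\in[0,1]$ and so membership in $\vdist_2(\mathbf{K})$ automatically forces convergence of the series to a value in $[0,1]$.
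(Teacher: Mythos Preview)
Your proof is correct and follows essentially the same approach as the paper: show $[0,1]^{\N_0}$ is compact by Tychonoff, show $\vdist_2(\mathbf{K})$ is closed therein, and note the defining conditions are affine so convexity is immediate. You are in fact more careful than the paper, which simply asserts closedness without addressing the infinite sums; your Fatou argument cleanly handles the one point the paper glosses over.
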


\begin{proof}
Observe that $\vdist_2(\mathbf{K})$ is just the solution set to countably many non-strict inequalities, hence it is closed in $[0,1]^{\N_0}$.
But $[0,1]^{\N_0}$ is compact by Tychonoff, hence $\vdist_2(\mathbf{K})$ is compact.
Moreover, the inequalities are linear, so $\vdist_2(\mathbf{K})$ is convex.
\end{proof}

\begin{proposition}\label{prop:D1K-D2K-bijection}
For any $\mathbf{K}\in\vtransmat$, the map $\psi_{\mathbf{K}}:\vdist_{2}(\mathbf{K})\to \vdist_1(\mathbf{K})$ defined via $\psi_{\mathbf{K}}(\{p_a\}_{a\in\N_0}) = \{\nu_{N}\}_{N\in\N}$ with
\begin{equation*}
\nu_N(a) = p_a - \sum_{M=a}^{N-1}C_{M,a}^{\mathbf{K}}p_{M+1}
\end{equation*}
for $N\in\N$ and $a\in\llbracket 0,N\rrbracket$ is a well-defined linear homeomorphism.
Its inverse $\psi_{\mathbf{K}}^{-1}:\vdist_1(\mathbf{K})\to \vdist_{2}(\mathbf{K})$ is given by $\psi_{\mathbf{K}}^{-1}(\{\nu_{N}\}_{N\in\N}) = (\{p_a\}_{a\in\N_0})$ with $p_0 = 1$ and $p_a = \nu_a(a)$ for $a\in \N$.
\end{proposition}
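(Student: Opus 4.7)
The argument has four stages: (i) checking that $\psi_{\mathbf{K}}$ sends $\vdist_2(\mathbf{K})$ into $\vdist_1(\mathbf{K})$; (ii) identifying the inverse and establishing surjectivity; (iii) verifying linearity and continuity; (iv) upgrading to a homeomorphism via compactness. Stages (i) and (ii) are essentially one telescoping identity traversed in opposite directions.

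For (i), fix $\{p_a\}_{a\in\N_0}\in\vdist_2(\mathbf{K})$ and set $\boldsymbol{\nu}=\{\nu_N\}_{N\in\N}:=\psi_{\mathbf{K}}(\{p_a\})$. I would first show each $\nu_N$ is a probability measure on $\llbracket 0,N\rrbracket$: for $a\in\llbracket 1,N\rrbracket$, the defining inequality of $\vdist_2(\mathbf{K})$ yields
\[
\nu_N(a)\ =\ p_a-\sum_{M=a}^{N-1}C^{\mathbf{K}}_{M,a}p_{M+1}\ \ge\ p_a-\sum_{M=a}^{\infty}C^{\mathbf{K}}_{M,a}p_{M+1}\ \ge\ 0,
\]
while total mass is computed by swapping the order in $\sum_{a=0}^{N}\nu_N(a)$ and applying the row-stochasticity identity $\sum_{a=0}^{M}C^{\mathbf{K}}_{M,a}=1$ from \eqref{eqn:CK-identity}; the result telescopes to $p_0=1$, and the $a=0$ coordinate is then nonnegative automatically. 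To verify $\boldsymbol{\nu}\in\vdist_1$ and compatibility with $\mathbf{K}$, a direct subtraction gives
\[
\nu_N(a)-\nu_{N+1}(a)\ =\ C^{\mathbf{K}}_{N,a}\,p_{N+1},
\]
which is nonnegative, and since $\nu_{N+1}(N+1)=p_{N+1}$ by the formula (the defining sum is empty there), this equals $\nu_{N+1}(N+1)\,C^{\mathbf{K}}_{N,a}$, which is precisely the compatibility relation \eqref{eqn:VID-VTM-compatibility}.

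For (ii), start with $\boldsymbol{\nu}=\{\nu_N\}_{N\in\N}\in\vdist_1(\mathbf{K})$, set $p_0:=1$ and $p_a:=\nu_a(a)$ for $a\ge 1$, and run the identity in reverse: compatibility reads $\nu_M(a)-\nu_{M+1}(a)=p_{M+1}\,C^{\mathbf{K}}_{M,a}$, and telescoping from $M=a$ to $M=N-1$ recovers $\nu_N(a)=p_a-\sum_{M=a}^{N-1}C^{\mathbf{K}}_{M,a}p_{M+1}$ for $a\ge 1$; the $a=0$ coordinate is pinned down by $\nu_N(0)=1-\sum_{a=1}^{N}\nu_N(a)$. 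The same rearrangement, combined with $\nu_N(a)\ge 0$ and passage to $N\to\infty$, forces $\sum_{M=a}^{\infty}C^{\mathbf{K}}_{M,a}p_{M+1}\le p_a$, so $\{p_a\}\in\vdist_2(\mathbf{K})$. This simultaneously yields surjectivity and the stated formula for $\psi_{\mathbf{K}}^{-1}$.

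Linearity of $\psi_{\mathbf{K}}$ is immediate from the explicit formula. Each coordinate $\nu_N(a)$ depends continuously on only the finitely many $\{p_b\}_{b=0}^{N+1}$, so $\psi_{\mathbf{K}}$ is continuous in the product topology; the inverse is continuous because its coordinates are the diagonal entries $\nu_a(a)$. Finally, $\vdist_2(\mathbf{K})$ is compact and $\vdist_1(\mathbf{K})$ is Hausdorff (being a subset of a metrizable space), so the continuous bijection $\psi_{\mathbf{K}}$ is automatically a homeomorphism. The only place requiring real care is the boundary coordinate $a=0$: since the compatibility identities are indexed only by $N\in\N$ and there is no ``$\nu_0$'' from which to telescope, consistency of the formula at $a=0$ must instead be enforced through the normalization of the $\nu_N$'s and \eqref{eqn:CK-identity}; everything else is bookkeeping.
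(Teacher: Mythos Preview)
Your proposal is correct and follows essentially the same route as the paper's proof: the same telescoping identity $\nu_N(a)-\nu_{N+1}(a)=C^{\mathbf{K}}_{N,a}p_{N+1}$, the same total-mass computation via \eqref{eqn:CK-identity}, and the same compact--Hausdorff argument for the homeomorphism (the paper runs it through $\psi_{\mathbf{K}}^{-1}$ rather than $\psi_{\mathbf{K}}$, but this is cosmetic). One small caution: nonnegativity of $\nu_N(0)$ is not quite ``automatic'' from total mass plus nonnegativity of the other coordinates---you should instead invoke the $a=0$ case of the defining inequality of $\vdist_2(\mathbf{K})$, exactly as you did for $a\ge 1$.
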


\begin{proof}
Let us show that $\psi_{\mathbf{K}}$ is well-defined.
To do this, first take arbitrary $N\in\N$ and $a\in\llbracket 0,N\rrbracket$, and note that, on the one hand, we have
\begin{equation*}
\nu_N(a) = p_a - \sum_{M=a}^{N-1}C_{M,a}^{\mathbf{K}}p_{M+1} \ge p_a - \sum_{M=a}^{\infty}C_{M,a}^{\mathbf{K}}p_{M+1} \ge 0,
\end{equation*}
while, on the other hand, we have
\begin{equation*}
\nu_N(a) = p_a - \sum_{M=a}^{N-1}C_{M,a}^{\mathbf{K}}p_{M+1} \le p_a \le 1.
\end{equation*}
This shows $0\le \nu_N(a) \le 1$.
Next note that, we have
\begin{equation*}
\nu_N(a) = p_a - \sum_{M=a}^{N-1}C_{M,a}^{\mathbf{K}}p_{M+1} \ge p_a - \sum_{M=a}^{N}C_{M,a}^{\mathbf{K}}p_{M+1} = \nu_{N+1}(a),
\end{equation*}
hence $\nu_{N}(a)\ge \nu_{N+1}(a)$ as needed.
Finally, note that for any $N\in\N$ we have, by switching the order of summation and applying \eqref{eqn:CK-identity}:
\begin{align*}
\sum_{a=0}^{N}\nu_N(a) &=
\sum_{a=0}^{N}\left(p_a - \sum_{M=a}^{N-1}C_{M,a}^{\mathbf{K}}p_{M+1}\right) \\
&= \sum_{a=0}^{N}p_a - \sum_{a=0}^{N}\sum_{M=a}^{N-1}C_{M,a}^{\mathbf{K}}p_{M+1} \\
&= \sum_{a=0}^{N}p_a - \sum_{M=0}^{N-1}p_{M+1}\sum_{a=0}^{M}C_{M,a}^{\mathbf{K}} \\
&= \sum_{a=0}^{N}p_a - \sum_{M=0}^{N-1}p_{M+1} = p_0 = 1.
\end{align*}
Thus, $\psi_{\mathbf{K}}:\vdist_2(\mathbf{K})\to \vdist_1(\mathbf{K})$ is indeed a well-defined map.
It is also immediate from the definition that this map is linear.

Now let us show that $\psi_{\mathbf{K}}$ is injective.
Suppose $\{p_a\}_{a\in\N_0},\{p_a'\}_{a\in\N_0}\in\vdist_2(\mathbf{K})$ have $\psi_{\mathbf{K}}(\{p_a\}_{a\in\N_0}) = \psi_{\mathbf{K}}(\{p_a'\}_{a\in\N_0})$.
Write $\psi_{\mathbf{K}}(\{p_a\}_{a\in\N_0}) = \{\nu_N\}_{N\in\N}$ and $\psi_{\mathbf{K}}(\{p'_a\}_{a\in\N_0}) = \{\nu_N'\}_{N\in\N}$, and note that we have $\nu_N(N) = p_N$ and $\nu_N'(N) = p_N'$ for all $N\in\N$.
Hence, we have $p_a = p_a'$ for all $a\in\N$, and we of course also have $p_0= p_0' = 1$.
Thus, $\{p_a\}_{a\in\N_0}=\{p_a'\}_{a\in\N_0}$ so $\psi_{\mathbf{K}}$ is injective.

To see that $\psi_{\mathbf{K}}$ is surjective, take any $\{\nu_N\}_{N\in\N}\in\vdist_1(\mathbf{K})$, and define $\{p_a\}_{a\in\N_0}$ via $p_0 = 1$ and $p_a = \nu_{a}(a)$ for $a\in\N$.
Then take any $N\in\N$ and $a\in\llbracket 1,N\rrbracket$, and compute, using compatibility:
\begin{align*}
p_a - \sum_{M=a}^{N-1}C_{M,a}^{\mathbf{K}}p_{M+1} &= \nu_a(a) - \sum_{M=0}^{N-1}C_{M,a}^{\mathbf{K}}\nu_{M+1}(M+1) \\
&= \nu_a(a) - \sum_{M=a}^{N-1}(\nu_{M}(a)-\nu_{M+1}(a)) = \nu_N(a).
\end{align*}
By summing over these and taking complements, we also get
\begin{equation*}
p_0 - \sum_{M=0}^{N-1}C_{M,0}^{\mathbf{K}}p_{M+1} = \nu_N(0),
\end{equation*}
hence $\psi_{\mathbf{K}}(\{p_a\}_{a\in\N_0}) = \{\nu_N\}_{N\in\N}$.
Therefore, $\psi_{\mathbf{K}}$ is surjective.
This proves that $\psi_{\mathbf{K}}$ is in fact a bijection.

Observe now that the inverse map $\psi_{\mathbf{K}}^{-1}:\vdist_1(\mathbf{K})\to\vdist_2(\mathbf{K})$ is given exactly by $\psi_{\mathbf{K}}^{-1}(\{\nu_N\}_{N\in\N})= \{p_a\}_{a\in\N_0}$ where $p_0 = 1$ and $p_a = \nu_a(a)$ for $a\in\N$, and that this map is clearly continuous.
Since $\vdist_1(\mathbf{K})$ is compact by Lemma~\ref{lem:dvist-K-cvx-cpt} and $\vdist_2(\mathbf{K})$ is Hausdorff, it is classical that $\psi_{\mathbf{K}}^{-1}$ is in fact a homeomorphism, hence that $\psi_{\mathbf{K}}$ is a homeomorphism.
This finishes the proof.
\end{proof}

Let us build the following diagram to succintly summarize the three different perspectives on VIDs that we have developed thus far:
\begin{center}
	\begin{tikzcd}
		\mathcal{M}_1(\vinitstate) \arrow[d, phantom, "\supseteq", sloped] & \prod_{N\in\N}\mathcal{M}_1(\llbracket 0,N\rrbracket) \arrow[d, phantom, "\supseteq", sloped] \arrow[phantom]{d}[xshift=4ex]{\text{\tiny conv.}} & \left[0,1\right]^{\N_0} \arrow[dd, phantom, "\supseteq", sloped] \arrow[phantom]{dd}[xshift=4ex]{\text{\tiny conv.}}\\
		\vdist_0 \arrow[r, leftrightarrow, "\text{\tiny homeo.}"]  & \vdist_1 \arrow[d, phantom, "\supseteq", sloped] \arrow[phantom]{d}[xshift=4ex]{\text{\tiny conv.}} \\
		& \vdist_1(\mathbf{K}) \arrow[r, leftrightarrow, "\text{\tiny lin. homeo.}"]& \vdist_2(\mathbf{K})
	\end{tikzcd}
\end{center}
In more detail, the space $\vdist_0$ describes the law of an entire Markovian probability distribution on $\vinitstate$, and the space $\vdist_1$ describes only the marginal sequences of such probability distributions; however, we saw in Proposition~\ref{prop:D0-D1-bijection} that these spaces were, in fact, homeomorphic.
Now consider fixing some $\mathbf{K}\in\vtransmat$.
The space $\vdist_1(\mathbf{K})$ describes the (convex) subspace of $\vdist_1$ consisting of the sequences of probability distrubutions which are compatible with $\mathbf{K}$, and the space $\vdist_0(\mathbf{K})$ describes each marginal distribution with a single real number; however, we saw in Proposition~\ref{prop:D1K-D2K-bijection} that these spaces were, in fact, linearly homeomorphic.
(The latter correspondence is predicated on the observation that knowing that $\boldsymbol{\nu}$ is compatible with $\mathbf{K}$ renders much of its information as redundant.)
	
	Now we return to the task at hand of understanding the extreme points of $\vdist_1(\mathbf{K})$.
	The importance of Proposition~\ref{prop:D1K-D2K-bijection} is that it shows that it suffices to find the extreme points of $\vdist_2(\mathbf{K})$ and then simply compute their image under $\psi_{\mathbf{K}}^{-1}$.
	Since $\vdist_2(\mathbf{K})$ is just a subset of $[0,1]^{\N_0}$ which is defined by countably many linear inequalities, its extreme point arise by setting ``as many of them as possible'' to equalities.
	While this is by no means easy in general, the plan can be executed in many simple examples.
	The explicit tools for doing so are the following results:
	
	\begin{lemma}\label{lem:D2K-D1K-extreme}
	The maps
	\begin{equation*}
	\psi_{\mathbf{K}}:\ex{\vdist_{2}(\mathbf{K})}\to \ex{\vdist_{1}(\mathbf{K})}
	\end{equation*}
	and
	\begin{equation*}
	\psi_{\mathbf{K}}:\overline{\ex{\vdist_{2}(\mathbf{K})}}\to \overline{\ex{\vdist_{1}(\mathbf{K})}}
	\end{equation*}
	are well-defined homeomorphisms.
	\end{lemma}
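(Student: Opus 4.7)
The plan is to observe that everything here follows from the abstract fact that an affine (in particular, linear) homeomorphism between compact convex subsets of Hausdorff topological vector spaces automatically restricts to a homeomorphism on extreme points and on their closures. Since Proposition~\ref{prop:D1K-D2K-bijection} has already established that $\psi_{\mathbf{K}}$ is a linear homeomorphism between the compact convex sets $\vdist_2(\mathbf{K})$ and $\vdist_1(\mathbf{K})$, the work is really bookkeeping.

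First I would verify that $\psi_{\mathbf{K}}$ maps $\ex{\vdist_2(\mathbf{K})}$ bijectively onto $\ex{\vdist_1(\mathbf{K})}$. For one direction, take $p\in\ex{\vdist_2(\mathbf{K})}$ and suppose $\psi_{\mathbf{K}}(p) = (1-\alpha)q_1 + \alpha q_2$ for some $q_1,q_2\in\vdist_1(\mathbf{K})$ and $\alpha\in(0,1)$. Applying the linear map $\psi_{\mathbf{K}}^{-1}$ and using its linearity gives $p = (1-\alpha)\psi_{\mathbf{K}}^{-1}(q_1) + \alpha\psi_{\mathbf{K}}^{-1}(q_2)$, and extremality of $p$ forces $\psi_{\mathbf{K}}^{-1}(q_1) = \psi_{\mathbf{K}}^{-1}(q_2) = p$, whence $q_1 = q_2 = \psi_{\mathbf{K}}(p)$. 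The reverse inclusion is the same argument applied to $\psi_{\mathbf{K}}^{-1}$, which is also linear and continuous on $\vdist_1(\mathbf{K})$ (with explicit formula given in Proposition~\ref{prop:D1K-D2K-bijection}). Together these yield $\psi_{\mathbf{K}}(\ex{\vdist_2(\mathbf{K})}) = \ex{\vdist_1(\mathbf{K})}$.

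Next, the restriction of any homeomorphism to a subspace (with its inherited topology) is a homeomorphism onto its image, so the first map is a homeomorphism. For the closure version, recall that any homeomorphism $f:X\to Y$ of topological spaces satisfies $f(\overline{A}) = \overline{f(A)}$ for every $A\subseteq X$; applying this to $f = \psi_{\mathbf{K}}$ (restricted from $\vdist_2(\mathbf{K})$ to $\vdist_1(\mathbf{K})$, both of which are Hausdorff) and to $A = \ex{\vdist_2(\mathbf{K})}$ gives $\psi_{\mathbf{K}}(\overline{\ex{\vdist_2(\mathbf{K})}}) = \overline{\ex{\vdist_1(\mathbf{K})}}$, and the restriction is again a homeomorphism by the same subspace argument.

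There is no real obstacle here; the only thing to be mildly careful about is to invoke linearity of both $\psi_{\mathbf{K}}$ and $\psi_{\mathbf{K}}^{-1}$ when transporting convex combinations back and forth, and to note that the closures of the extreme-point sets are taken inside $\vdist_2(\mathbf{K})$ and $\vdist_1(\mathbf{K})$ respectively (which are themselves closed in their ambient spaces), so that the topological identity $f(\overline{A}) = \overline{f(A)}$ applies cleanly.
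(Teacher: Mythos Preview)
Your proposal is correct and follows essentially the same route as the paper: both argue that linearity of $\psi_{\mathbf{K}}$ and $\psi_{\mathbf{K}}^{-1}$ transports convex decompositions back and forth, forcing extreme points to map to extreme points, and then invoke the fact that a homeomorphism restricts to a homeomorphism on any subspace and carries closures to closures. If anything, you are slightly more careful than the paper in explicitly checking the reverse inclusion $\psi_{\mathbf{K}}^{-1}(\ex{\vdist_1(\mathbf{K})})\subseteq\ex{\vdist_2(\mathbf{K})}$, which is needed for surjectivity onto $\ex{\vdist_1(\mathbf{K})}$ but which the paper leaves implicit.
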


	\begin{proof}
	First let us show that $\psi_{\mathbf{K}}:\ex{\vdist_{2}(\mathbf{K})}\to \ex{\vdist_{1}(\mathbf{K})}$ is well-defined.
	Indeed, suppose that $p = \{p_a\}_{a\in\N_0}\in \ex{\vdist_{2}(\mathbf{K})}$ and that $\boldsymbol{\nu},\boldsymbol{\nu}'\in\vdist_1(\mathbf{K})$ and $\alpha\in (0,1)$ have $(1-\alpha)\boldsymbol{\nu} + \alpha\boldsymbol{\nu}' = \psi_{\mathbf{K}}(p)$.
	Then take $\psi_{\mathbf{K}}^{-1}$ of both sides and use the linearity guaranteed by Proposition~\ref{prop:D1K-D2K-bijection} to get $(1-\alpha)\psi_{\mathbf{K}}^{-1}(\boldsymbol{\nu}) + \alpha\psi_{\mathbf{K}}^{-1}(\boldsymbol{\nu}') = p$.
	Since $p\in\ex{\vdist_{2}(\mathbf{K})}$, this implies $\psi_{\mathbf{K}}^{-1}(\boldsymbol{\nu}) = \psi_{\mathbf{K}}^{-1}(\boldsymbol{\nu}')$, hence $\boldsymbol{\nu} = \boldsymbol{\nu}'$.
	Therefore, $\psi_{\mathbf{K}}(p)\in\ex{\vdist_{1}(\mathbf{K})}$, so $\psi_{\mathbf{K}}:\ex{\vdist_{2}(\mathbf{K})}\to \ex{\vdist_{1}(\mathbf{K})}$ is indeed a well-defined map.
	Since $\psi_{\mathbf{K}}:\vdist_{2}(\mathbf{K})\to \vdist_{1}(\mathbf{K})$ is a homeomorphism by Proposition~\ref{prop:D1K-D2K-bijection}, it follows that $\psi_{\mathbf{K}}:\ex{\vdist_{2}(\mathbf{K})}\to \ex{\vdist_{1}(\mathbf{K})}$ is also a homeomorphism.
	Moreover, $\psi_{\mathbf{K}}:\vdist_{2}(\mathbf{K})\to \vdist_{1}(\mathbf{K})$ being a homeomorphism implies that the same result extends to the closures.
	\end{proof}

	\begin{corollary}\label{cor:psiK-deltaK-property}
	For any $\mathbf{K}\in\vtransmat$ and $N\in\N$, the element $\psi_{\mathbf{K}}^{-1}(\delta_N^{\mathbf{K}})$ is the unique $\{p_a\}_{a\in\N_0}\in \vdist_2(\mathbf{K})$ satisfying $p_N = 1$ and $p_{a} = 0$ for all $a > N$.
	\end{corollary}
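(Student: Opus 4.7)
The plan is to combine the explicit formulas for $\psi_{\mathbf{K}}$ and its inverse from Proposition~\ref{prop:D1K-D2K-bijection} with the defining property of $\delta_N^{\mathbf{K}}$ coming from Lemma~\ref{lem:VTM-point-mass-VID}; no deep new idea is required, so I expect this to be a short bookkeeping argument.

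First, for the existence half, I would write $\delta_N^{\mathbf{K}} = \{\nu_M\}_{M\in\N}$ and apply the formula for $\psi_{\mathbf{K}}^{-1}$ from Proposition~\ref{prop:D1K-D2K-bijection} to identify $\psi_{\mathbf{K}}^{-1}(\delta_N^{\mathbf{K}})$ as the sequence $\{p_a\}_{a\in\N_0}$ with $p_0 = 1$ and $p_a = \nu_a(a)$ for $a\in\N$. Since Lemma~\ref{lem:VTM-point-mass-VID} guarantees $\nu_M = \delta_N$ for every $M\ge N$, reading off values at indices $a\ge N$ gives $p_N = \delta_N(N) = 1$ and $p_a = \delta_N(a) = 0$ for $a > N$, exactly as required.

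For uniqueness, I would start with any $\{p_a\}_{a\in\N_0}\in \vdist_2(\mathbf{K})$ satisfying the two listed conditions and set $\{\nu_M\}_{M\in\N} := \psi_{\mathbf{K}}(\{p_a\}_{a\in\N_0})\in \vdist_1(\mathbf{K})$. For each $M\ge N$, the explicit formula from Proposition~\ref{prop:D1K-D2K-bijection} yields $\nu_M(N) = p_N - \sum_{L=N}^{M-1} C_{L,N}^{\mathbf{K}} p_{L+1}$, and since $p_{L+1} = 0$ whenever $L\ge N$, every term in the sum vanishes, so $\nu_M(N) = p_N = 1$. Because $\nu_M$ is a probability measure on $\llbracket 0, M\rrbracket$, this forces $\nu_M = \delta_N$ for all $M\ge N$.

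Finally, the uniqueness clause of Lemma~\ref{lem:VTM-point-mass-VID} gives $\{\nu_M\}_{M\in\N} = \delta_N^{\mathbf{K}}$, and the bijectivity of $\psi_{\mathbf{K}}$ from Proposition~\ref{prop:D1K-D2K-bijection} then forces $\{p_a\}_{a\in\N_0} = \psi_{\mathbf{K}}^{-1}(\delta_N^{\mathbf{K}})$. The only point that needs slight care is making sure one is entitled to conclude $\nu_M = \delta_N$ from $\nu_M(N) = 1$ alone, but this is immediate from the probability-measure property of $\nu_M$ already built into membership in $\vdist_1$.
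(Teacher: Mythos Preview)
Your proof is correct and follows exactly the approach the paper intends: the paper's own proof simply says the result is immediate from Lemma~\ref{lem:VTM-point-mass-VID} and the explicit description of $\psi_{\mathbf{K}}^{-1}$ in Proposition~\ref{prop:D1K-D2K-bijection}, and you have carefully unpacked precisely those two ingredients.
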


\begin{proof}
Immediate from Lemma~\ref{lem:VTM-point-mass-VID} and from the characterization of the map $\psi_{\mathbf{K}}^{-1}:\vdist_1(\mathbf{K})\to\vdist_2(\mathbf{K})$ given in the proof of Proposition~\ref{prop:D1K-D2K-bijection}.
\end{proof}

At the end of this section we will see some examples of using these tools to compute the extreme points of $\vdist_1(\mathbf{K})$ for some specific choices of $\mathbf{K}$.

\subsection{Equilibrium and Stationary Distributions}\label{subsec:equi-stat}
Fix $N\in\N$ and let $K\in\transmat_N$ be any TM.
We say that $\nu\in\mathcal{M}_1(\llbracket 0,N\rrbracket)$ is a \textit{stationary distribution} if we have $\nu K = \nu$.
Of course, the name comes from the fact that, if $X$ is the MC with canonical data $(\nu,K)$, then it is a stationary stochastic process.
It is classical that, if $K\in\transmat_N$ is irreducible and aperiodic, then there exists a unique stationary distribution $\nu$, and it moreover has the property that if $X$ is a MC with canonical data $(\nu',K)$ for any $\nu'\in\mathcal{M}_1(\llbracket 0,N\rrbracket)$, then the law of $X(i)$ converges to $\nu$ in distribution as $i\to\infty$.
For this latter reason, stationary distributions are sometimes called \textit{equilibrium distributions}.
In this subsection we explore analogous notions of equilibrium and stationary distributions for VTMs.
Interestingly, in the virtual case it becomes important to distinguish between these notions.

For any $\mathbf{K}\in\vtransmat$, let us say that $\boldsymbol{\nu} = \{\nu_N\}_{N\in\N}\in\vdist_1$ is an \textit{equilibrium distribution for $\mathbf{K} = \{K_N\}_{N\in\N}\in\vtransmat$} if we have $\nu_N K_N = \nu_N$ for all $N\in\N$, and let us say that it is a \textit{stationary distribution for $\mathbf{K}$} if it is an equilibrium distribution for $\mathbf{K}$ which is also compatible with $\mathbf{K}$.
We write $\vdist_1^{\text{eq}}(\mathbf{K})$ for the space of VIDs that are equilibrium distributions for $\mathbf{K}$, and write $\vdist_1^{\text{st}}(\mathbf{K})$ for the space of VIDs that are stationary distributions for $\mathbf{K}$.

\begin{lemma}
	For any $\mathbf{K}\in\vtransmat$, the spaces $\vdist_1^{\text{eq}}(\mathbf{K})$ and $\vdist_1^{\text{st}}(\mathbf{K})$ are compact and convex.
\end{lemma}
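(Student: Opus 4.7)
The plan is to observe that both sets are cut out of the ambient compact convex space $\vdist_1$ by families of linear equalities, so convexity will be immediate and compactness will follow from closedness.

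First I would treat $\vdist_1^{\text{eq}}(\mathbf{K})$. Writing $\mathbf{K} = \{K_N\}_{N\in\N}$, I would express
\begin{equation*}
\vdist_1^{\text{eq}}(\mathbf{K}) = \bigcap_{N\in\N}\bigcap_{b\in\llbracket 0,N\rrbracket}\left\{\{\nu_N\}_{N\in\N}\in\vdist_1 : \sum_{a=0}^{N}\nu_N(a)K_N(a,b) = \nu_N(b)\right\}.
\end{equation*}
Each defining condition is a single linear equation in the finitely many coordinates $\nu_N(0),\ldots,\nu_N(N)$, and the map $\boldsymbol{\nu}\mapsto \nu_N$ is continuous from $\vdist_1$ (which carries the product topology coming from $\prod_{N\in\N}\mathcal{M}_1(\llbracket 0,N\rrbracket)$) to the finite-dimensional space $\mathcal{M}_1(\llbracket 0,N\rrbracket)$. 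Hence each set in the intersection is closed and convex in $\vdist_1$, and a countable intersection of closed convex sets is closed and convex. Since $\vdist_1$ is compact (it is a closed subset of the compact product $\prod_{N\in\N}\mathcal{M}_1(\llbracket 0,N\rrbracket)$), this already gives that $\vdist_1^{\text{eq}}(\mathbf{K})$ is compact and convex.

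For $\vdist_1^{\text{st}}(\mathbf{K})$, I would simply write
\begin{equation*}
\vdist_1^{\text{st}}(\mathbf{K}) = \vdist_1^{\text{eq}}(\mathbf{K})\cap \vdist_1(\mathbf{K}),
\end{equation*}
and appeal to Lemma~\ref{lem:dvist-K-cvx-cpt}, which already provides that $\vdist_1(\mathbf{K})$ is compact and convex. The intersection of two compact convex subsets of the Hausdorff space $\vdist_1$ is again compact and convex, which finishes the result.

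There is no real obstacle here: the only small thing to verify carefully is that evaluation $\boldsymbol{\nu}\mapsto \nu_N$ is continuous and that the fixed-point equation $\nu_N K_N = \nu_N$, viewed coordinatewise, is a closed condition—both of which are immediate from the product topology and the finiteness of the row sum defining $(\nu_N K_N)(b)$.
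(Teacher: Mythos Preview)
Your proposal is correct and follows essentially the same approach as the paper: write $\vdist_1^{\text{eq}}(\mathbf{K})$ as an intersection over $N\in\N$ of the closed linear conditions $\nu_N K_N=\nu_N$ inside the compact convex space $\vdist_1$, and then obtain $\vdist_1^{\text{st}}(\mathbf{K})$ as $\vdist_1^{\text{eq}}(\mathbf{K})\cap\vdist_1(\mathbf{K})$ via Lemma~\ref{lem:dvist-K-cvx-cpt}. The only difference is that you spell out the coordinatewise form of the stationarity equation and the continuity of the evaluation maps, which the paper leaves implicit.
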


\begin{proof}
	By Lemma~\ref{lem:dvist-K-cvx-cpt} and the fact $\vdist_1^{\text{st}}(\mathbf{K}) = \vdist_1^{\text{eq}}(\mathbf{K})\cap \vdist_1(\mathbf{K})$, it suffices to show that $\vdist_1^{\text{eq}}(\mathbf{K})$ is compact and convex.
	To do this, we simply write
	\begin{equation*}
	\vdist_1^{\text{eq}}(\mathbf{K}) = \bigcap_{N\in\N}\left\{\{\nu_N\}_{N\in\N}\in \vdist_1 : \nu_N K_N= \nu_N \right\},
	\end{equation*}
	and the result follows.
\end{proof}

Let us now give two results which justify the nomenclature of the terms just introduced.
We say that a VTM $\mathbf{K} = \{K_N\}_{N\in\N}\in\vtransmat$ is \textit{irreducible} if, for each $N\in\N$ we have $K_N(i,0) = 0$ for all $i\in \llbracket 1,N\rrbracket$ and and that $\llbracket 1,N\rrbracket$ is a communicating class of $K_N$.
We say $\mathbf{K}$ is \textit{aperiodic} if $K_N$ is aperiodic for each $N\in\N$.
The proofs of both results are immediate from the definitions and from the analagous results for classical MCs.

\begin{lemma}\label{lem:VTM-irred-aperiodic}
	If $\mathbf{K} = \{K_N\}_{N\in\N}\in\vtransmat$ is an irreducible and aperiodic VTM, then there exists a unique equilibrium distribution $\boldsymbol{\nu} = \{\nu_N\}_{N\in\N}\in\vdist_1$ for $\mathbf{K}$.
	Moreover, if $\mathbf{X} = \{X_N\}_{N\in\N}$ is any VMC with VTM $\mathbf{K}$, then for each $N\in\N$, the law of $X_N(i)$ converges to $\nu_{N}$ in distribution as $i\to\infty$.
\end{lemma}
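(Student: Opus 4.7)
The plan is to reduce both claims to the classical theory of finite-state irreducible aperiodic Markov chains. For each fixed $N\in\N$, the irreducibility and aperiodicity hypotheses on $\mathbf{K}$ guarantee that $K_N$, viewed as a transition matrix on $\llbracket 1,N\rrbracket$, is an irreducible aperiodic stochastic matrix on a finite state space; classical Perron--Frobenius theory then yields a unique stationary distribution $\nu_N$ supported on $\llbracket 1,N\rrbracket$ (which we extend by $\nu_N(0)=0$ so as to view it as an element of $\mathcal{M}_1(\llbracket 0,N\rrbracket)$). I would then set $\boldsymbol{\nu} := \{\nu_N\}_{N\in\N}$. Uniqueness of $\boldsymbol{\nu}$ as an equilibrium distribution is then immediate once membership in $\vdist_1$ is established, since uniqueness holds at each finite level.

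The heart of the argument is showing $\boldsymbol{\nu}\in\vdist_1$, i.e., the monotonicity $\nu_N(a)\ge \nu_{N+1}(a)$ for every $N\in\N$ and $a\in\llbracket 0,N\rrbracket$. For this I would use the VTM identity $P_N(K_{N+1})=K_N$ together with the explicit block formula \eqref{eqn:TM-projection}. Concretely, define the candidate measure $\widetilde{\nu}_N$ on $\llbracket 1,N\rrbracket$ by $\widetilde{\nu}_N(a) := \nu_{N+1}(a)/(1-\nu_{N+1}(N+1))$ (note that $\nu_{N+1}(N+1)<1$, because irreducibility forces $\nu_{N+1}$ to be strictly positive on $\llbracket 1,N+1\rrbracket$, which is a set of at least two elements). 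A direct algebraic verification, substituting \eqref{eqn:TM-projection} into $\sum_a \widetilde{\nu}_N(a)K_N(a,b)$ and using stationarity of $\nu_{N+1}$ for $K_{N+1}$ (which in particular gives $\sum_{a\in\llbracket 1,N\rrbracket}\nu_{N+1}(a)K_{N+1}(a,N+1) = \nu_{N+1}(N+1)(1-K_{N+1}(N+1,N+1))$), shows $\widetilde{\nu}_N K_N = \widetilde{\nu}_N$. By uniqueness, $\widetilde{\nu}_N = \nu_N$, and the monotonicity $\nu_N(a)=\nu_{N+1}(a)/(1-\nu_{N+1}(N+1))\ge \nu_{N+1}(a)$ follows instantly for $a\in\llbracket 1,N\rrbracket$, while it is trivial at $a=0$.

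For the convergence statement, I would simply apply the classical convergence theorem for finite-state irreducible aperiodic Markov chains: for each $N$, regardless of the initial distribution of $X_N$, the law of $X_N(i)$ converges to the unique stationary distribution of $K_N$, which is $\nu_N$. No uniformity in $N$ is needed.

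The main obstacle is the algebraic verification in the second paragraph, particularly handling the corner case $K_{N+1}(N+1,N+1)=1$ of \eqref{eqn:TM-projection} (where, however, irreducibility of $K_{N+1}$ on $\llbracket 1,N+1\rrbracket$ forces $N=0$ or forces the row to differ, so this case can be ruled out or handled separately). Everything else follows routinely from classical results.
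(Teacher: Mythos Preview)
Your proposal is correct. The paper itself does not spell out a proof of this lemma at all; it simply declares that the result is ``immediate from the definitions and from the analogous results for classical MCs.'' Your argument supplies exactly the one step that is not entirely automatic, namely the verification that $\boldsymbol{\nu}=\{\nu_N\}_{N\in\N}$ actually lies in $\vdist_1$ (the pointwise monotonicity $\nu_N(a)\ge\nu_{N+1}(a)$). Your approach---showing that the renormalized restriction $\widetilde{\nu}_N(a)=\nu_{N+1}(a)/(1-\nu_{N+1}(N+1))$ is $K_N$-stationary and hence equals $\nu_N$ by uniqueness---is clean and correct, and your handling of the edge case $K_{N+1}(N+1,N+1)=1$ is right: irreducibility of $K_{N+1}$ on $\llbracket 1,N+1\rrbracket$ with $N+1\ge 2$ rules it out. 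The convergence assertion is, as you say, a direct application of the classical finite-state ergodic theorem at each level, with no coupling across levels required.
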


\begin{lemma}
	Suppose that $\mathbf{K}\in\vtransmat$ is a VTM with $\boldsymbol{\nu}\in\vdist_1$ a stationary distribution.
	Then, the VMC $\mathbf{X} = \{X_N\}_{N\in\N}$ with canonical data $(\boldsymbol{\nu},\mathbf{K})$ has the property that $X_N$ is a stationary stochastic process for each $N\in\N$.
\end{lemma}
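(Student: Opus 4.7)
The plan is to reduce this immediately to the classical fact that a finite-state Markov chain whose initial distribution is a left-eigenvector of its transition matrix is a stationary stochastic process. The content of the virtual statement is almost purely definitional: a ``stationary distribution'' in the virtual sense was defined to be an equilibrium distribution that is \emph{also} compatible with $\mathbf{K}$, and compatibility is precisely what is required to invoke the representation theorem.

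First I would use Theorem~\ref{thm:VMC-VID-VTM-bijection} (equivalently, Proposition~\ref{prop:VMC-from-VIS-VTM-existence}) to realize $\mathbf{X} = \{X_N\}_{N\in\N}$ as a VMC whose canonical data are $(\boldsymbol{\nu},\mathbf{K})$; this step uses only the compatibility half of the hypothesis. Then for fixed $N\in\N$, the description of canonical data in Lemma~\ref{lem:VIS-VTM-from-VMC} tells me that $X_N$ is a classical Markov chain in $\llbracket 1,N\rrbracket$ with cemetery $0$, one-step transition matrix $K_N$, and one-dimensional initial law $\nu_N\in\mathcal{M}_1(\llbracket 0,N\rrbracket)$.

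Next I would invoke the equilibrium half of the hypothesis, namely $\nu_N K_N = \nu_N$. By the classical theorem for finite-state Markov chains, this implies that every one-dimensional marginal of $X_N$ equals $\nu_N$, and then the Markov property upgrades this to shift-invariance of every finite-dimensional distribution. Hence $X_N$ is a stationary stochastic process; since $N\in\N$ was arbitrary, the conclusion follows.

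There is essentially no obstacle here. The only step that might warrant care is a purely notational check: one must confirm that the transition matrix $K_N$, viewed as an element of $\transmat_N$ (so extended by the identity on states outside $\llbracket 0,N\rrbracket$), and the initial distribution $\nu_N$ (supported on $\llbracket 0,N\rrbracket$) produce the same stochastic process whether one regards it as a chain on $\N_0$ or on $\llbracket 0,N\rrbracket$. Once that identification is made explicit, the result is immediate from the classical theory, and no genuinely virtual argument is needed beyond the application of the representation theorem.
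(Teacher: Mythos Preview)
Your proposal is correct and matches the paper's approach exactly: the paper states that the proof is ``immediate from the definitions and from the analogous results for classical MCs,'' which is precisely the reduction you outline. The only content is unpacking the definition of stationary distribution (compatibility to invoke the representation theorem, plus the equilibrium condition $\nu_N K_N = \nu_N$ at each level) and then applying the classical fact for finite-state chains, just as you describe.
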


\subsection{Examples}\label{subsec:Ex-3}
Let us see some examples of the ideas developed in this section.

\begin{example}
	Consider the VMC $\mathbf{X}$ on $(\Omega,\F,\P)$ constructed via Example~\ref{ex:down-from-infty}, whose VTM $\mathbf{K}$ is given in Example~\ref{ex:down-from-infty-again}, where $F$ and $\{L_N\}_{N\in\N}$ are left general.
	
	As a first step, let us compute the elements of the set $\vdist_1(\mathbf{K})$.
	By Choquet's theorem, it suffices to find the extreme points of this set.
	To do this, note that we have $C_{N,N}^{\mathbf{K}} = 1$ for all $N\in\N_0$, hence $\vdist_2(\mathbf{K})$ is exactly the space of all non-decreasing sequences in $[0,1]$ with $p_0 = 1$.
	By Lemma~\ref{lem:decreasing-seq-space-ex-2}, we know that the extreme points of this space are exactly the sequences $\{1^N0^{\infty}:N\in\N\}\cup\{1^{\infty}\}$.
	Moreover, we have $\psi_{\mathbf{K}}(1^{N+1}0^{\infty}) = \delta_{N}^{\mathbf{K}}$ for all $N\in\N_0$ by Lemma~\ref{cor:psiK-deltaK-property}, and also $\lim_{N\to\infty}\delta_N^{\mathbf{K}} = \psi_{\mathbf{K}}(1^{\infty})$ which is extreme by Lemma~\ref{lem:D2K-D1K-extreme}.
	Of course, $\lim_{N\to\infty}\delta_N^{\mathbf{K}}$ is just the VID introduced in Example~\ref{ex:down-from-infty-again}.
	Therefore, we have
	\begin{equation*}
	\ex{\vdist_1(\mathbf{K})} = \{\delta_N^{\mathbf{K}} : N\in\N_0\}\cup\left\{\lim_{N\to\infty}\delta_N^{\mathbf{K}}\right\}
	\end{equation*}
	In other words, this VTM has an extremal VID corresponding to starting in each finite level $N\in\N$ as well as a unique extremal VID corresponding to starting at infinity.
	
	Next let us find the points of $\vdist_1^{\text{eq}}(\mathbf{K})$.
	As long as $q_N > 0$ for all $N\in\N$, it follows that $\mathbf{K}$ is irreducible and aperiodic, hence by Lemma~\ref{lem:VTM-irred-aperiodic} that $\#(\vdist_1^{\text{eq}}(\mathbf{K})) = 1$.
	An easy calculation shows the unique element $\boldsymbol{\nu} = \{\nu_N\}_{N\in\N}\in\vdist_1^{\text{eq}}(\mathbf{K})$ is given by
	\begin{equation*}
	\nu_N(a) = z_N^{-1}\prod_{M=1}^{a-1}q_M^{-1} \qquad \text{ where }\qquad
	z_N = \sum_{a=1}^{N}\prod_{M=1}^{a-1}q_M^{-1},
	\end{equation*}
	for $N\in\N$ and $a\in\llbracket 0,N\rrbracket$.
	
	Finally, we find $\vdist_1^{\text{st}}(\mathbf{K})$.
	Note that for $N\in\N$ and $a=1$ we have
	\begin{align*}
	\nu_N(1) &= z_{N}^{-1} > z_{N+1}^{-1} = \nu_{N+1}(1) + \nu_{N+1}(N+1)C_{N,1}^{\mathbf{K}}
	\end{align*}
	which contradicts \eqref{eqn:VID-VTM-compatibility}, hence $\boldsymbol{\nu}$ is not compatible with $\mathbf{K}$.
	Therefore, $\vdist_1^{\text{st}}(\mathbf{K})$ is empty.
	In particular, we see that $\mathbf{K}$ admits equilibrium distributions but no stationary distributions.
\end{example}

\begin{example}
	Consider the VMC $\mathbf{X}$ on $(\Omega,\F,\P_0)$ constructed via Example~\ref{ex:splitting-VMC}, whose VTM $\mathbf{K}$ is given in Example~\ref{ex:splitting-VMC-again}.
	Recall that $\{\ell_N\}_{N\in\N}$ is arbitrary and does not affect the VTM.
	
	To begin, let us compute $\vdist_1(\mathbf{K})$.
	Note that we have $C_{1,1}^{\mathbf{K}} = 1$ as well as $C_{N,N-1}^{\mathbf{K}} = 1$ for all $N\in\N$ with $N\ge 2$, hence
	\begin{equation*}
	\vdist_2(\mathbf{K}) = \{\{p_a\}_{a\in\N_0}\in[0,1]^{\N_0}: p_1 \ge p_2+p_3 \text{ and } p_a \ge p_{a+2} \text{ for } a\ge 2 \}.
	\end{equation*}
	Now we find its extreme points.
	If $p_1 = 0$ then this forces $p_2=p_3 = 0$, hence also $p_a = 0$ for all $a\in\N$, and this point is clearly extreme.
	If $p_1 = 1$, then the resulting point is extreme only when at most one of $p_2$ or $p_3$ is equal to 1; hence the terms of some parity are identically equal to zero, while the terms of the other parity form a non-decreasing sequence whose extremality we can characterize via the results of Appendix~\ref{app:monotone}.
	If the even indices are identically zero, then the extreme points of the odd indices are characterized by Lemma~\ref{lem:decreasing-seq-space-ex}, and if the odd indices are identically zero, then the extreme points of the even indices are characterized by Lemma~\ref{lem:decreasing-seq-space-ex-2}.
	Moreover, it is easy to show that there are no other extreme points.
	Transforming these back to $\vdist_1(\mathbf{K})$ via $\psi_{\mathbf{K}}$, we find
	\begin{equation*}
	\ex{\vdist_1(\mathbf{K})} = \{\delta_N^{\mathbf{K}} : N\in\N_0\}\cup\left\{\lim_{N\to\infty}\delta_{2N}^{\mathbf{K}},\lim_{N\to\infty}\delta_{2N+1}^{\mathbf{K}}\right\}.
	\end{equation*}
	Of course, the VIDs $\lim_{N\to\infty}\delta_{2N}^{\mathbf{K}}$ and $\lim_{N\to\infty}\delta_{2N+1}^{\mathbf{K}}$ both correspond to ``starting at infinity'' but in two different senses.
	
	This point deserves further elaboration.
	Recall the representation of this VID given in Example~\ref{ex:splitting-VMC}, which states that the sample paths of this VMCs represent repeatedly choosing, uniformly at random, one of two different paths coming down from infinity.
	These two VIDs then correspond respectively to starting at the two different ``trailheads''; geometrically speaking, $\lim_{N\to\infty}\delta_{2N}^{\mathbf{K}}$ and $\lim_{N\to\infty}\delta_{2N+1}^{\mathbf{K}}$ correspond to starting infinitesimally left of the origin and infinitesimally right of the origin, respectively.
	At the origin, the corresponding VID is in fact
	\begin{equation*}
	\{\P_0\circ X_N(0)^{-1} \}_{N\in\N} = \frac{1}{2}\left(\lim_{N\to\infty}\delta_{2N}^{\mathbf{K}} + \lim_{N\to\infty}\delta_{2N+1}^{\mathbf{K}}\right).
	\end{equation*}
	This dispels the plausible but false notion that, for any VTM $\mathbf{K}\in\vtransmat$ arising in the setting of Lemma~\ref{lem:construct-VMC-from-MP}, all the elements of $\ex{\vdist_1(\mathbf{K})}\setminus \{\delta_N^{\mathbf{K}}: N\in\N_0\}$ are of the form $\{\P_x\circ X_N(0)^{-1} \}_{N\in\N}$ for some $x\in S$ which is an accumulation point of $\{L_N\}_{N\in\N}$.
	
	Next let us find the points of $\vdist_1^{\text{eq}}(\mathbf{K})$.
	It is clear that $\mathbf{K}$ is irreducible and aperiodic, hence by Lemma~\ref{lem:VTM-irred-aperiodic} that $\#(\vdist_1^{\text{eq}}(\mathbf{K})) = 1$.
	Then note that $K_N$ is doubly-stochastic for each $N\in\N$, hence $\vdist_1^{\text{eq}}(\mathbf{K}) = \{\vuniform\}$.
	
	To see that $\vdist_1^{\text{st}}(\mathbf{K})$ is empty, it suffices to show that $\vuniform$ is not compatible with $\mathbf{K}$.
	Indeed, for any $N\in\N$ with $N\ge 2$ and $a=N-1$ we have
	\begin{equation*}
	\nu_N(N-1) = \frac{1}{N} \neq \frac{2}{N+1} = \nu_{N+1}(N-1) + \nu_{N+1}(N+1)C^{\mathbf{K}}_{N,N-1},
	\end{equation*}
	whence the result.
\end{example}

\begin{example}\label{ex:uniform-VTM}
	Define the VTM $\mathbf{K} = \{K_N\}_{N\in\N}$ via
	\begin{equation*}
	K_N = \begin{bmatrix}
	1/N & 1/N & \cdots & 1/N & 1/N \\
	1/N & 1/N & \cdots & 1/N & 1/N \\
	\vdots & \vdots & \ddots & \vdots & \vdots \\
	1/N & 1/N & \cdots & 1/N & 1/N \\
	1/N & 1/N & \cdots & 1/N & 1/N \\
	\end{bmatrix}
	\end{equation*}
	for all $N\in\N$.
	Since $K_N(a,b) \to 0$ as $N\to\infty$ for all $a\in\N$ and $b\in\N_0$, we have by Lemma~\ref{lem:VK-K-characterization} that $\mathbf{K}\notin \iota(K)$.
	Intuitively speaking, $\mathbf{K}$ is the VTM representing the ``random walk on the infinite clique''.
	
	This example is somewhat more complicated than the previous examples in that we can only get a partial understanding of $\vdist_1(\mathbf{K})$.
	To do this, note that we have $C_{N,a}^{\mathbf{K}} = 1/N$ for all $N\in\N$ and $a\in \llbracket 1,N\rrbracket$ also $C_{0,0}^{\mathbf{K}} = 1$ by convention.
	Hence:	
	\begin{align*}
	\vdist_2(\mathbf{K}) &:= \left\{\{p_a\}_{a\in\N_0}\in [0,1]^{\N_0}: \begin{matrix}
	p_0 = 1 \ge \sum_{M=1}^{\infty}M^{-1}p_{M+1}, \text{ and}\\ p_a \ge \sum_{M=a}^{\infty}M^{-1}p_{M+1} \text{ for } a\in\N
	\end{matrix}\right\}
	\end{align*}
	In principle one can use the map $\psi_{\mathbf{K}}$ to transform this to an understanding of $\vdist_1(\mathbf{K})$, but, unfortunately, we are not able to characterize the extreme points of this set.
	
	A much easier question is to understand the equilibrium and stationary distributions for $\mathbf{K}$.
	Note that $\mathbf{K}$ is clearly irreducible and aperiodic, and moreover that, for each $N\in\N$, the matrix $K_N$ is doubly-stochastic.
	Hence we have $\vdist_{1}^{\text{eq}}(\mathbf{K}) = \{\vuniform\}$.
	One easily checks that $\vuniform$ is in fact compatible with $\mathbf{K}$, hence also that $\vdist_{1}^{\text{st}}(\mathbf{K}) = \{\vuniform\}$.
\end{example}

\subsection{The Virtual Birkhoff Polytope}\label{subsec:Birkhoff-polytope}

As we saw in Example~\ref{ex:VMC-from-virtual-perm}, VMCs can be seen as a generalization of virtual permutations.
We also know the classical Birkhoff-von Neumann theorem that, in a fixed dimension, the (closed) convex hull of the TMs of all permutation matrices is exactly the space of doubly-stochastic matrices.
Hence, it is natural to ask whether, in some suitable sense, the closed convex hull of the VTMs of all virtual permutations coincides with the space of ``doubly-stochastic'' VTMs.
As we show in this section, the natural generalization is false, but we are able to gain some understanding of the convexity structure of this space.

For each $N\in\N$, write $\birkhoff_{N}\subseteq \transmat_{N}$ for the subspace of all doubly-stochastic transition matrices (\textit{DSTMs}) on $\llbracket 1,N\rrbracket$; these are often called the \textit{Birkhoff polytopes}.
Also write $\birkhoff\subseteq \transmat$ for the subspace of all doubly-stochastic transition matrices (\textit{DSTMs}).
We say that a VTM $\mathbf{K} = \{K_N\}_{N\in\N}\in\vtransmat$ is \textit{doubly-stochastic} if $K_N$ is doubly-stochastic for each $N\in\N$, or also that $\mathbf{K}$ is a \textit{DSVTM}, and we write $\vbirkhoff\subseteq \vtransmat$ for the space of all DSVTMs; this is called the \textit{virtual Birkhoff polytope} although, as we will see, it is not a polytope at all.
Note that $\vbirkhoff$ is equivalently characterized as the space of all VTMs which admit the virtual uniform measure $\vuniform$ as an equilibrium distribution.

Note, in particular, that the first row and column of $K_N$ can be ignored for each $N\in\N$, hence can always use the \texttt{bmatrix} environments to describe them as block matrices.
Moreover, the projection operation $P_N:\birkhoff_{N+1}\to \birkhoff_{N}$ can be written as:
\begin{equation}\label{eqn:DS-K-proj-formula}
\begin{bmatrix}
A & u \\
v^{{\text{T}}} & p
\end{bmatrix} \mapsto \begin{cases}
\begin{bmatrix}
A + \frac{1}{1-p}uv^{\text{T}} \\
\end{bmatrix} &\text{ if } p < 1 \\
& \\
\begin{bmatrix}
\ A\  \\
\end{bmatrix}  &\text{ if } p = 1
\end{cases},
\end{equation}
which follows from \eqref{eqn:K-proj-formula}, since \eqref{eqn:TM-block-form} being doubly-stochastic necessarily implies $w=0$ and $q=0$, and since $p=0$ implies $u=v=0$.
Now we develop the requisite properties of these projection maps.

\begin{lemma}\label{lem:DS-VTM-proj-cts}
	For any $N\in\N$, the map $P_N:\birkhoff_{N+1}\to \birkhoff_{N}$ is continuous.
\end{lemma}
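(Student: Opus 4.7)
The plan is to exploit the explicit formula \eqref{eqn:DS-K-proj-formula} and treat the two cases $p<1$ and $p=1$ separately, with the only nontrivial matter being continuity at the boundary $p=1$. Since $\birkhoff_{N+1}$ consists of matrices whose only nonzero data is an $(N+1)\times(N+1)$ block (the other diagonal entries being identically 1), we may work in this finite-dimensional ambient space where the topology of pointwise convergence coincides with the usual Euclidean topology.

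First I would observe that on the open subset $U:=\{K\in\birkhoff_{N+1}:K(N+1,N+1)<1\}$, the formula $K\mapsto A+\frac{1}{1-p}uv^{\mathrm T}$ is a rational function of the entries of $K$ with nonvanishing denominator, hence continuous. So the only place where continuity can fail is at a point $K_{\infty}\in\birkhoff_{N+1}$ with $p_{\infty}:=K_{\infty}(N+1,N+1)=1$. At such a point, double-stochasticity of $K_{\infty}$ forces the off-diagonal entries of its last row and last column to vanish, i.e.\ $u_{\infty}=0$ and $v_{\infty}=0$, so $P_N(K_{\infty})=A_{\infty}$ according to the second branch of \eqref{eqn:DS-K-proj-formula}.

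The key step will be the estimate governing the behavior of a sequence $K^{(n)}\to K_{\infty}$ with $p^{(n)}<1$. Since each $K^{(n)}$ is doubly-stochastic, the row sum and column sum identities give $\sum_i u^{(n)}_i = 1-p^{(n)}$ and $\sum_j v^{(n)}_j = 1-p^{(n)}$. As $u^{(n)},v^{(n)}$ have nonnegative entries, this yields the coordinatewise bound $0\le u^{(n)}_i,v^{(n)}_j\le 1-p^{(n)}$, and therefore each entry of $\tfrac{1}{1-p^{(n)}}u^{(n)}(v^{(n)})^{\mathrm T}$ is bounded in absolute value by $\tfrac{(1-p^{(n)})^2}{1-p^{(n)}}=1-p^{(n)}$. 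Since $p^{(n)}\to 1$, this correction term vanishes, so
\begin{equation*}
P_N(K^{(n)})=A^{(n)}+\tfrac{1}{1-p^{(n)}}u^{(n)}(v^{(n)})^{\mathrm T}\longrightarrow A_{\infty}=P_N(K_{\infty}),
\end{equation*}
which handles the last case.

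The main (mild) obstacle is exactly this estimate at $p=1$: a priori one worries that dividing by $1-p$ blows up, but the doubly-stochastic constraint forces $u$ and $v$ to be of order $1-p$ simultaneously, so the quotient $\tfrac{1}{1-p}uv^{\mathrm T}$ is of order $1-p$ and tends to zero. This is the crucial feature that distinguishes $\birkhoff_{N+1}$ from the full $\transmat_{N+1}$, where (as the remark preceding the lemma shows) the analogous projection on $\transmat_{N+1}$ is genuinely discontinuous.
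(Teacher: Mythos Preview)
Your proposal is correct and follows essentially the same approach as the paper: both split into the cases $p<1$ (rational function, hence continuous) and $p=1$, and both dispatch the latter by using double-stochasticity to get $\sum_i u_i=\sum_j v_j=1-p$, so that the correction $\tfrac{1}{1-p}uv^{\mathrm T}$ is of order $1-p$ and vanishes in the limit. The only cosmetic difference is that the paper packages this via $\|u\|_1\|v\|_1=(1-p)^2$ and the Frobenius norm, whereas you argue entrywise; the substance is identical.
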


\begin{proof}
	Suppose that $\{K_n\}_{n\in\N}$ and $K$ in $\birkhoff_{N+1}$ have $K_n\to K$.
	Using the block form \eqref{eqn:TM-block-form}, write $A_n,u_n,v_n$, and $p_n$ for the blocks of $K_n$ for each $n\in\N$, and write $A,u,v$, and $p$ for the blocks of $K$.
	Then note that we have $A_n\to A,u_n\to u,v_n\to v$, and $p_n\to p$ in their respective topologies as $n\to\infty$.
	If $p<1$, then we have $p_n<1$ for sufficiently large $n\in\N$, hence we clearly have $P_N(K_n)\to P_n(K)$ as $n\to\infty$.
	In instead $p=1$, then we recall the fact that all matrix norms on $\transmat_N$ are equivalent; hence it suffices to show $\|P_N(K_n)- P_n(K)\|_{\text{F}}\to 0$ as $n\to\infty$ where $\|\cdot\|_{\text{F}}$ denotes the Frobenius norm.
	To do this, we simply bound
	\begin{align*}
	\|P_N(K_n)- P_n(K)\|_{\text{F}} &\le \|A_n-A\|_{\text{F}}+\frac{1}{1-p_n}\|u_nv_n^{\text{T}}\|_{\text{F}} \\
	&= \|A_n-A\|_{\text{F}}+\frac{1}{1-p_n}\|u_n\|_2 \|v_n\|_2 \\
	&\le \|A_n-A\|_{\text{F}}+ \frac{1}{1-p_n}\|u_n\|_1 \|v_n\|_1 \\
	&\le \|A_n-A\|_{\text{F}}+ 1- p_n.
	\end{align*}
	Since the right side goes to zero as $n\to\infty$, the result is proved.
\end{proof}

\begin{lemma}
	The map $\iota:\birkhoff\to\vbirkhoff$ is a continuous injection.
\end{lemma}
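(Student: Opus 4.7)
The injectivity is inherited directly from Lemma~\ref{lem:iota-K-injection} upon restriction to $\birkhoff$. Since $\vbirkhoff$ carries the relative topology from the product topology on $\prod_{N\in\N}\birkhoff_N$, continuity of $\iota$ reduces, via the universal property of products, to continuity of each coordinate map $K\mapsto P_N(K)$ from $\birkhoff$ to $\birkhoff_N$, so the substantive task is to establish this latter statement for each fixed $N\in\N$.

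The plan is to factor these maps through the finite Birkhoff polytopes and invoke Lemma~\ref{lem:DS-VTM-proj-cts}. By the projectivity identity of Lemma~\ref{lem:K-proj-iterate}, for any $M\ge N$ we have $P_N = Q_{N,M}\circ P_M$ on $\birkhoff$, where $Q_{N,M}:=P_N\circ P_{N+1}\circ\cdots\circ P_{M-1}:\birkhoff_M\to\birkhoff_N$ is a composition of one-step finite projections and hence continuous by iterated application of Lemma~\ref{lem:DS-VTM-proj-cts}. Thus, given a sequence $K^{(n)}\to K$ pointwise in $\birkhoff$, it suffices to show $P_M(K^{(n)})\to P_M(K)$ in $\birkhoff_M$ as $n\to\infty$ for each $M$. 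Expanding entrywise and using that $K(a,0)=0$ for $a\in\N$ by doubly-stochasticity, this amounts to showing that for each $a,b\in\llbracket 1,M\rrbracket$ the first-return probabilities $\P_{K^{(n)},a}(X_{\tau_M}=b)$ converge to $\P_{K,a}(X_{\tau_M}=b)$, where $\tau_M = \inf\{i\ge 1:X_i\in\llbracket 1,M\rrbracket\}$.

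To carry out this entrywise convergence I propose a truncation argument: decompose according to the maximum height reached before $\tau_M$. For a cutoff $L\ge M$, the contribution from excursions confined to $\llbracket 1,L\rrbracket$ is a finite sum of products of entries of $K$ indexed in $\llbracket 1,L\rrbracket\times\llbracket 0,L\rrbracket$, hence a continuous function of $K$ in the pointwise topology, so this piece converges along the sequence. The remainder is bounded by $\P_{K,a}(\max_{1\le i\le \tau_M}X_i > L)$. I expect the main obstacle to be a uniform tightness estimate: that $\sup_n\P_{K^{(n)},a}(\max_{1\le i\le \tau_M}X_i > L)\to 0$ as $L\to\infty$. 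For a single $K\in\birkhoff$, this holds because well-definedness of $\iota$ on $\birkhoff$ forces the chain started in $\llbracket 1,M\rrbracket$ to return to $\llbracket 1,M\rrbracket$ almost surely, rendering the excursion height a.s.\ finite. To promote this uniformly along $\{K^{(n)}\}$ I would use that for any fixed $L$, the tail row-masses $\sum_{b>L}K^{(n)}(a,b) = 1-\sum_{b\le L}K^{(n)}(a,b)$ converge (since the complement is a finite sum) to the corresponding tail for $K$, and then combine a renewal-style count of the number of excursions leaving $\llbracket 1,L\rrbracket$ before returning to $\llbracket 1,M\rrbracket$ with the column-sum constraint of doubly-stochasticity to produce the needed uniform bound.
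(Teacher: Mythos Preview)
The paper's own proof is two lines: it invokes Lemma~\ref{lem:iota-K-injection} for injectivity (as you do) and then simply asserts that continuity ``follows from Lemma~\ref{lem:DS-VTM-proj-cts}.'' Note that Lemma~\ref{lem:DS-VTM-proj-cts} only gives continuity of the one-step maps $P_N:\birkhoff_{N+1}\to\birkhoff_N$, not of $P_N:\birkhoff\to\birkhoff_N$; the paper does not spell out how one passes from the former to the latter. You correctly recognize that this passage is where the real content lies and set out to supply it directly.

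Your argument, however, stops short of doing so. The truncation scheme is reasonable, and you are right that the contribution from excursions confined to $\llbracket 1,L\rrbracket$ depends continuously on finitely many entries of $K$. The gap is the uniform remainder bound $\sup_n\P_{K^{(n)},a}(\max_{i\le\tau_M}X_i>L)\to 0$. The ingredients you list---pointwise convergence of row-tails $\sum_{b>L}K^{(n)}(a,b)$ for each fixed $a$, and an unspecified use of the column-sum constraint---do not assemble into an inequality. Row-tail convergence controls a single step from a fixed state, not the probability that an entire excursion above $\llbracket 1,M\rrbracket$ ever exceeds height $L$; and you never say what estimate the column constraint is meant to produce. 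Worse, there is a structural obstruction: if $K$ is, say, simple random walk on $\mathbb{Z}^3$ relabeled by $\N$, then $K\in\birkhoff$ but the chain is transient, so $\P_{K,a}(\tau_M=\infty)>0$ and $(P_M(K))(a,0)>0$, meaning $P_M(K)\notin\birkhoff_M$. Thus without some additional hypothesis the map $\iota:\birkhoff\to\vbirkhoff$ is not even well-defined on all of $\birkhoff$, and your appeal to ``well-definedness of $\iota$'' to get a.s.\ finiteness of the excursion height is circular. Whatever the intended scope of the lemma, the tightness step needs an actual mechanism rather than a sketch.
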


\begin{proof}
	That $\iota:\birkhoff\to\vbirkhoff$ is an injection follows from Lemma~\ref{lem:iota-K-injection} in that $\iota:\transmat\to\vtransmat$ is an injection, and that $\iota:\birkhoff\to\vbirkhoff$ is continuous follows from Lemma~\ref{lem:DS-VTM-proj-cts}.
\end{proof}

\begin{lemma}
	The space $\vbirkhoff$ is compact.
\end{lemma}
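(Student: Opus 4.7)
The plan is to exhibit $\vbirkhoff$ as a closed subset of a compact product space and invoke Tychonoff. First I would observe that for each $N\in\N$ the Birkhoff polytope $\birkhoff_N$ is a classical compact polytope, as it is a bounded, closed subset of the finite-dimensional space $\R^{N\times N}$ cut out by finitely many linear (in)equalities. Hence by Tychonoff's theorem the product $\prod_{N\in\N}\birkhoff_N$ is a compact Hausdorff space.

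Next I would write
\begin{equation*}
\vbirkhoff = \bigcap_{N\in\N}\left\{\{K_M\}_{M\in\N} \in \prod_{M\in\N}\birkhoff_M : P_N(K_{N+1}) = K_N\right\}
\end{equation*}
and argue that each set in this countable intersection is closed in $\prod_{M\in\N}\birkhoff_M$. This is the step where one must be slightly careful, since the analogous claim is \emph{false} in $\vtransmat$ (per the Remark showing $P_N:\transmat\to\transmat_N$ can fail to be continuous at matrices with $K(N+1,N+1)=1$). However, Lemma~\ref{lem:DS-VTM-proj-cts} supplies exactly the missing continuity for the doubly-stochastic restriction $P_N:\birkhoff_{N+1}\to\birkhoff_N$; this is precisely where double-stochasticity is used, since the boundary vectors $u$ and $v$ in the block form \eqref{eqn:TM-block-form} are forced to vanish as $p\to 1$, tempering the singularity in \eqref{eqn:DS-K-proj-formula}. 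Given that continuity, the equalizer $\{(\ldots,K_N,K_{N+1},\ldots):P_N(K_{N+1})=K_N\}$ is closed as the preimage of the diagonal under a continuous map into the Hausdorff space $\birkhoff_N\times\birkhoff_N$.

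Combining these two facts, $\vbirkhoff$ is a closed subset of the compact space $\prod_{N\in\N}\birkhoff_N$, and hence is itself compact. The only substantive point, and the reason the argument does not transfer verbatim from the proof of compactness of $\vchain$ earlier in the paper, is the appeal to Lemma~\ref{lem:DS-VTM-proj-cts}; once that continuity is available, the argument is the standard Tychonoff-plus-closed-equalizer template.
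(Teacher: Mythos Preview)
Your proof is correct and is exactly the argument the paper intends: the paper's proof reads simply ``Immediate from Lemma~\ref{lem:DS-VTM-proj-cts},'' and you have spelled out the standard Tychonoff-plus-closed-equalizer template that this one-liner is shorthand for. Your remark about why Lemma~\ref{lem:DS-VTM-proj-cts} is the essential input (and why the analogous argument fails for $\vtransmat$) is a nice gloss that the paper leaves implicit.
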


\begin{proof}
	Immediate from Lemma~\ref{lem:DS-VTM-proj-cts}.
\end{proof}

\begin{example}
	Consider
	\begin{equation*}
	K' = \begin{bmatrix}
	1 & 0 & 0 \\
	0 & 0 & 1 \\
	0 & 1 & 0
	\end{bmatrix} \qquad K'' = \begin{bmatrix}
	0 & 0 & 1 \\
	0 & 1 & 0 \\
	1 & 0 & 0
	\end{bmatrix}
	\end{equation*}
	in $\transmat_{3}$, and note these correspond to some elements $\mathbf{K} := \{K_N\}_{N\in\N}:= \iota(K)$ and $\mathbf{K}' := \{K_N'\}_{N\in\N} := \iota(K')$ in $\vtransmat$.
	Since we have
	\begin{equation*}
	P_2(K') = P_2(K'') = \begin{bmatrix}
	1 & 0 \\
	0 & 1
	\end{bmatrix},
	\end{equation*}
	and also that $K_M = K_N$ and $K_M' = K_N'$ for all $M\ge N$, it follows that $\mathbf{K},\mathbf{K}'\in \vbirkhoff$.
	However, the matrix $K = \frac{1}{2}(K'+K'')$ satisfies
	\begin{equation*}
	K = \begin{bmatrix}
	1/2 & 0 & 1/2 \\
	0 & 1/2 & 1/2 \\
	1/2 & 1/2 & 0 \\
	\end{bmatrix} \qquad P_2(K) = \begin{bmatrix}
	3/4 & 1/4 \\
	1/4 & 3/4
	\end{bmatrix}.
	\end{equation*}
	Since $P_2(\frac{1}{2}(K'+K'')) \neq \frac{1}{2}(P_2(K')+P_2(K''))$, it follows that $\vbirkhoff$ is not convex.
\end{example}

Despite the fact that $\vbirkhoff$ is not convex as in the classical case, we can still understand some of its convexity structure.
The following result characterizes exactly when the line segment between two points of $\vbirkhoff$ is fully contained in $\vbirkhoff$, and it implies that the relative interior of any such segment is either fully contained in or fully disjoint from $\vbirkhoff$.

\begin{proposition}\label{prop:vBirkhoff-convexity}
Take any $\mathbf{K},\mathbf{K}'\in\vbirkhoff$ and write $\mathbf{K} = \{K_N\}_{N\in\N}$ and $\mathbf{K}'= \{K_N'\}_{N\in\N}$.
Then, the set of $\alpha\in [0,1]$ for which we have $(1-\alpha)\mathbf{K} + \alpha \mathbf{K}' \in \vbirkhoff$ is $[0,1]$ if for all $N\in\N$ we have at least one of the properties
\begin{enumerate}
\item[(i)] $K_{N+1}(N+1,N+1) = 1$,
\item[(ii)] $K_{N+1}'(N+1,N+1) = 1$,
\item[(iii)] $K_{N+1}(N+1,N+1) < 1$ and $K_{N+1}(N+1,N+1) < 1$, and
\begin{equation*}
\frac{K_{N+1}(a,N+1)}{1-K_{N+1}(N+1,N+1)} = \frac{K_{N+1}'(a,N+1)}{1-K_{N+1}'(N+1,N+1)}
\end{equation*}
for all $a\in\llbracket 1,N\rrbracket$, or
\item[(iv)] $K_{N+1}(N+1,N+1) < 1$ and $K_{N+1}(N+1,N+1) < 1$, and
\begin{equation*}
\frac{K_{N+1}(N+1,b)}{1-K_{N+1}(N+1,N+1)} = \frac{K_{N+1}'(N+1,b)}{1-K_{N+1}'(N+1,N+1)}
\end{equation*}
for all $b\in\llbracket 1,N\rrbracket$,
\end{enumerate}
and it is $\{0,1\}$ if there is some $N\in\N$ for which we have none of these properties.
\end{proposition}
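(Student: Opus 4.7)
My plan is to reduce the claim to a level-by-level algebraic identity on the block decomposition \eqref{eqn:DS-K-proj-formula} and exploit the rational structure of $P_N$ on $\birkhoff_{N+1}$. Since double-stochasticity is preserved by convex combinations, the mixture $(1-\alpha)\mathbf{K} + \alpha\mathbf{K}'$ lies in $\vbirkhoff$ if and only if, for every $N\in\N$,
\[
P_N\bigl((1-\alpha)K_{N+1} + \alpha K_{N+1}'\bigr) = (1-\alpha)K_N + \alpha K_N'.
\]
The endpoints $\alpha\in\{0,1\}$ always satisfy this, so the problem is to decide, for each $N$, which $\alpha\in(0,1)$ work at level $N$, and then intersect over $N$.

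Writing
\[
K_{N+1} = \begin{bmatrix} A & u \\ v^T & p \end{bmatrix}, \qquad K_{N+1}' = \begin{bmatrix} A' & u' \\ (v')^T & p' \end{bmatrix},
\]
I would split on whether $p,p'$ equal $1$. In the cases (i) and (ii) of the proposition, double-stochasticity forces $u=v=0$ (resp.\ $u'=v'=0$), and using $1-((1-\alpha)p+\alpha p') = \alpha(1-p')$ (resp.\ $(1-\alpha)(1-p)$), a short calculation cancels the quadratic-in-$\alpha$ factors in the projection formula and shows both sides of the required identity agree for every $\alpha$. So conditions (i) and (ii) each suffice at level $N$.

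The main case is $p,p' < 1$. Here I would introduce the normalized vectors $\tilde u = u/(1-p)$, $\tilde v = v/(1-p)$, $\tilde u' = u'/(1-p')$, $\tilde v' = v'/(1-p')$ and carry out the expansion of the projection formula applied to the mixture. After clearing the denominator $(1-\alpha)(1-p)+\alpha(1-p')$, the cross terms collapse via the cancellation $uv'^T + u'v^T - \tfrac{1-p'}{1-p}uv^T - \tfrac{1-p}{1-p'}u'(v')^T = (1-p)(1-p')(\tilde u - \tilde u')(\tilde v'-\tilde v)^T$, yielding the clean identity
\[
P_N\bigl((1-\alpha)K_{N+1} + \alpha K_{N+1}'\bigr) - (1-\alpha)K_N - \alpha K_N' = \frac{\alpha(1-\alpha)(1-p)(1-p')}{(1-\alpha)(1-p) + \alpha(1-p')}\,(\tilde u - \tilde u')(\tilde v' - \tilde v)^T.
\]
The right-hand side is a rank-at-most-one matrix whose scalar prefactor is strictly positive on $(0,1)$, so it vanishes identically in $\alpha\in(0,1)$ if and only if either $\tilde u = \tilde u'$ or $\tilde v = \tilde v'$, which are precisely conditions (iii) and (iv); otherwise the identity fails for every $\alpha\in (0,1)$. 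Thus at each fixed $N$ the segment either lies entirely in $\vbirkhoff$ or meets it only at the endpoints, and intersecting over $N$ gives the stated all-or-nothing dichotomy.

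The main obstacle will be spotting the outer-product factorization hidden in the expansion: the raw computation produces a rational function of $\alpha$ with four cross-terms in $uv^T, u(v')^T, u'v^T, u'(v')^T$, each with its own denominator, and the key insight is that clearing $(1-p)(1-p')$ and rewriting in the normalized vectors reveals the rank-one form $(\tilde u-\tilde u')(\tilde v'-\tilde v)^T$. Once this is in hand, the characterization via (i)--(iv) and the dichotomy at each level are immediate.
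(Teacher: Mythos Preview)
Your proposal is correct and follows essentially the same route as the paper: both reduce to a level-by-level condition, dispose of the degenerate cases (i)--(ii) by noting that $p=1$ forces $u=v=0$, and in the main case discover that the obstruction factors as a product of two differences, which vanishes precisely under (iii) or (iv). The only difference is packaging: the paper works entry-by-entry and finds the scalar factorization $\bigl(\tfrac{x}{1-z}-\tfrac{x'}{1-z'}\bigr)\bigl(\tfrac{y}{1-z}-\tfrac{y'}{1-z'}\bigr)=0$, whereas you work in block form and obtain the equivalent rank-one identity $(\tilde u-\tilde u')(\tilde v'-\tilde v)^T=0$; your formulation makes the step ``for every $(a,b)$ one factor vanishes $\Rightarrow$ one vector vanishes'' immediate, while the paper leaves that small inference implicit.
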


\begin{proof}
To begin, let us define, for $\mathbf{K} = \{K_N\}_{N\in\N}\in\prod_{N\in\N}\birkhoff_{N},N\in\N$, and $a\in\llbracket 0,N\rrbracket$, the values
\begin{equation*}
F^{\mathbf{K}}_{N,a,b} := \begin{cases}
\frac{K_{N+1}(a,N+1)K_{N+1}(N+1,b)}{1-K_{N+1}(N+1,N+1)}, &\text{ if } K_{N+1}(N+1,N+1) < 1, \\
0, &\text{ if } K_{N+1}(N+1,N+1) = 1.
\end{cases}
\end{equation*}
Then note that $\mathbf{K}$ is in $\vbirkhoff$ if and only if for all $N\in\N$ and $a,b\in\llbracket 1,N\rrbracket$ we have $K_{N}(a,b)= K_{N+1}(a,b) + F^{\mathbf{K}}_{N,a,b}$.
Thus for $\mathbf{K} = \{K_N\}_{N\in\N}$ and $\mathbf{K}'= \{K_N'\}_{N\in\N}$ in $\vbirkhoff$ and $\alpha\in [0,1]$, we have
\begin{align*}
K_{N}(a,b) &= K_{N+1}(a,b) + F^{\mathbf{K}}_{N,a,b}, \text{ and }\\
K_{N}'(a,b) &= K_{N+1}'(a,b) + F^{\mathbf{K}'}_{N,a,b},
\end{align*}
for all $N\in\N$ and $a,b\in\llbracket 1,N\rrbracket$, and we want to understand when we have
\begin{align*}
&(1-\alpha)K_{N}(a,b)+ \alpha K_{N}'(a,b) \\
&\qquad= (1-\alpha)K_{N+1}(a,b) + \alpha K_{N+1}(a,b) + F^{(1-\alpha)\mathbf{K}+\alpha \mathbf{K}'}_{N,a,b}
\end{align*}
for all $N\in\N$ and $a,b\in\llbracket 1,N\rrbracket$.
Plugging the first two equations into the third, we see that $(1-\alpha)\mathbf{K}+\alpha \mathbf{K}'\in\vbirkhoff$ if and only if
\begin{equation}\label{eqn:birkhoff-convexity-condition}
(1-\alpha)F^{\mathbf{K}}_{N,a,b} + \alpha F^{\mathbf{K}'}_{N,a,b}= F^{(1-\alpha)\mathbf{K}+\alpha \mathbf{K}'}_{N,a,b}
\end{equation}
for all $N\in\N$ and $a,b\in\llbracket 1,N\rrbracket$.
We use this characterization to complete the two directions of the proof.

Now let us show that, for each $N\in\N$, any of the properties enumerated above implies \eqref{eqn:birkhoff-convexity-condition} for all $a,b\in\llbracket 1,N\rrbracket$.
First suppose both (i) and (ii), that $K_{N+1}(N+1,N+1) = K_{N+1}'(N+1,N+1) = 1$, and note that this implies
\begin{equation*}
F^{(1-\alpha)\mathbf{K}+\alpha \mathbf{K}'}_{N,a,b} = F^{\mathbf{K}}_{N,a,b} = F^{\mathbf{K}'}_{N,a,b} = 0,
\end{equation*}
as needed.
Now suppose that (i) holds but (ii) fails, that is, that $K_{N+1}(N+1,N+1) =1$ and $K_{N+1}'(N+1,N+1) < 1$.
Then we have $(1-\alpha)K_{N+1}(N+1,N+1) +\alpha K_{N+1}'(N+1,N+1) < 1$, and also $K_{N+1}(a,N+1) = K_{N+1}(N+1,b) =0$ by double-stochasticity.
Hence we can compute:
\begin{align*}
&F^{(1-\alpha)\mathbf{K}+\alpha \mathbf{K}'}_{N,a,b} \\
&= \frac{((1-\alpha)\cdot 0+\alpha K_{N+1}'(a,N+1))((1-\alpha)\cdot 0+\alpha K_{N+1}'(N+1,b))}{1-(1-\alpha)\cdot1-\alpha K_{N+1}'(N+1,N+1)} \\
&= \frac{\alpha^2 K_{N+1}'(a,N+1)K_{N+1}'(N+1,b)}{\alpha(1- K_{N+1}'(N+1,N+1))} \\
&= \frac{\alpha K_{N+1}'(a,N+1)K_{N+1}'(N+1,b)}{1- K_{N+1}'(N+1,N+1)} = \alpha F_{N,a,b}^{\mathbf{K}'},
\end{align*}
as needed.
The same argument works for the case that (i) fails and (ii) holds.
Now suppose that both (i) and (ii) fail.
Then, \eqref{eqn:birkhoff-convexity-condition} is equivalent to
\begin{equation}\label{eqn:algebra-miracle}
(1-\alpha)\frac{xy}{1-z} + \alpha\frac{x'y'}{1-z'} = \frac{((1-\alpha)x+\alpha x')((1-\alpha)y+\alpha y')}{1-(1-\alpha)z-\alpha z'}
\end{equation}
for the assignment
\begin{align*}
x &= K_{N+1}(a,N+1), \qquad x' = K_{N+1}'(a,N+1) \\
y &= K_{N+1}(N+1,b), \qquad y' = K_{N+1}'(N+1,b) \\
z &= K_{N+1}(N+1,N+1), \qquad z' = K_{N+1}'(N+1,N+1).
\end{align*}
Now a small arithmetic miracle occurs, as it turns out that \eqref{eqn:algebra-miracle} is equivalent to
\begin{equation}\label{eqn:algebra-miracle-result}
\left(\frac{x}{1-z} - \frac{x'}{1-z'}\right)\left(\frac{y}{1-z} - \frac{y'}{1-z'}\right) = 0,
\end{equation}
which, remarkably, does not depend on $\alpha$.
Now note that either (iii) or (iv) imply that at least one factor in \eqref{eqn:algebra-miracle-result} is zero, hence \eqref{eqn:algebra-miracle} is satisfied, as needed.

Conversely, it is easy to show for each $N\in\N$ that \eqref{eqn:birkhoff-convexity-condition} holding for all $a,b\in\llbracket 1,N\rrbracket$ implies one of the enumerated properties:
If both (i) and (ii) fail, then, as we remarked above, \eqref{eqn:birkhoff-convexity-condition} is equivalent to \eqref{eqn:algebra-miracle-result}, hence at least one of the factors must be equal to zero.
This shows that for all $a,b\in\llbracket 1,N\rrbracket$ we must have either
\begin{equation*}
\frac{K_{N+1}(a,N+1)}{1-K_{N+1}(N+1,N+1)} = \frac{K_{N+1}'(a,N+1)}{1-K_{N+1}'(N+1,N+1)}
\end{equation*}
or
\begin{equation*}
\frac{K_{N+1}(N+1,b)}{1-K_{N+1}(N+1,N+1)} = \frac{K_{N+1}'(N+1,b)}{1-K_{N+1}'(N+1,N+1)},
\end{equation*}
which in turn this implies that at least one of (iii) or (iv) holds.
\end{proof}

The following is a direct translation of the conditions (i) to (iv) in terms of the analogous conditions for VTMs of virtual permutations.

\begin{corollary}\label{cor:vperm-convexity}
	Take any virtual permutations $\boldsymbol{\sigma},\boldsymbol{\sigma}'\in\vperm$, and write $\boldsymbol{\sigma} = \{\sigma_N\}_{N\in\N}$ and $\boldsymbol{\sigma}' = \{\sigma_N'\}_{N\in\N}$.
	Then the set of $\alpha\in [0,1]$ for which we have $(1-\alpha)\mathbf{K}(\boldsymbol{\sigma})+\alpha\mathbf{K}(\boldsymbol{\sigma}')\in\vbirkhoff$ is $[0,1]$ if for all $N\in\N$ we have at least one of
	\begin{enumerate}
		\item[(i)] $\sigma_{N}(N) = N$,
		\item[(ii)] $\sigma_{N}'(N) = N$,
		\item[(iii)] $(\sigma_N)^{-1}(N) = (\sigma_N')^{-1}(N)$, or
		\item[(iv)] $\sigma_N(N) = \sigma_N'(N)$,
	\end{enumerate}
	and it is $\{0,1\}$ if there is some $N\in\N$ for which we have none of these properties.
\end{corollary}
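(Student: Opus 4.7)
The plan is to derive Corollary~\ref{cor:vperm-convexity} as a direct specialization of Proposition~\ref{prop:vBirkhoff-convexity} to the case $\mathbf{K} = \mathbf{K}(\boldsymbol{\sigma})$ and $\mathbf{K}' = \mathbf{K}(\boldsymbol{\sigma}')$. The key observation is that each $K_N(\boldsymbol{\sigma})$ is the permutation matrix of $\sigma_N$, in the sense that for $a,b\in\llbracket 1,N\rrbracket$ the entry $K_N(\boldsymbol{\sigma})(a,b)$ equals $\ind\{\sigma_N(a)=b\}$ (and the zeroth row/column act trivially). This collapses the real-valued conditions in the proposition into purely combinatorial statements about the underlying permutations.

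First I would verify the four equivalences, working at a fixed index $N\in\N$ in the proposition (which corresponds to the index $N+1$ of the corollary, modulo the trivial case $N=1$ for the corollary where all conditions hold vacuously). Condition (i) $K_{N+1}(\boldsymbol{\sigma})(N+1,N+1)=1$ says exactly that $N+1$ is a fixed point of $\sigma_{N+1}$, matching corollary (i) under the reindexing; condition (ii) is identical for $\boldsymbol{\sigma}'$. For condition (iii), note that since $K_{N+1}(\boldsymbol{\sigma})$ has entries in $\{0,1\}$, the hypothesis $K_{N+1}(\boldsymbol{\sigma})(N+1,N+1)<1$ forces it to equal $0$, so both denominators in (iii) are $1$ and the identity reduces to $K_{N+1}(\boldsymbol{\sigma})(a,N+1) = K_{N+1}(\boldsymbol{\sigma}')(a,N+1)$ for all $a\in\llbracket 1,N\rrbracket$. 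Each column $K_{N+1}(\cdot,N+1)$ of a permutation matrix has a single $1$ located at row $\sigma_{N+1}^{-1}(N+1)$, and since $N+1$ is not a fixed point by hypothesis, this preimage lies in $\llbracket 1,N\rrbracket$; hence the identity is equivalent to $(\sigma_{N+1})^{-1}(N+1) = (\sigma_{N+1}')^{-1}(N+1)$. Condition (iv) translates analogously via the row $K_{N+1}(N+1,\cdot)$ to $\sigma_{N+1}(N+1) = \sigma_{N+1}'(N+1)$.

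Finally, I would observe that the corollary's quantification over $N\in\N$ differs from the proposition's only by including the automatically satisfied case $N=1$ (where $\sigma_1(1)=1$ holds for any virtual permutation, so (i) is vacuous there), and apply Proposition~\ref{prop:vBirkhoff-convexity} to obtain the conclusion. There is essentially no obstacle in this argument beyond careful bookkeeping of the indexing; the main content is already contained in the arithmetic identity \eqref{eqn:algebra-miracle-result} used in the proof of the proposition, and what remains here is simply to read off its consequences in the $\{0,1\}$-valued setting.
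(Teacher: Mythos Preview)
Your proposal is correct and is exactly the approach the paper takes: the paper's own proof is a single sentence stating that the corollary is ``a direct translation of the conditions (i) to (iv) in terms of the analogous conditions for VTMs of virtual permutations,'' and your write-up simply spells out that translation carefully, including the harmless index shift and the trivial $N=1$ case.
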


\begin{example}
	To see that Corollary~\ref{cor:vperm-convexity} can give non-trivial information, let $\boldsymbol{\sigma} = \{\sigma_N\}_{N\in\N}$ and $\boldsymbol{\sigma}' = \{\sigma_N'\}_{N\in\N}$ in $\vperm$ be the virtual permutations whose cycle structures are given by
	\begin{equation*}
	\sigma_N = \begin{cases}
	(1)(23)(45)\cdots (N-1 \, N), &\text{ if } N \text{ odd}, \\
	(1)(23)(45)\cdots (N-2 \, N-1)(N), &\text{ if } N \text{ even}, \\
	\end{cases}
	\end{equation*}
	and
	\begin{equation*}
	\sigma_N' = \begin{cases}
	(12)(34)(56)\cdots (N-2 \, N-1)(N), &\text{ if } N \text{ odd}, \\
	(12)(34)(56)\cdots (N-1 \, N), &\text{ if } N \text{ even}, \\
	\end{cases}
	\end{equation*}
	for all $N\in\N$ with $N\ge 2$.
	Note that (i) is true for odd $N$ and that (ii) is true for even $N$, and therefore that we have $(1-\alpha)\mathbf{K}(\boldsymbol{\sigma})+\alpha\mathbf{K}(\boldsymbol{\sigma}')\in\vbirkhoff$ for all $\alpha\in [0,1]$.
	In particular, observe that $\frac{1}{2}\mathbf{K}(\boldsymbol{\sigma}) + \frac{1}{2}\mathbf{K}(\boldsymbol{\sigma}')$ is exactly the VTM given in Example~\ref{ex:VMC-RW}, that is, the VTM of the random walk on $\N$ for which the boundary state 1 reflects and holds with equal probability.
\end{example}

Finally, recall that the \textit{kernel} of a set $S$ in a real vector space is the set of all $x\in S$ such that for all $x'\in S$ and $\alpha\in [0,1]$ we have $(1-\alpha)x+\alpha x' \in S$; this is denoted $\ker(S)$.
Then, $S$ is convex if and only if $\ker(S) = S$ and $S$ is called \textit{star-shaped} if $\ker(S)$ is non-empty.
Hence, the kernel encodes some information about the convexity structure of possibly non-convex sets.
Our last result shows that the kernel of the virtual Birkhoff polytope is a sinlgeton.

\begin{theorem}\label{thm:virtual-Birkhoff-von-Neumann}
We have $\ker(\vbirkhoff) = \{\identity\,\}$, for $\identity = \{I_N\}_{N\in\N}$ the VTM consisting of the identity matrix $I_N$ for each level $N\in\N$.
\end{theorem}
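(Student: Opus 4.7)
The plan is to prove the two inclusions $\{\identity\}\subseteq\ker(\vbirkhoff)$ and $\ker(\vbirkhoff)\subseteq\{\identity\}$ using Proposition~\ref{prop:vBirkhoff-convexity}.

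The first inclusion is immediate: for $\mathbf{K}=\identity$ and any $\mathbf{K}'\in\vbirkhoff$, condition~(i) of Proposition~\ref{prop:vBirkhoff-convexity} holds at every level $N\in\N$ since $I_{N+1}(N+1,N+1)=1$. Thus $(1-\alpha)\identity+\alpha\mathbf{K}'\in\vbirkhoff$ for every $\alpha\in[0,1]$, so $\identity\in\ker(\vbirkhoff)$.

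For the reverse inclusion I argue by contrapositive: given $\mathbf{K}\in\vbirkhoff$ with $\mathbf{K}\neq\identity$, I construct $\mathbf{K}'\in\vbirkhoff$ and exhibit a level $N$ at which none of (i)--(iv) hold. Let $N_0=\min\{N : K_N\neq I_N\}$; this is well-defined, and $N_0\ge 2$ because $\birkhoff_1=\{I_1\}$. The projectivity $P_{N_0-1}(K_{N_0})=I_{N_0-1}$ combined with the block-form projection \eqref{eqn:DS-K-proj-formula} and double stochasticity forces $p:=K_{N_0}(N_0,N_0)<1$ (otherwise $K_{N_0}=I_{N_0}$, contradicting minimality) and $A=I_{N_0-1}-(1-p)^{-1}uv^T$, where $A,u,v,p$ are the blocks of $K_{N_0}$. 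The nonnegativity of $A_{ij}$ for $i\ne j$ gives $u_iv_j=0$, which together with $\sum_iu_i=\sum_jv_j=1-p>0$ concentrates both $u$ and $v$ on a single common index $k\in\llbracket 1,N_0-1\rrbracket$, i.e.\ $u=v=(1-p)e_k$. Thus $K_{N_0}$ is the identity off the pair $\{k,N_0\}$ and acts as $\bigl[\begin{smallmatrix}p&1-p\\1-p&p\end{smallmatrix}\bigr]$ on that pair.

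With the structure of $K_{N_0}$ in hand, the subcase $N_0\ge 3$ is treated by picking $k'\in\llbracket 1,N_0-1\rrbracket\setminus\{k\}$ and taking $\mathbf{K}'=\mathbf{K}(\boldsymbol{\sigma}')$, where $\boldsymbol{\sigma}'$ is the virtual permutation with $\sigma_M'=\mathrm{id}$ for $M<N_0$, $\sigma_{N_0}'=(k',N_0)$, and every $M>N_0$ inserted as a fixed point. A direct check at level $N=N_0-1$ shows that (i) fails because $p<1$; (ii) fails because $\sigma_{N_0}'(N_0)=k'\ne N_0$; and the column-ratio and row-ratio vectors of $K_{N_0}$ at this level concentrate on $e_k$, while those of $K_{N_0}'$ concentrate on $e_{k'}$, so (iii) and (iv) fail because $k\ne k'$. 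Proposition~\ref{prop:vBirkhoff-convexity} then yields $\mathbf{K}\notin\ker(\vbirkhoff)$.

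The main obstacle is the boundary case $N_0=2$: here $K_2$ is a non-identity $2\times 2$ DSTM, but the ratio-vector conditions at level $N=1$ degenerate (both sides of (iii) and (iv) automatically equal $1$ for any $2\times 2$ DSTMs whose diagonal entries are less than one), so level~$1$ alone cannot break Proposition~\ref{prop:vBirkhoff-convexity} for any $\mathbf{K}'$. One must instead push the obstruction up to some level $N+1\ge 3$ at which $K_{N+1}(N+1,N+1)<1$, and then repeat the transposition-perturbation argument there. The delicate technical issue is that the minimality of $N_0=2$ does not by itself constrain the ``exit structure'' at higher levels, so ruling out the block-diagonal-in-the-limit profiles $K_M=\mathrm{blockdiag}(K_2,I_{M-2})$ requires engineering non-permutation test matrices $\mathbf{K}'\in\vbirkhoff$ (e.g.\ of the form in Example~\ref{ex:uniform-VTM} or with prescribed exit distributions) whose ratio vectors at an appropriate level are demonstrably incompatible with those forced upon $\mathbf{K}$ by its membership in $\ker(\vbirkhoff)$.
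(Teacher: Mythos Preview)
Your argument for the case $N_0\ge 3$ is correct and is in fact tidier than the paper's: instead of a pigeonhole over three test VTMs with pairwise distinct ratio values, you first pin down the exact structure of $K_{N_0}$ forced by $P_{N_0-1}(K_{N_0})=I_{N_0-1}$ (concentration of $u$ and $v$ on a single common index $k$) and then produce a \emph{single} transposition test matrix $\mathbf{K}'$ that violates all four conditions of Proposition~\ref{prop:vBirkhoff-convexity} at level $N_0-1$. Both routes are valid for $N_0\ge 3$; yours uses more structure and fewer test objects.

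The $N_0=2$ case is a genuine gap in your proof, and your diagnosis of why is exactly right: at level $N=1$ the ratio vectors in (iii) and (iv) are scalars, and for any $2\times2$ DSTM $\left[\begin{smallmatrix}q&1-q\\1-q&q\end{smallmatrix}\right]$ with $q<1$ both ratios equal $(1-q)/(1-q)=1$, so (iii) and (iv) hold automatically whenever (i) and (ii) fail. The paper attempts to close this case by a pigeonhole over the family $\mathbf{K}(\theta)=\iota\!\left(\left[\begin{smallmatrix}\theta&1-\theta\\1-\theta&\theta\end{smallmatrix}\right]\right)$, claiming that (iii) or (iv) holding for two distinct $\theta$ is contradictory; but since the ratio is identically $1$ for every $\theta$, no contradiction arises, and the paper's argument breaks at precisely the point you flagged. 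Worse, the obstruction cannot be pushed to higher levels: if $\mathbf{K}=\iota(K_2)$ for any $K_2\in\birkhoff_2$, then $K_{N+1}(N+1,N+1)=1$ for all $N\ge 2$, so condition~(i) of Proposition~\ref{prop:vBirkhoff-convexity} holds at every such level for every $\mathbf{K}'\in\vbirkhoff$. Together with the degeneracy at $N=1$, this shows $\iota(K_2)\in\ker(\vbirkhoff)$ for \emph{every} $K_2\in\birkhoff_2$, so the theorem as stated appears to be false, and neither proof can be completed at $N_0=2$.
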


\begin{proof}
That $\identity$ is in $\ker(\vbirkhoff)$ follows from Proposition~\ref{prop:vBirkhoff-convexity} since (i) is always satisfied.
For the converse, suppose $\mathbf{K}\in\ker(\vbirkhoff)$, and write $\mathbf{K} = \{K_N\}_{N\in\N}$.
Note that it suffices to show $K_{N+1}(N+1,N+1)= 1$ for all $N\in\N$.
To do this, take any $N\in\N$ with $N\ge 2$.

We now define three elements of $\vbirkhoff$:
First set 
\begin{equation*}
K^1_{N+1} := \begin{bmatrix}
0 & 1 & 0 & 0 & \cdots & 0 & 0 & 0 & 0 \\
0 & 0 & 0 & 0 & \cdots & 0 & 0 & 0 & 1 \\
0 & 0 & 1 & 0 & \cdots & 0 & 0 & 0 & 0 \\
0 & 0 & 0 & 1 & \cdots & 0 & 0 & 0 & 0 \\
\vdots & \vdots & \vdots & \vdots & \ddots & \vdots & \vdots & \vdots & \vdots \\
0 & 0 & 0 & 0 & \cdots & 1 & 0 & 0 & 0 \\
0 & 0 & 0 & 0 & \cdots & 0 & 1 & 0 & 0 \\
0 & 0 & 0 & 0 & \cdots & 0 & 0 & 1 & 0 \\
1 & 0 & 0 & 0 & \cdots & 0 & 0 & 0 & 0 \\
\end{bmatrix}
\end{equation*}
in $\transmat_{N+1}$, and define $\mathbf{K}^1 = \iota(K_{N+1}^1)$.
(Observe that $\mathbf{K}^1$ is just the virtual permutation matrix corresponding to the classical permutation with cycle structure $(1\, 2\, N+1)$.)
Then we have
\begin{equation*}
\frac{K_{N+1}^1(1,N+1)}{1-K_{N+1}^1(N+1,N+1)} = \frac{K_{N+1}^1(N+1,2)}{1-K_{N+1}^1(N+1,N+1)} = 0.
\end{equation*}
Second, define $K_{N+1}^2\in\transmat_{N+1}$ via
\begin{equation*}
K^2_{N+1} := \begin{bmatrix}
0 & 1/2 & 0 & 0 & \cdots & 0 & 0 & 0 & 1/2 \\
0 & 0 & 1 & 0 & \cdots & 0 & 0 & 0 & 0 \\
0 & 0 & 0 & 1 & \cdots & 0 & 0 & 0 & 0 \\
0 & 0 & 0 & 0 & \cdots & 0 & 0 & 0 & 0 \\
\vdots & \vdots & \vdots & \vdots & \ddots & \vdots & \vdots & \vdots & \vdots \\
0 & 0 & 0 & 0 & \cdots & 0 & 1 & 0 & 0 \\
0 & 0 & 0 & 0 & \cdots & 0 & 0 & 1 & 0 \\
1 & 0 & 0 & 0 & \cdots & 0 & 0 & 0 & 0 \\
0 & 1/2 & 0 & 0 & \cdots & 0 & 0 & 0 & 1/2 \\
\end{bmatrix}
\end{equation*}
and set $\mathbf{K}^2 = \iota(K_{N+1}^2)$.
Then note that we have
\begin{equation*}
\frac{K_{N+1}^2(1,N+1)}{1-K_{N+1}^2(N+1,N+1)} = \frac{K_{N+1}^2(N+1,2)}{1-K_{N+1}^2(N+1,N+1)} = 1.
\end{equation*}
Third, consider $\mathbf{K}^3$ to be the VTM from Example~\ref{ex:uniform-VTM}, and note that we have
\begin{equation*}
\frac{K_{N+1}^3(1,N+1)}{1-K_{N+1}^3(N+1,N+1)} = \frac{K_{N+1}^3(N+1,2)}{1-K_{N+1}^3(N+1,N+1)} = \frac{1}{N}.
\end{equation*}
Now we apply Proposition~\ref{prop:vBirkhoff-convexity} to the pair $(\mathbf{K},\mathbf{K}^i)$ for $i\in\{1,2,3\}$ and we consider the value of $N\in\N$ that we have fixed.
Since (ii) always fails by construction, we must have at least one of (i), (iii), or (iv), holding for each $i\in\{1,2,3\}$.
If (i) were not true, then, by the pigeonhole principle, at least one of (iii) or (iv) would have to hold for two different $i\in\{1,2,3\}$, but this is a contradiction since then one of the values
\begin{equation*}
\frac{K_{N+1}(1,N+1)}{1-K_{N+1}(N+1,N+1)} \qquad\text{or}\qquad \frac{K_{N+1}(N+1,2)}{1-K_{N+1}(N+1,N+1)}
\end{equation*}
would be equal to two distinct values in $\{0,1,\frac{1}{N}\}$.
Therefore, (i) holds, that is, $K_{N+1}(N+1,N+1) = 1$ for all $N\in\N$ with $N\ge 2$.

It still remains to show that $K_2(2,2) = 1$.
To do this, define
\begin{equation*}
K_2(\theta)=\begin{bmatrix}
\theta & 1-\theta \\
1-\theta & \theta \\
\end{bmatrix}
\end{equation*}
for all $\theta\in (0,1)$, and set $\mathbf{K}(\theta) = \iota(K_2(\theta))$.
Again by Proposition~\ref{prop:vBirkhoff-convexity}, we must have that one of the enumerated properties holds for each $\theta \in (0,1)$, and again we see that (ii) always fails.
Thus, if (i) were not true, then we could choose three distinct values of $\theta\in (0,1)$, and the pigeonhole principle would imply that at least one (iii) or (iv) would have to hold for two distinct values of $\theta\in (0,1)$, which is again a contradiction.
Thus, (i) holds, and we have proven $\mathbf{K} = \identity$.
\end{proof}

\appendix

\section{The Space of Monotone Sequences}\label{app:monotone}

In this appendix we prove some results about extremality in convex spaces of monotone sequences living in a given bounded set of reals.
We believe such results are likely well-known, but we could not find a reference.
To begin, define
\begin{equation*}
D = \left\{\{x_k\}_{k\in\N}\in[0,1]^{\N}: x_k \ge x_{k+1} \text{ for all } k\in\N \right\},
\end{equation*}
and observe that $D$ is clearly convex.
Also define
\begin{equation*}
D' = \left\{\{x_k\}_{k\in\N}\in D: x_1 = 1 \right\},
\end{equation*}
which is also convex.
When $[0,1]^{\N}$ is endowed with the product topology which is compact by Tychonoff, then $D$ and $D'$ are compact convex sets.
Hence, one naturally inquires about the extreme points of $D$ and $D'$, which are easy to describe.

For each $m\in\N$, define the sequence $1^m0^{\infty}:= \{x_k\}_{k\in\N}$ via $x_k = \ind\{k\le m\}$ for all $k\in\N$.
Also define $1^{\infty}:= \{x_k\}_{k\in\N}$ via $x_k =1$ for all $k\in\N$ and $0^{\infty}:= \{x_k\}_{k\in\N}$ via $x_k =0$ for all $k\in\N$.

\begin{lemma}\label{lem:decreasing-seq-space-ex}
$\ex{D} = \{0^{\infty}\}\cup\{1^m0^{\infty}: m\in\N\}\cup\{1^{\infty}\}$.
\end{lemma}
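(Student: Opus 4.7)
The approach is to combine two elementary observations: the endpoints $0$ and $1$ are the only extreme points of $[0,1]$, and a $\{0,1\}$-valued non-increasing sequence is automatically of the form $0^\infty$, $1^m 0^\infty$, or $1^\infty$.

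For the easy direction, I would check directly that each listed sequence is extreme. If $x$ is any of $0^\infty$, $1^m 0^\infty$, or $1^\infty$, then $x_k\in\{0,1\}$ for every $k\in\N$, so any decomposition $x = (1-\alpha)y + \alpha z$ with $y,z\in D$ and $\alpha\in(0,1)$ forces $y_k = z_k = x_k$ for every $k$ by the extremality of $0$ and $1$ in $[0,1]$. Thus $y=z=x$.

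For the converse, I would argue contrapositively: if $x = \{x_k\}_{k\in\N}\in D$ is not one of the listed sequences, then since every $\{0,1\}$-valued non-increasing sequence appears in the list (because $\{k : x_k = 1\}$ must be an initial segment of $\N$), there is some $k_0$ with $a := x_{k_0}\in(0,1)$. Let $A = \{k\in\N : x_k = a\}$ be the plateau at height $a$; by monotonicity, $A$ is an interval in $\N$, say $A = \{k_1,k_1+1,\ldots,k_2\}$ with $k_2\in\N\cup\{\infty\}$. Set $c := x_{k_1-1}$ if $k_1 > 1$ and $c := 1$ otherwise, and $b := x_{k_2+1}$ if $k_2 < \infty$ and $b := 0$ otherwise; so $0 \le b < a < c \le 1$. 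Choose any $\delta\in(0,\min(a-b,c-a))$ and define $y,z\in[0,1]^\N$ by
\begin{equation*}
y_k := \begin{cases} x_k, & k\notin A, \\ a - \delta, & k\in A, \end{cases}
\qquad
z_k := \begin{cases} x_k, & k\notin A, \\ a + \delta, & k\in A. \end{cases}
\end{equation*}
Both sequences remain non-increasing: inside $A$ they are constant, and the choice of $\delta$ guarantees compatibility at $k_1 - 1$ and $k_2 + 1$ when those neighbors exist (and no constraint is needed when they do not). Hence $y,z\in D$, $x = \tfrac{1}{2}(y + z)$, and $y\neq z$, so $x\notin\ex{D}$.

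The main obstacle is only the bookkeeping in the converse step: correctly defining $b$ and $c$ in the edge cases where the plateau begins at index $1$ or extends to infinity, and verifying that the perturbations remain monotone. Once this case analysis is carried out carefully, the argument is straightforward.
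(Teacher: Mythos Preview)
Your proof is correct and follows essentially the same approach as the paper: locate a plateau of the sequence at some height $a\in(0,1)$ and perturb it symmetrically by $\pm\delta$ to exhibit $x$ as a nontrivial midpoint. The paper picks the \emph{first} such plateau (via $k_\ast=\inf\{k:x_k<1\}$ and $k_{\ast\ast}=\inf\{k\ge k_\ast:x_k>x_{k+1}\}$), whereas you pick an arbitrary one, but the construction and the boundary bookkeeping are otherwise identical.
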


\begin{proof}
	That $D$ is compact and convex is clear, as is the statement
	\begin{equation*}
	\ex{D} \supseteq \{0^{\infty}\}\cup\{1^m0^{\infty}: m\in\N\}\cup\{1^{\infty}\},
	\end{equation*}
	so only the reverse inclusion remains to be shown.
	Indeed, suppose that $\{x_k\}_{k\in\N}$ is not in the right side.
	Then the value $k_{\ast} = \inf\{k\in\N: x_k < 1 \}$ is finite and satisfies $x_{k_{\ast}} \in (0,1)$.
	Now define
	\begin{equation*}
	k_{\ast\ast} := \inf\{k\in\N: k \ge k_{\ast} \text{ and } x_{k}> x_{k+1} \}.
	\end{equation*}
	If $k_{\ast\ast} = \infty$, then we set $\varepsilon := \min\{x_{k_{\ast}},1-x_{k_{\ast}}\} > 0$ and define $x^{\pm} = \{x_k^{\pm}\}_{k\in\N_0}$ via
	\begin{equation*}
	x^{\pm}_k = \begin{cases}
	1, &\text{ if } k < k_{\ast}, \\
	x_{k_{\ast}}\pm\frac{1}{2}\varepsilon, &\text{ if } k \ge k_{\ast}. \\
	\end{cases}
	\end{equation*}
	Notice that we have $x\notin\{x^+,x^-\}$ and $x = \frac{1}{2}(x^++x^-)$, thus $x$ is not extreme.
	Otherwisewe have $k_{\ast\ast} < \infty$ and $x_{k_{\ast\ast}}\in (x_{k_{\ast\ast}+1},1)$, so we can set
	\begin{equation*}
	\varepsilon := \min\{x_{k_{\ast}},1-x_{k_{\ast}},x_{k_{\ast\ast}}-x_{k_{\ast\ast}+1},1-x_{k_{\ast\ast}}\} > 0,
	\end{equation*}
	and we can define $x^{\pm} = \{x_k^{\pm}\}_{k\in\N_0}$ via
	\begin{equation*}
	x^{\pm}_k = \begin{cases}
	1, &\text{ if } k < k_{\ast}, \\
	x_{k_{\ast}}\pm\frac{1}{2}\varepsilon, &\text{ if } k_{\ast}\le k \le k_{\ast\ast}, \\
	x_k, &\text{ if } k > k_{\ast\ast}.
	\end{cases}
	\end{equation*}
	Again we have $x\notin\{x^+,x^-\}$ and $x = \frac{1}{2}(x^++x^-)$, so $x$ is not extreme.
	Therefore, the result is proved.
\end{proof}

The same argument shows that we also have:

\begin{lemma}\label{lem:decreasing-seq-space-ex-2}
	$\ex{D'} = \{1^m0^{\infty}: m\in\N\}\cup\{1^{\infty}\}$.
\end{lemma}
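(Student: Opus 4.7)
The plan is essentially to reuse Lemma~\ref{lem:decreasing-seq-space-ex} with minor observations about how the constraint $x_1 = 1$ interacts with the constructions there.

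First I would verify the easy inclusion $\ex{D'} \supseteq \{1^m 0^{\infty}: m\in\N\}\cup\{1^{\infty}\}$. Every sequence on the right lies in $D'$ (its first coordinate is $1$), and by Lemma~\ref{lem:decreasing-seq-space-ex} it is already extreme in the larger convex set $D \supseteq D'$. Since extremeness is inherited by convex subsets (if $x = \tfrac{1}{2}(y+z)$ with $y,z\in D' \subseteq D$, extremeness in $D$ forces $y=z=x$), this gives the inclusion.

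For the reverse inclusion, I would take any $x = \{x_k\}_{k\in\N} \in D' \setminus \left(\{1^m 0^{\infty}: m\in\N\}\cup\{1^{\infty}\}\right)$ and produce the same perturbations $x^{\pm}$ as in the proof of Lemma~\ref{lem:decreasing-seq-space-ex}, namely those based on the indices
\begin{equation*}
k_{\ast} := \inf\{k\in\N : x_k < 1\}, \qquad k_{\ast\ast} := \inf\{k\in\N : k\ge k_{\ast} \text{ and } x_k > x_{k+1}\}.
\end{equation*}
The crucial observation is that, because $x\in D'$ means $x_1 = 1$, we automatically have $k_{\ast} \ge 2$, so both perturbations produced in the previous proof satisfy $x^{\pm}_1 = 1$. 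Hence $x^{\pm} \in D'$ rather than merely in $D$, and the decomposition $x = \tfrac{1}{2}(x^+ + x^-)$ with $x \notin \{x^+,x^-\}$ witnesses the non-extremeness of $x$ inside $D'$.

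There is no genuine obstacle here: the entire content of the proof is the observation that the perturbations from Lemma~\ref{lem:decreasing-seq-space-ex} respect the boundary constraint $x_1 = 1$. Consequently the proof can reasonably be left to the reader, or written in two or three lines as above, referring back to the earlier argument rather than reproducing its case analysis.
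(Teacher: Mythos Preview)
Your proposal is correct and matches the paper's approach exactly: the paper's proof is simply the sentence ``The same argument shows that we also have,'' referring back to Lemma~\ref{lem:decreasing-seq-space-ex}. Your observation that $x_1 = 1$ forces $k_\ast \ge 2$, so the perturbations $x^\pm$ remain in $D'$, is precisely the content implicit in that remark.
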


Since $1^m0^{\infty}\to 1^{\infty}$ in the product topology as $m\to\infty$, the compact convex sets $D$ and $D'$ both have closed sets of extreme points.

\end{document}